\documentclass[12pt,a4paper]{amsart}
\usepackage{amssymb}
\headheight=8pt     \topmargin=0pt
\textheight=624pt   \textwidth=432pt
\oddsidemargin=18pt \evensidemargin=18pt
\newtheorem{theorem}{Theorem}
\newtheorem{proposition}[theorem]{Proposition}
\newtheorem{corollary}[theorem]{Corollary}
\newtheorem{lemma}[theorem]{Lemma}
\numberwithin{theorem}{section}
\theoremstyle{remark}
\newtheorem*{example}{Example}
\newtheorem*{remark}{Remark}
\newtheorem*{definition}{Definition}
\numberwithin{equation}{section}
\newcommand{\ffrac}[2]{{\mbox{\large$\frac{#1}{#2}$}}}
\newcommand{\im}{{\mathrm{im}}\,}
\newcommand{\rank}{{\mathrm{rank}}\,}
\newcommand{\End}{{\mathrm{End}}}
\newcommand{\Hom}{{\mathrm{Hom}}}
\newcommand{\YY}{\raisebox{-1.2pt}{\Large$\oslash$}}
\newcommand{\intprod}{\;\rule{5pt}{.3pt}\rule{.3pt}{7pt}\;}
\raggedbottom
\begin{document}
\title{Prolongation on Contact Manifolds}
\author{Michael Eastwood}
\address{Mathematical Sciences Institute, Australian National University,
\newline\indent ACT 0200, Australia}
\email{meastwoo@member.ams.org}
\author{A.~Rod Gover}
\address{Department of Mathematics, University of Auckland,
Private Bag 92019,\newline\indent Auckland, New Zealand}
\email{r.gover@auckland.ac.nz}
\thanks{This research was supported by the Australian Research Council, the
Royal Society of New Zealand (Marsden Grant 06-UOA-029), and the New Zealand
Institute of Mathematics and its Applications. The first author also thanks
the University of Auckland for hospitality during the initiation of this work
and during its continuation as a Maclaurin Fellow.}
\subjclass{Primary 53D10; Secondary 35N05.}
\keywords{Prolongation, Partial differential equation, Contact manifold.}
\copyrightinfo{2004}{American Mathematical Society}
\begin{abstract}
On contact manifolds we describe a notion of (contact) finite-type for linear
partial differential operators satisfying a natural condition on their leading
terms. A large class of linear differential operators are of finite-type in
this sense but are not well understood by currently available techniques. We
resolve this in the following sense. For any such $D$ we construct a partial
connection $\nabla_H$ on a (finite rank) vector bundle with the property that
sections in the null space of $D $ correspond bijectively, and via an explicit
map, with sections parallel for the partial connection. It follows that the
solution space of $D$ is finite dimensional and bounded by the corank of the
holonomy algebra of $\nabla_H$. The treatment is via a uniform procedure, even
though in most cases no normal Cartan connection is available.
\end{abstract}
\renewcommand{\subjclassname}{\textup{2000} Mathematics Subject Classification}
\maketitle

\section{Introduction} 

The prolongations of a $k^{\rm th}$ order linear differential operator between
vector bundles arise by differentiating the given operator $D:E\to F$, and
forming a new system comprising $D$ along with auxiliary operators that capture
some of this derived data. To exploit this effectively it is crucial to
determine what part of this information should be retained, and then how
best to manage it. With this understood, for many classes of operators the
resulting prolonged operator can expose key properties of the original
differential operator and its equation. Motivated by questions related to
integrability and deformations of structure, a theory of overdetermined
equations and prolonged systems was developed during the 1950s and 1960s by
Goldschmidt, Spencer, and others \cite{bcggg,spencer}. Generally, results in
these works are derived abstractly using jet bundle theory, and are severely
restricted in the sense that they apply most readily to differential operators
satisfying involutivity conditions. These features mean the theory can be
difficult to apply.

In the case that the given partial differential operator $D:E\to F$, has
surjective symbol there is an effective algorithmic approach to this problem.
The prolongations are constructed from the leading symbol
$\sigma(D):\bigodot^k\!\Lambda^1 \otimes E\to F$, where $\bigodot^k\!\Lambda^1$
is the bundle of symmetric covariant tensors on $M$ of rank~$k$. At a point of
$M$, denoting by $K$ the kernel of $\sigma(D)$, the spaces
$K^\ell=(\bigodot^{\ell}\!\Lambda^1\otimes K)\cap 
(\bigodot^{k+\ell}\!\Lambda^1 \otimes E)$, $\ell \geq 0$, capture spaces of new
variables to be introduced, and the system closes up if $K^\ell=0$ for
sufficiently large $\ell$. In this case the operator $D$ is said to be of 
{\em finite-type} (following \cite{spencer}). The equation is {\em regular\/}
if the spaces $K^\ell$ have constant rank over the manifold. The leading symbol
determines whether or not an equation is of finite-type and/or regular. 
If it is both, then the final prolonged system is a linear connection on 
$\bigoplus_{\ell=0}^{k-1}(\bigodot^k\!\Lambda^1\otimes E)
\oplus\bigoplus_{\ell=0}^\infty K^\ell$ with the property that its covariant 
constant sections are in $1$--$1$ correspondence with solutions of $D\sigma=0$.
In general, prolonged systems are complicated. In \cite{bceg} Kostant's
algebraic Hodge theory \cite{kostant} led to an explicit and uniform treatment
of prolongations for a large class of overdetermined partial differential
equations (in fact, semilinear equations are also treated in~\cite{bceg}).

On a connected manifold, a solution of a finite-type differential
operator is evidently determined by its finite jet at any point, that
is by a finite part of its Taylor series data. However on contact
manifolds a large class of differential operators that have the latter
property nevertheless fail to be of finite-type, in the sense
above. For example even the operation of taking
the differential of a function in contact
directions is not of finite-type. This signals that the general
prolongation theory is not adequate. If the underlying manifold has a
structure from the class of parabolic geometries \cite[\S4.2]{thebook}
(e.g.\ hypersurface type CR geometry) then, for a special class of
natural operators, the methods of the Bernstein-Gelfand-Gelfand
machinery \cite{cd,cssannals} may be applied. However, these methods are not
applicable in general.

Drawing on Tanaka's notion of a filtered manifold, Morimoto
initiated a programme for studying differential equations on contact manifolds
and their generalisations \cite{morimoto} via a notion of weighted jet bundles
that are adapted to the structure. This provides a formal framework for
treating these structures and, in particular, leads to a notion of weighted
finite-type. For example, using this notion of weighted jets,
Neusser~\cite{neusser} has recently and usefully adapted to the filtered
manifold setting, some tools of Goldschmidt~\cite{goldschmidt} sufficient to
show quite easily that the solution space of a weighted finite-type system is
finite-dimensional.

Despite this progress a significant gap remains. Ideally we would have a
uniform approach that, when applied to any specific equation from the class,
yields an explicit prolonged system from which obstructions to solution can be
calculated directly. In this article we provide a solution to this problem. In
particular we develop a new prolongation theory for contact structures which,
on the one hand, maintains a transparent and useable link with the weighted jet
picture of \cite{morimoto,neusser}, and which on the other hand is effective
and practically applicable. The main result is as follows. Corresponding to
weighted jets, on a contact manifold there is a notion of contact symbol. For
(suitably regular) partial differential operators $D:E\to F$ with surjective
contact symbol we describe an explicit iterative scheme for treating the
contact prolongation problem. The operator is said to be of {\em (contact)
finite-type\/} if the prolongations stabilise after a finite number of steps,
and in this case we obtain a partial connection on the prolonged system with
the property that its parallel sections correspond $1$--$1$ and explicitly to
solutions of $D$. This partial connection canonically promotes to a connection
on the same bundle. It follows that the dimension of the solution space for $D$
is bounded by the rank of the bundle supporting this partial connection and the
existence of solutions is equivalent to a rank reducing holonomy reduction of
the connection in the obvious way. Since the connection is constructed
concretely it is possible directly to use this to construct explicit curvature
obstructions to solutions of the $D$ equation.

For first order operators, our main result may be stated as follows. Let $H$
denote the contact distribution and $\Lambda_H^1$ its dual. There is a
canonical surjection $\Lambda^1\to\Lambda_H^1$. A first order differential
operator $E\to F$ is said to be {\em compatible\/} with the contact structure
if and only if its symbol $\Lambda^1\otimes E\to F$ factors through
$\Lambda^1\otimes E\to \Lambda_H^1\otimes E$. It means that the operator $D$
only differentiates in the contact directions. In this case the resulting
homomorphism $\Lambda_H^1\otimes E\to F$ is called the {\em partial symbol\/}
of~$D$. We shall suppose that it is surjective and write
$K_H\subseteq\Lambda_H^1\otimes E$ for its kernel. There are canonical
subbundles of $S_\perp^\ell\subseteq\bigotimes^\ell\!\Lambda_H^1$ defined via
the {\em Levi form\/}, as follows. In terms of a locally chosen contact
form~$\phi$, the Levi form may be regarded as $d\phi|_H$ and, from this point
is view, is well-defined up to scale. Adopting Penrose's {\em abstract index
notation\/}~\cite{OT} for sections of $H$ and its associated bundles, let us
write $L_{ab}$ for the Levi form. Then, it is clear that
$S_\perp^\ell\subseteq\bigotimes^\ell\!\Lambda_H^1$ defined as
\begin{equation}\label{intrinsicdefinitionofSperp}
S_\perp^\ell\equiv\left\{
\raisebox{7pt}{$X_{\underbrace{\scriptstyle abcde\cdots f}_{
\makebox[0pt]{\footnotesize$\ell$ indices}}}$}
\mbox{\enskip s.t.\ }\begin{array}{rcccl}
X_{[ab]cde\cdots f}&\!\!\!=\!\!\!&X_{c[ab]de\cdots f}
&\!\!\!=\!\!\!&X_{cd[ab]e\cdots f}\\
&\!\!\!=\!\!\!&\cdots&\!\!\!=\!\!\!&L_{ab}Y_{cde\cdots f}\end{array}
\mbox{ for some }Y_{cde\cdots f}\right\}\end{equation}
does not see the scale of $L_{ab}$ (enclosing a pair of
indices in square brackets means to take the skew part in those indices).
Certainly, $S_\perp^\ell\supseteq\bigodot^\ell\!\Lambda_H^1$ but, in fact, is
strictly bigger~(\ref{bundlesum}) for $\ell\geq 2$. Now we define
\begin{equation}\label{KHlfirstorder}
K_H^\ell\equiv(S_\perp^\ell\otimes K_H)\cap(S_\perp^{\ell+1}\otimes E),
\quad\mbox{for }\ell\geq 0.\end{equation}
\begin{theorem}\label{completefirstordertheorem}
Suppose that $K_H^\ell$ are vector bundles for all $\ell$ and that $K_H^\ell=0$
for $\ell$ sufficiently large. Then there is a connection on the bundle
$\textstyle{\mathbb{T}}\equiv E\oplus\bigoplus_{\ell\geq 0}\!K_H^\ell$ so that
the projection ${\mathbb{T}}\to E$ induces an isomorphism between the 
covariant constant sections of\/ ${\mathbb{T}}$ and the solution space 
$\{\sigma\in\Gamma(E)\mbox{\rm\ s.t.\ }D\sigma=0\}$.
\end{theorem}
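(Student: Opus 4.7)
The plan follows the classical prolongation paradigm of iteratively introducing new variables via an auxiliary partial connection, but with symmetric powers systematically replaced by $S_\perp^\ell$ so that the Levi form accommodates the non-commutativity of contact derivatives. First, fix any linear connection on $E$, restrict to $H$ to obtain a partial connection $\nabla_a$, and extend it canonically to associated bundles. Writing $D\sigma = \rho(\nabla_a\sigma) + A\sigma$ with $\rho:\Lambda_H^1\otimes E\to F$ the surjective partial symbol and $A:E\to F$ the zeroth-order part, choose a bundle splitting $\Phi:F\to\Lambda_H^1\otimes E$ of $\rho$. The equation $D\sigma=0$ is then equivalent to $\sigma^{(0)}:=\nabla_a\sigma + \Phi(A\sigma)\in\Gamma(K_H^0)$, which begins the tower.

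Now proceed inductively. Suppose we have produced $\sigma^{(j)}\in\Gamma(K_H^j)$ for $j<\ell$ together with defining relations $\nabla_a\sigma^{(j-1)} = \sigma^{(j)} + \Psi_j$, where $\Psi_j$ is a bundle map expression in $\sigma,\sigma^{(0)},\ldots,\sigma^{(j-1)}$. Apply $\nabla_b$ to the $(\ell{-}1)$-level relation. Two facts combine. First, this places $\nabla_b\sigma^{(\ell-1)}$ in $\Lambda_H^1\otimes K_H^{\ell-1}$. Second, the contact commutator identity $[\nabla_a,\nabla_b]=L_{ab}\nabla_T+(\text{curvature})$, which is built into the very definition~(\ref{intrinsicdefinitionofSperp}) of $S_\perp^{\ell+1}$, implies that the skew parts of $\nabla_b\sigma^{(\ell-1)}$ are forced to equal $L_{ab}$ times a tensor determined by the existing tower. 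Subtracting this forced correction $\Psi_\ell$ leaves $\sigma^{(\ell)} := \nabla_b\sigma^{(\ell-1)}-\Psi_\ell$, which lies in $(S_\perp^\ell\otimes K_H)\cap(S_\perp^{\ell+1}\otimes E) = K_H^\ell$, extending the tower by one step.

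The main obstacle is the symbol lemma underlying the preceding paragraph: at each stage one must identify $\Lambda_H^1\otimes K_H^{\ell-1}$ with $K_H^\ell$ plus a complement whose elements can be written canonically in terms of lower-order data. This is the algebraic heart of the argument, requiring one to track how $L_{ab}$ interacts with the Spencer-style filtration, and how the transverse derivative $\nabla_T$, though not directly available as an operator, is \emph{implicitly} encoded through commutators into the skew parts prescribed by $S_\perp^{\ell+1}$. The finite-type hypothesis $K_H^\ell=0$ for $\ell\geq N$ then truncates the tower: $\nabla_a\sigma^{(N-1)}$ becomes wholly expressible in the prior variables, closing the system as first-order equations on the finite-rank bundle $\mathbb{T}$.

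The collected relations $\nabla_a\sigma^{(j-1)} - \sigma^{(j)} - \Psi_j=0$ (together with the terminal relation) define a partial connection $\nabla_H^{\mathbb{T}}$ on $\mathbb{T}$ whose parallel sections project by the $E$-component to exactly the solutions of $D\sigma=0$, with the inverse isomorphism given by iterated prolongation. To promote $\nabla_H^{\mathbb{T}}$ to a full linear connection on $\mathbb{T}$, invoke the contact commutator once more: for parallel sections the non-degeneracy of $L_{ab}$ allows one to solve $L_{ab}\nabla_T\sigma^{(j)} = [\nabla_a,\nabla_b]\sigma^{(j)} - (\text{curvature})$ uniquely for $\nabla_T\sigma^{(j)}$ as a canonical expression in the tower. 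Declaring these forced identities to be the transverse component of the connection produces a full connection on $\mathbb{T}$ with the stated isomorphism property.
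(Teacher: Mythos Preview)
Your overall strategy coincides with the paper's: iteratively introduce variables $\sigma^{(\ell)}\in K_H^\ell$ by differentiating in contact directions, close the system when $K_H^\ell=0$, and promote the resulting partial connection on ${\mathbb{T}}$ to a full connection (the paper does this last step via Proposition~\ref{modcons}, which is exactly your final paragraph). However, the mechanism you describe for the key ``symbol lemma'' is inverted, and you omit a case that the paper treats at length.

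Your claim that the commutator identity forces the skew parts of $\nabla_b\sigma^{(\ell-1)}$ to equal $L_{ab}$ times a tensor \emph{determined by the existing tower} is backwards. What the commutator actually determines is the \emph{trace-free} skew component, i.e.\ the image in $\Lambda_{H\perp}^2\otimes(\cdots)$, via the Rumin-type curvature identity $\nabla_H\mu=\kappa_H\sigma$ of~(\ref{DtildeH}); the $L_{ab}$-component of the skew part is precisely the \emph{new} degree of freedom that enlarges $\bigodot^{\ell+1}\!\Lambda_H^1$ to $S_\perp^{\ell+1}$ and gets absorbed into~$\sigma^{(\ell)}$, not subtracted from it. The correction $\Psi_\ell$ must therefore be a lift of the $\Lambda_{H\perp}^2$-valued curvature term through a chosen splitting $\delta_H$ of $\partial_H(\Lambda_H^1\otimes K_H^{\ell-1})\hookrightarrow\Lambda_{H\perp}^2\otimes(\cdots)$, and it is the kernel of $\partial_H$---not of the Levi trace---that equals~$K_H^\ell$. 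The paper packages this as passing from $\widetilde{D}_H$ to a surjective-symbol operator $D_H'$ (Theorems~\ref{firstcontacttheorem} and~\ref{secondcounterpart}) and then iterating.

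You also need to treat the $3$-dimensional case separately. There $\Lambda_{H\perp}^2=0$, so the first-order curvature constraint is vacuous, $K_H^1=\Lambda_H^1\otimes K_H$ trivially, and the integrability condition that cuts down to $K_H^2=(S_\perp^2\otimes K_H)\cap(S_\perp^3\otimes E)$ arises only from the \emph{second-order} Rumin operator $\nabla_H^{(2)}$ of~(\ref{composition}) and its enhanced symbol (Proposition~\ref{calculateenhancedsymbol}). Your scheme of applying $\nabla_b$ once per step and extracting a first-order constraint does not produce this; the paper reorganises the iteration in~\S\ref{three} (and ultimately Proposition~\ref{iterum}) precisely to handle it.
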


Following a simplified treatment of the general prolongation theory for first
order operators in Section~\ref{general},
Theorem~\ref{completefirstordertheorem} is proved in Section~\ref{contact}
(cf.~Theorem~\ref{finalfirstordercontacttheorem}). Then, following a simplified
treatment of the general prolongation theory for higher order operators in
Section~\ref{highergeneral}, Theorem~\ref{completefirstordertheorem} is
generalised to higher order operators on contact manifolds in
Section~\ref{highercontact}. The construction is reasonably straightforward in
dimensions $2n+1$ for $n\geq 2$. Theorem~\ref{highercontacttheorem} is used to
replace the given operator with an equivalent contact compatible first order
prolonged system. It is used to construct a first order contact compatible
differential operator with surjective contact symbol, at which point we are
able to apply an iterative procedure developed for first order operators in
proving Theorem~\ref{finalfirstordercontacttheorem}. For $3$-dimensional
contact structures, however, one expects rather different phenomena to
occur~\cite{pansu,rumin}, and this is indeed the case. Nevertheless,
Proposition~\ref{iterum} provides a more general iterative scheme, and finally
the main result takes the same form in all dimensions. This is Theorem
\ref{finalcontacttheorem}. For these theorems to be useful, of course, one
needs to compute spaces of the form~(\ref{KHlfirstorder}) (and more
generally~(\ref{KHl})). Although this is, in principle, a simple matter of
multilinear algebra, in practise these spaces are difficult to identify. In
particular, it would be useful to know some {\em a priori\/} bounds on their
dimension so that the dimension of the solution space
$\{\sigma\in\Gamma(E)\mbox{ s.t.\ }D\sigma=0\}$ can thereby be bounded. For a
large class of geometrically arising linear differential operators on contact
manifolds, all this is possible and Section~\ref{geometric} is devoted to the
computation of the spaces (\ref{KHlfirstorder}) and (\ref{KHl}) for these
operators. It reduces to the computation of certain Lie algebra cohomologies for
the Heisenberg algebra. This cohomology is, in turn, already known as a special
case of Kostant's algebraic Bott-Borel-Weil Theorem~\cite{kostant} and the
resulting bounds on the dimension of the solution space are sharp.

\section{General prolongation for first order operators}\label{general}
Suppose $D:E\to F$ is a first order linear differential operator and suppose
that its symbol $\Lambda^1\otimes E\to F$ is surjective. Write $\pi$ for this
symbol and $K$ for its kernel. Define the vector bundle $E^\prime$ as the
kernel of $D:J^1E\to F$. We obtain a commutative diagram
\begin{equation}\label{commdiag}\begin{array}{ccccccccc}
&&0&&0\\
&&\downarrow&&\downarrow\\
0&\to&K&\to&E^\prime&\to&E&\to&0\\
&&\downarrow&&\downarrow&&\|\\
0&\to&\Lambda^1\otimes E&\to&J^1E&\to&E&\to&0\\
&&\makebox[0pt]{\scriptsize$\pi\,$}\downarrow\makebox[0pt]{}&&
\makebox[0pt]{\scriptsize$D\,$}\downarrow\makebox[0pt]{}\\
&&F&=&F\\
&&\downarrow&&\downarrow\\
&&0&&0
\end{array}\end{equation}
with exact rows and columns.
\begin{lemma}\label{choosesplitting}
We can find a connection $\nabla$ on $E$ so that $D$ is the composition
\begin{equation}\label{compo}
E\xrightarrow{\,\nabla\,}\Lambda^1\otimes E\xrightarrow{\,\pi\,}F.
\end{equation}
\end{lemma}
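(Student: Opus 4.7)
The plan is to build $\nabla$ by correcting an arbitrary initial connection by a suitable endomorphism-valued $1$-form, using the surjectivity of~$\pi$.

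First I would choose \emph{any} linear connection $\nabla_0$ on $E$; such a connection exists by a standard partition-of-unity argument (equivalently, the bottom short exact sequence in the diagram (\ref{commdiag}) splits in the category of vector bundles, and a splitting $E\to J^1E$ is exactly the datum of a connection). The composition $\pi\circ\nabla_0\colon E\to\Lambda^1\otimes E\to F$ is then a first order linear differential operator whose leading symbol is~$\pi$, the same as the symbol of~$D$. Hence the difference
\begin{equation*}
\Phi\equiv D-\pi\circ\nabla_0\colon E\to F
\end{equation*}
is a zeroth order operator, i.e.\ a genuine bundle homomorphism.

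Second, since $\pi\colon\Lambda^1\otimes E\to F$ is surjective by hypothesis, the short exact sequence $0\to K\to\Lambda^1\otimes E\to F\to 0$ admits a (smooth) bundle splitting $s\colon F\to\Lambda^1\otimes E$ with $\pi\circ s=\mathrm{id}_F$. Setting $\Psi\equiv s\circ\Phi\colon E\to\Lambda^1\otimes E$ we obtain a bundle homomorphism satisfying $\pi\circ\Psi=\Phi$. Define
\begin{equation*}
\nabla\equiv\nabla_0+\Psi.
\end{equation*}
Since connections form an affine space modelled on sections of $\Lambda^1\otimes\End E$, this $\nabla$ is again a linear connection on~$E$, and
\begin{equation*}
\pi\circ\nabla=\pi\circ\nabla_0+\pi\circ\Psi=(D-\Phi)+\Phi=D,
\end{equation*}
which is exactly (\ref{compo}).

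There is no serious obstacle here: the only inputs are the existence of some connection on $E$ and the existence of a splitting of a surjection of vector bundles, both of which are standard. The conceptual content is simply that any first order operator with surjective symbol differs from $\pi\circ\nabla_0$ by a tensorial term that can be absorbed into the connection, because $\pi$ is surjective.
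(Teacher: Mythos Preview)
Your proof is correct. The argument---start with an arbitrary connection, observe that $D-\pi\circ\nabla_0$ is a bundle homomorphism because the symbols agree, and then lift it through the surjection~$\pi$---is a perfectly good way to produce the desired~$\nabla$.

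The paper takes a slightly different route. Rather than correcting an arbitrary connection, it observes directly from the diagram~(\ref{commdiag}) that any splitting of the top row $0\to K\to E^\prime\to E\to 0$ yields, via the inclusion $E^\prime\hookrightarrow J^1E$, a splitting of the jet sequence $0\to\Lambda^1\otimes E\to J^1E\to E\to 0$, i.e.\ a connection. Because the image of this splitting lands in $E^\prime=\ker(D\colon J^1E\to F)$, one gets $\pi\circ\nabla=D$ automatically. The paper also records the converse: connections with this property correspond \emph{precisely} to splittings of~(\ref{KTE}). That bijection is not incidental---the paper immediately fixes such a splitting and refers back to it when building the prolonged operator~$\widetilde{D}$. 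Your approach gives existence just as cleanly, but the paper's phrasing makes the parametrisation of all such connections transparent, which is what it actually uses going forward.
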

\begin{proof}
{From} diagram~(\ref{commdiag}), a splitting of
\begin{equation}\label{KTE}
0\to K\to E^\prime\to E\to 0\end{equation}
gives rise to a splitting of
$$0\to\Lambda^1\otimes E\to J^1E\to E\to 0.$$
Interpreted as a connection on~$E$, it has the required property. In fact, the
connections with this property correspond precisely to splittings
of~(\ref{KTE}).
\end{proof}
Let us fix a splitting of~(\ref{KTE}) and therefore a connection on $E$ in
accordance with Lemma~\ref{choosesplitting}. Having done this, the following
theorem and its proof describe the crucial step in classical prolongation.
\begin{theorem}\label{firsttheorem}There is a first order differential
operator
$$\widetilde{D}:
E^\prime=\begin{array}cE\\ \oplus\\ K\end{array}\longrightarrow
\begin{array}c\Lambda^1\otimes E\\ \oplus\\ \Lambda^2\otimes E\end{array}$$
so that the canonical projection $E^\prime\to E$ induces an isomorphism
\begin{equation}\label{basicisomorphism}
\{\Sigma\in\Gamma(E^\prime)\mbox{\rm\ s.t.\ }\widetilde{D}\Sigma=0\}\cong
\{\sigma\in\Gamma(E)\mbox{\rm\ s.t.\ }D\sigma=0\}.\end{equation}
\end{theorem}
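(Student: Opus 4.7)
The plan is as follows. Lemma~\ref{choosesplitting} identifies $E^\prime\cong E\oplus K$ and furnishes a connection $\nabla$ on $E$ with $D=\pi\circ\nabla$. A section of $E^\prime$ thus becomes a pair $(\sigma,\mu)$ with $\sigma\in\Gamma(E)$ and $\mu\in\Gamma(K)\subseteq\Gamma(\Lambda^1\otimes E)$. I would design $\widetilde{D}$ so that its first component forces $\mu=\nabla\sigma$ (which both determines $\mu$ from $\sigma$ and, since $\mu$ takes values in $K=\ker\pi$, forces $D\sigma=0$), while its second component records the curvature identity that $\nabla^2\sigma$ satisfies automatically.

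Explicitly, writing $R\in\Gamma(\Lambda^2\otimes\End(E))$ for the curvature of $\nabla$ and $d^\nabla:\Lambda^1\otimes E\to\Lambda^2\otimes E$ for the coupled exterior derivative, I would set
$$\widetilde{D}(\sigma,\mu)\;:=\;\bigl(\nabla\sigma-\mu,\;\;d^\nabla\mu-R\cdot\sigma\bigr).$$
The first entry is manifestly a section of $\Lambda^1\otimes E$ (using the inclusion $K\hookrightarrow\Lambda^1\otimes E$) and the second a section of $\Lambda^2\otimes E$; both are first order in the pair $(\sigma,\mu)$, so $\widetilde{D}$ is a first order differential operator as required.

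The verification of (\ref{basicisomorphism}) then splits into two halves. Given $\sigma$ with $D\sigma=0$, the identity $\pi(\nabla\sigma)=D\sigma=0$ shows that $\nabla\sigma\in\Gamma(K)$, so $\Sigma:=(\sigma,\nabla\sigma)$ indeed lies in $\Gamma(E^\prime)$; the first component of $\widetilde{D}\Sigma$ is zero by construction, while the second reduces to $d^\nabla(\nabla\sigma)-R\cdot\sigma=0$ by the very definition of curvature. Conversely, if $\widetilde{D}(\sigma,\mu)=0$ then the first equation gives $\mu=\nabla\sigma$ in $\Lambda^1\otimes E$, which forces $\nabla\sigma\in K$ (hence $D\sigma=0$) and recovers $\Sigma$ uniquely from~$\sigma$. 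Thus the canonical projection $E^\prime\to E$ realises the claimed bijection.

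There is no substantial obstacle here once Lemma~\ref{choosesplitting} is in hand: the whole construction is essentially forced by the commutative diagram~(\ref{commdiag}) together with the compatibility $D=\pi\circ\nabla$. The only mild conceptual point is that the second slot of $\widetilde{D}$ imposes no new restriction at this stage---it vanishes automatically as soon as the first does---but its presence is precisely what will drive the iteration when the prolongation is carried to higher order in the sections that follow.
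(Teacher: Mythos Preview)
Your proof is correct and is essentially identical to the paper's own proof: the paper defines $\widetilde{D}(\sigma,\mu)=(\nabla\sigma-\mu,\nabla\mu-\kappa\sigma)$ with $\kappa$ the curvature and $\nabla$ the coupled exterior derivative, and verifies the bijection exactly as you do. Your closing remark that the second component is an automatic consequence of the first and serves only to set up the iteration is also precisely the paper's observation.
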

\begin{proof}
Define $\widetilde{D}$ by
\begin{equation}\label{defofDtilde}
\left[\begin{array}c\sigma\\ \mu\end{array}\right]
\stackrel{\widetilde{D}}{\longmapsto}
\left[\begin{array}c\nabla\sigma-\mu\\
\nabla\mu-\kappa\sigma\end{array}\right],
\end{equation}
where $\nabla$ acting on $\mu$ denotes the differential operator
$\Lambda^1\otimes E\to\Lambda^2\otimes E$ induced by the connection
$\nabla:E\to\Lambda^1\otimes E$ and
$\kappa:E\to\Lambda^2\otimes E$ denotes the curvature of~$\nabla$. {From}
(\ref{compo}) it is clear that $D\sigma=0$ if and only if
$$\nabla\sigma=\mu\quad\mbox{for some }\mu\in\Gamma(K).$$
Having thus rewritten $D\sigma=0$, applying the differential operator
$\nabla:\Lambda^1\otimes E\to\Lambda^2\otimes E$ to both sides of this equation
implies that $\nabla\mu=\kappa\sigma$. In other words, this component of
$\widetilde{D}\Sigma$ is an optional extra arising as an obvious compatibility
requirement.
\end{proof}
\begin{remark}
Actually, there is no need to choose a connection in order to
define~$\widetilde{D}$. Following Goldschmidt~\cite[Proposition~3]{goldschmidt},
the target bundle can be invariantly defined as $J^1J^1E/J^2E$ and
$\widetilde{D}$ may then be obtained by restricting the tautological first order
differential operator $J^1E\to J^1J^1E/J^2E$ to $E^\prime\subseteq J^1E$. The
main reason for choosing $\nabla$ is that it makes prolongation into an
effective and computable procedure.
\end{remark}
Maintaining our chosen splitting of (\ref{KTE}) and induced connection, it is
evident that the symbol of $\widetilde{D}$ is
\begin{equation}\label{symbolofDtilde}
\begin{array}c\Lambda^1\otimes E\\ \oplus\\ \Lambda^1\otimes K\end{array}
\raisebox{-10pt}{$\xrightarrow{\mbox{\scriptsize
$\left[\begin{array}{cc}{\mathrm{Id}}&0\\ 0&\partial\end{array}\right]$}}$}
\begin{array}c\Lambda^1\otimes E\\ \oplus\\ \Lambda^2\otimes E\end{array},
\end{equation}
where $\partial$ is the composition
$$\Lambda^1\otimes K\hookrightarrow\Lambda^1\otimes\Lambda^1\otimes E
\xrightarrow{\,\wedge\,\otimes\,{\mathrm{Id}}\;}\Lambda^2\otimes E.$$
Let us suppose that $\partial$ has constant rank, write $F^\prime$ for the
subbundle
$$\begin{array}c\Lambda^1\otimes E\\ \oplus\\ \partial(\Lambda^1\otimes K)
\end{array}\subseteq
\begin{array}c\Lambda^1\otimes E\\ \oplus\\ \Lambda^2\otimes E\end{array},$$
and define $D^\prime:E^\prime\to F^\prime$ by
$$\left[\begin{array}c\sigma\\ \mu\end{array}\right]
\stackrel{D^\prime}{\longmapsto}
\left[\begin{array}c\nabla\sigma-\mu\\
\delta(\nabla\mu-\kappa\sigma)\end{array}\right],$$
where $\delta$ is an arbitrary splitting of
$\partial(\Lambda^1\otimes K)\hookrightarrow\Lambda^2\otimes E$, equivalently
an arbitrary choice of complementary bundle.
\begin{theorem}\label{secondtheorem}The canonical projection $E^\prime\to E$
induces an isomorphism
$$\{\Sigma\in\Gamma(E^\prime)\mbox{\rm\ s.t.\ }D^\prime\Sigma=0\}\cong
\{\sigma\in\Gamma(E)\mbox{\rm\ s.t.\ }D\sigma=0\}.$$
\end{theorem}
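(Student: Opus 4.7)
The plan is to reduce Theorem~\ref{secondtheorem} to Theorem~\ref{firsttheorem} by showing that $D'\Sigma=0$ and $\widetilde{D}\Sigma=0$ cut out the same subset of $\Gamma(E^\prime)$. Once this is established, the required isomorphism and its inverse are precisely those furnished by Theorem~\ref{firsttheorem}, applied unchanged.

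One inclusion is immediate from the construction: $D'$ shares its first component with $\widetilde{D}$, and its second component is obtained by composing that of $\widetilde{D}$ with the projection $\delta$, so $\widetilde{D}\Sigma=0$ trivially implies $D'\Sigma=0$. For the reverse direction, I would start with $D'\Sigma=0$ and $\Sigma=(\sigma,\mu)\in\Gamma(E^\prime)$. The vanishing of the first component gives $\mu=\nabla\sigma$. The key observation is that the \emph{full} second component of $\widetilde{D}\Sigma$ then vanishes automatically, since substituting $\mu=\nabla\sigma$ yields
$$\nabla\mu-\kappa\sigma=\nabla\nabla\sigma-\kappa\sigma=0,$$
which is nothing but the defining identity for the curvature $\kappa$ of the connection $\nabla$ (with signs matched to those in~(\ref{defofDtilde})). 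Thus $\widetilde{D}\Sigma=0$, and the two solution sets agree.

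Conceptually, Theorem~\ref{secondtheorem} is saying that the second equation in $\widetilde{D}\Sigma=0$ is already a compatibility consequence of the first, so truncating the target via $\delta$ to the smaller bundle $F^\prime$ loses no information about \emph{solutions}; the passage from $\widetilde{D}$ to $D'$ is an efficiency measure that trims the symbol in~(\ref{symbolofDtilde}) enough to serve as the starting point for iterated prolongation. I do not foresee any substantive obstacle here: the entire verification boils down to the curvature identity $\nabla\nabla=\kappa$ together with a trivial repackaging of Theorem~\ref{firsttheorem}. Neither the specific choice of splitting $\delta$ nor the constant-rank hypothesis on $\partial$ enters the argument at this stage; those assumptions are needed only so that $F^\prime$ makes sense as a vector bundle and so that $D'$ is well-posed as a differential operator between bundles of constant rank.
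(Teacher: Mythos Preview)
Your proposal is correct and matches the paper's approach: the paper's proof simply says to follow the same reasoning as Theorem~\ref{firsttheorem}, noting that $\delta(\nabla\mu-\kappa\sigma)$ records only part of the optional differential consequences of $\nabla\sigma=\mu$. Your argument makes this explicit by showing $D'\Sigma=0\iff\widetilde{D}\Sigma=0$ via the curvature identity $\nabla\nabla\sigma=\kappa\sigma$, which is exactly the content of the paper's one-line proof.
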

\begin{proof}
We follow exactly the same reasoning as for Theorem~\ref{firsttheorem}. The
only difference is that the $\delta(\nabla\mu-\kappa\sigma)$ records only some
part of the optional first order differential consequences of the equation
$\nabla\sigma=\mu$.
\end{proof}
\begin{remark}
Although the bundle $F^\prime$ is canonically defined just from
$\pi:\Lambda^1\otimes E\to F$, the construction of $D^\prime$ does involve a
choice of splitting~$\delta$. In practise, there is often a natural choice for
$\delta$ but, from the point of view adopted in this article, the main reason
for introducing $D^\prime$ is that its symbol is surjective by construction.
\end{remark}
From (\ref{symbolofDtilde}), the kernel of the symbol of $D^\prime$ is
precisely $\ker\partial\subseteq\Lambda^1\otimes K$. Equivalently, it is the
intersection
$$\textstyle K^\prime
\equiv(\Lambda^1\otimes K)\cap(\bigodot^2\!\Lambda^1\otimes E)$$
inside $\Lambda^1\otimes\Lambda^1\otimes E$. If $K^\prime$ is trivial,
then $D^\prime$ is a connection. If not, we can iterate this procedure, at
the next stage identifying
\begin{equation}\label{Kprimeprime}
\textstyle K^{\prime\prime}\equiv
(\bigodot^2\!\Lambda^1\otimes K)\cap(\bigodot^3\!\Lambda^1\otimes E)
\end{equation}
as the kernel of the symbol of $D^{\prime\prime}$. The details are left to the 
reader. Eventually, if
$$\textstyle (\bigodot^\ell\!\Lambda^1\otimes K)\cap
(\bigodot^{\ell+1}\!\Lambda^1\otimes E)=0\quad\mbox{for }\ell\mbox{
sufficiently large},$$
then $D$ is said to be of {\em finite-type\/} in the sense of
Spencer~\cite{spencer} and we have constructed a vector bundle with connection
whose covariant constant sections are in one-to-one correspondence with the
solutions of $D\sigma=0$.

\section{Contact prolongation for first order operators}\label{contact}
Let us firstly establish some notation. We shall denote
the contact distribution by~$H$ and its annihilator line-bundle
$H^\perp\hookrightarrow\Lambda^1$ by~$L$. We have a short exact sequence
$$0\to L\to\Lambda^1\to\Lambda_H^1\to 0,$$
which determines the contact structure. The de~Rham sequence begins
\begin{equation}\label{deRham}\begin{array}{ccccccccc}
&&&&0&&0\\
&&&&\downarrow&&\downarrow\\
&&&&L&&\makebox[0pt]{$\Lambda_H^1\otimes L$}\\
&&&&\downarrow&&\downarrow\\
0&\to&\Lambda^0&\xrightarrow{\,d\,}
     &\Lambda^1&\xrightarrow{\,d\,}
     &\Lambda^2&\xrightarrow{\,d\,}&\cdots\\
&&&&\downarrow&&\downarrow\\
&&&&\Lambda_H^1&&\Lambda_H^2\\
&&&&\downarrow&&\downarrow\\
&&&&0&&0
\end{array}\end{equation}
where $\Lambda_H^2$ denotes $\Lambda^2(\Lambda_H^1)$ and the columns are exact.
Let us denote by~${\mathcal{L}}$, the composition
$$L\to\Lambda^1\xrightarrow{\,d\,}\Lambda^2\to\Lambda_H^2.$$
It is a homomorphism of vector bundles. We shall refer to it as the {\em Levi
form\/} and the contact condition implies that ${\mathcal{L}}$ is injective.
For an arbitrary vector bundle $E$ with connection $\nabla$ is it easily
verified that the composition
\begin{equation}\label{coupledcomposition}
L\otimes E\to\Lambda^1\otimes E\xrightarrow{\,\nabla\,}
\Lambda^2\otimes E\to\Lambda_H^2\otimes E\end{equation}
is simply ${\mathcal{L}}\otimes{\mathrm{Id}}$. Let us denote by $d_H$ the
composition $\Lambda^0\xrightarrow{\,d\,}\Lambda^1\to\Lambda_H^1$ and, 
following Pansu~\cite{pansu},
say that a differential operator $\nabla_H:E\to\Lambda_H^1\otimes E$ is a
{\em partial connection\/} if and only if
\begin{equation}\label{leibniz}
\nabla_H(f\sigma)=f\nabla_H\sigma+d_H\!f\otimes\sigma\quad
\mbox{for any smooth function $f$ and }\sigma\in\Gamma(E).\end{equation}
If $\nabla$ is a connection on~$E$, then the composition
$E\stackrel{\nabla}{\longrightarrow}\Lambda^1\otimes E\to\Lambda_H^1\otimes E$
is a partial connection.

The operator $d_H:\Lambda^0\to \Lambda_H^1$ on a contact manifold is the
natural replacement for the exterior derivative $d:\Lambda^0\to\Lambda^1$, the
point being that, although $d_H$ sees only the contact directions, these
operators have the same kernel. With reference to the diagram (\ref{deRham}),
if $d_Hf=0$ then $df$ is actually a section of $L$. But then $d^2=0$ implies
that ${\mathcal{L}}df=0$ and then $df=0$ because ${\mathcal{L}}$ is supposed to
be injective. There is also a replacement for $d:\Lambda^1\to\Lambda^2$,
defined as follows. Again with reference to diagram~(\ref{deRham}), for
$\omega\in\Gamma(\Lambda_H^1)$, lift to $\tilde\omega\in\Gamma(\Lambda^1)$ and
project $d\tilde\omega\in\Gamma(\Lambda^2)$ into $\Gamma(\Lambda_H^2)$. Of
course, this is ill-defined owing to the choice of lift but the freedom
so entailed is precisely in the image of ${\mathcal{L}}$ in $\Lambda_H^2$.
Thus, we obtain a well-defined first order differential operator
$$d_H:\Lambda_H^1\to\Lambda_{H\perp}^2\equiv\Lambda_H^2/{\mathcal{L}}(L).$$
Furthermore, in dimension $5$ or more a little diagram chasing in
(\ref{deRham}) and injectivity of
${\mathrm{Id}}\wedge{\mathcal{L}}:\Lambda_H^1\otimes L\to\Lambda_H^3$ shows
that
\begin{equation}\label{rumin}
0\to{\mathbb{R}}\to\Lambda^0\xrightarrow{\,d_H\,}\Lambda_H^1
\xrightarrow{\,d_H\,}\Lambda_{H\perp}^2\end{equation}
is locally exact just as the de~Rham sequence is. It is the first part
of the Rumin complex~\cite{rumin}. In dimension $3$, however, the Levi form
${\mathcal{L}}:L\to\Lambda_H^2$ is an isomorphism so (\ref{rumin}) breaks down.
For the remainder of this section we shall suppose that our contact manifold
has dimension at least~$5$, postponing the $3$-dimensional case
until~\S\ref{three}.

The arguments in dimension $5$ or more closely follow the general procedure
outlined in~\S\ref{general}. Suppose $D:E\to F$ is a first order linear
differential operator and that $D$ only differentiates in the contact
directions. Precisely, we shall suppose that $D$ is {\em compatible\/} with the
contact structure meaning that its symbol factors as
\begin{equation}\label{piH}
\Lambda^1\otimes E\to\Lambda_H^1\otimes E\xrightarrow{\,\pi_H\,}F\end{equation}
and, in this case, refer to $\pi_H$ as the {\em partial\/} symbol of~$D$. As
in the general case, we shall suppose that $\pi_H$ is surjective and write 
$K_H$ for its kernel. Factoring (\ref{commdiag}) by
$$\begin{array}{ccccccccc} &&0&&0\\
&&\downarrow&&\downarrow\\
0&\to&L\otimes E&\to&L\otimes E&\to&0&\to&0\\
&&\downarrow&&\downarrow&&\|\\
0&\to&L\otimes E&\to&L\otimes E&\to&0&\to&0\\
&&\downarrow&&\downarrow\\
&&0&=&0\\
&&\downarrow&&\downarrow\\
&&0&&0
\end{array}$$
we obtain the commutative diagram
\begin{equation}\label{diagJH}\begin{array}{ccccccccc}
&&0&&0\\
&&\downarrow&&\downarrow\\
0&\to&K_H&\to&E_H^\prime&\to&E&\to&0\\
&&\downarrow&&\downarrow&&\|\\
0&\to&\Lambda_H^1\otimes E&\to&J_H^1E&\to&E&\to&0\\
&&\makebox[0pt]{\scriptsize$\pi_H\;\;$}\downarrow\makebox[0pt]{}&&
\makebox[0pt]{\scriptsize$D\,$}\downarrow\makebox[0pt]{}\\
&&F&=&F\\
&&\downarrow&&\downarrow\\
&&0&&0
\end{array}\end{equation}
with exact rows and columns and, in particular, hereby define $E_H^\prime$. A
splitting of
$$0\to K_H\to E_H^\prime\to E\to 0$$
gives rise to a partial connection $\nabla_H$ such that $D=\pi_H\circ\nabla_H$.
Any partial connection $\nabla_H:E\to\Lambda_H^1\otimes E$ gives rise to an
operator
$$\nabla_H:\Lambda_H^1\otimes E\to\Lambda_{H\perp}^2\otimes E\enskip
\mbox{characterised by }\nabla_H(\omega\otimes\sigma)=
d_H\omega\otimes\sigma-\omega\wedge\nabla_H\sigma\bmod{\mathcal{L}}(L),$$
mimicking the case of ordinary connections. {From} the partial Leibniz
rule~(\ref{leibniz}), it follows that the composition
$$E\xrightarrow{\,\nabla_H\,}\Lambda_H^1\otimes E\xrightarrow{\,\nabla_H\,}
\Lambda_{H\perp}^2\otimes E$$
is a homomorphism of vector bundles, which we shall denote by~$\kappa_H$ 
(being the natural {\em curvature} of a partial connection~\cite{pansu}).
Parallel to Theorem~\ref{firsttheorem} we have:--
\begin{theorem}\label{firstcontacttheorem}
There is a first order differential operator
$$\widetilde{D}_H:
E_H^\prime=\begin{array}cE\\ \oplus\\ K_H\end{array}\longrightarrow
\begin{array}c\Lambda_H^1\otimes E\\ \oplus\\
\Lambda_{H\perp}^2\otimes E\end{array}$$
so that the canonical projection $E_H^\prime\to E$ induces an isomorphism
$$\{\Sigma\in\Gamma(E_H^\prime)\mbox{\rm\ s.t.\ }\widetilde{D}_H\Sigma=0\}\cong
\{\sigma\in\Gamma(E)\mbox{\rm\ s.t.\ }D\sigma=0\}.$$
\end{theorem}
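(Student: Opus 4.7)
The plan is to follow the template of Theorem~\ref{firsttheorem} verbatim, substituting the partial connection $\nabla_H$ for the full connection $\nabla$ and the coupled partial operator $\nabla_H:\Lambda_H^1\otimes E\to\Lambda_{H\perp}^2\otimes E$ for the ordinary coupled exterior derivative. Concretely, I would first fix a splitting of the short exact sequence $0\to K_H\to E_H^\prime\to E\to 0$ from diagram~(\ref{diagJH}); as noted just after that diagram, this is equivalent to a choice of partial connection $\nabla_H$ on $E$ satisfying $D=\pi_H\circ\nabla_H$. In parallel with~(\ref{defofDtilde}), I would then define
$$\widetilde{D}_H\left[\begin{array}{c}\sigma\\ \mu\end{array}\right]
=\left[\begin{array}{c}\nabla_H\sigma-\mu\\ \nabla_H\mu-\kappa_H\sigma\end{array}\right],$$
where the $\nabla_H$ in the second row is the coupled operator into $\Lambda_{H\perp}^2\otimes E$ introduced just above the theorem, and $\kappa_H=\nabla_H\circ\nabla_H$ is the partial curvature.

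For the bijection, I would argue exactly as in the proof of Theorem~\ref{firsttheorem}. Since $D=\pi_H\circ\nabla_H$ and $K_H=\ker\pi_H$, the equation $D\sigma=0$ is equivalent to the existence of $\mu\in\Gamma(K_H)$ with $\nabla_H\sigma=\mu$, and such a $\mu$ is uniquely determined by $\sigma$. Applying the coupled $\nabla_H$ to both sides gives $\nabla_H\mu=\nabla_H^2\sigma=\kappa_H\sigma$ in $\Lambda_{H\perp}^2\otimes E$, so the second component of $\widetilde{D}_H(\sigma,\mu)$ is an automatic differential consequence, precisely parallel to the role of $\nabla\mu-\kappa\sigma$ in the proof of Theorem~\ref{firsttheorem}. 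Conversely, if $\widetilde{D}_H(\sigma,\mu)=0$ then the first row forces $\nabla_H\sigma=\mu\in\Gamma(K_H)$ and hence $D\sigma=\pi_H(\mu)=0$. Thus $\sigma\mapsto(\sigma,\nabla_H\sigma)$ is the inverse, on respective solution spaces, to the projection $E_H^\prime\to E$.

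The main subtlety, already absorbed into the setup preceding the theorem, is why we must land in $\Lambda_{H\perp}^2\otimes E$ rather than $\Lambda_H^2\otimes E$. The putative formula $\nabla_H(\omega\otimes\sigma)=d_H\omega\otimes\sigma-\omega\wedge\nabla_H\sigma$ is $C^\infty$-linear in $\omega$ only after quotienting by $\mathcal{L}(L)$, the obstruction being encoded in~(\ref{coupledcomposition}): for any ordinary connection extending a given $\nabla_H$, the restriction of $\nabla^2$ to $L\otimes E$ is $\mathcal{L}\otimes{\mathrm{Id}}$, and this contribution must be killed to obtain an operator depending only on the partial connection. The same argument shows that $\kappa_H$ is a well-defined bundle map $E\to\Lambda_{H\perp}^2\otimes E$. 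Once these tensoriality facts are in place the remainder is a line-by-line translation of the proof of Theorem~\ref{firsttheorem} with the replacements $d\leadsto d_H$, $\Lambda^2\leadsto\Lambda_{H\perp}^2$, $\nabla\leadsto\nabla_H$, and $\kappa\leadsto\kappa_H$, so I do not anticipate further difficulty.
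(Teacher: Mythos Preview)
Your proposal is correct and matches the paper's approach exactly: the paper defines $\widetilde{D}_H$ by the same formula you give and then simply says ``argue as before,'' referring to the proof of Theorem~\ref{firsttheorem}. Your additional paragraph on why the target must be $\Lambda_{H\perp}^2\otimes E$ recapitulates material the paper has already established in the setup preceding the theorem, so it is accurate but not needed for the proof itself.
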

\begin{proof}
Define $\widetilde{D}_H$ by
\begin{equation}\label{DtildeH}
\left[\begin{array}c\sigma\\ \mu\end{array}\right]
\stackrel{\widetilde{D}_H}{\longmapsto}
\left[\begin{array}c\nabla_H\sigma-\mu\\
\nabla_H\mu-\kappa_H\sigma\end{array}\right]\end{equation}
and argue as before.
\end{proof}
Notice that $\widetilde{D}_H$ is again compatible with the contact structure.
Indeed, the symbol of $\widetilde{D}_H$ factors through
\begin{equation}\label{symbolDtilde}
\begin{array}c\Lambda_H^1\otimes E\\ \oplus\\
\Lambda_H^1\otimes K_H\end{array}
\raisebox{-10pt}{$\xrightarrow{\mbox{\scriptsize
$\left[\begin{array}{cc}{\mathrm{Id}}&0\\ 0&\partial_H\end{array}\right]$}}$}
\begin{array}c\Lambda_H^1\otimes E\\ \oplus\\
\Lambda_{H\perp}^2\otimes E\end{array},\end{equation}
where $\partial_H$ is the composition
$$\Lambda_H^1\otimes K_H\hookrightarrow\Lambda_H^1\otimes\Lambda_H^1\otimes E
\xrightarrow{\,\wedge\,\otimes\,{\mathrm{Id}}\;}\Lambda_H^2\otimes E
\to\Lambda_{H\perp}^2\otimes E,$$
which we shall suppose to be of constant rank. Again shadowing the general
case, let us write $F_H^\prime$ for the
subbundle
$$\begin{array}c\Lambda_H^1\otimes E\\ \oplus\\
\partial_H(\Lambda^1\otimes K_H)
\end{array}\subseteq
\begin{array}c\Lambda_H^1\otimes E\\ \oplus\\
\Lambda_{H\perp}^2\otimes E\end{array},$$
choose a splitting $\delta_H$ of
$\partial_H(\Lambda^1\otimes K_H)\hookrightarrow\Lambda_{H\perp}^2\otimes E$,
and define $D_H^\prime:E_H^\prime\to F_H^\prime$ by
$$\left[\begin{array}c\sigma\\ \mu\end{array}\right]
\stackrel{D_H^\prime}{\longmapsto}
\left[\begin{array}c\nabla_H\sigma-\mu\\
\delta_H(\nabla_H\mu-\kappa_H\sigma)\end{array}\right].$$
The counterpart to Theorem~\ref{secondtheorem} follows immediately:--
\begin{theorem}\label{secondcounterpart}
The canonical projection $E_H^\prime\to E$ induces an isomorphism
$$\{\Sigma\in\Gamma(E_H^\prime)\mbox{\rm\ s.t.\ }D_H^\prime\Sigma=0\}\cong
\{\sigma\in\Gamma(E)\mbox{\rm\ s.t.\ }D\sigma=0\}.$$
\end{theorem}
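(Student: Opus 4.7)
The plan is to mirror the proof of Theorem~\ref{secondtheorem} in the contact setting, using Theorem~\ref{firstcontacttheorem} as a template. The key observation I will exploit is that $D_H^\prime$ differs from $\widetilde{D}_H$ only in that its second component applies the splitting $\delta_H$ to $\nabla_H\mu-\kappa_H\sigma$. Since $\delta_H$ is linear, any $\Sigma$ with $\widetilde{D}_H\Sigma=0$ certainly satisfies $D_H^\prime\Sigma=0$. Combined with Theorem~\ref{firstcontacttheorem}, this yields an injection from $\{\sigma\in\Gamma(E)\mbox{ s.t.\ }D\sigma=0\}$ into the solution space of $D_H^\prime$, realised explicitly by $\sigma\mapsto(\sigma,\nabla_H\sigma)$.

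For the reverse direction, I would take $\Sigma=(\sigma,\mu)\in\Gamma(E_H^\prime)$ with $D_H^\prime\Sigma=0$. The first component forces $\nabla_H\sigma=\mu$. By construction of $E_H^\prime$ from diagram~(\ref{diagJH}), and the identification $E_H^\prime\cong E\oplus K_H$ determined by the chosen splitting, the $\mu$-component is automatically a section of $K_H$, so $\pi_H\mu=0$. Consequently
\[D\sigma=\pi_H(\nabla_H\sigma)=\pi_H(\mu)=0,\]
so $\sigma$ solves $D\sigma=0$, and $\Sigma$ is the image of $\sigma$ under $\sigma\mapsto(\sigma,\nabla_H\sigma)$. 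Thus the projection $\Sigma\mapsto\sigma$ is inverse to this assignment on solution spaces, and the claimed bijection holds.

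It is worth remarking, in the write-up, that the second component of $D_H^\prime$ is not actually needed to recover solutions of $D\sigma=0$: the bundle-level constraint $\mu\in K_H$ together with $\nabla_H\sigma=\mu$ already delivers $D\sigma=0$. The purpose of retaining $\delta_H(\nabla_H\mu-\kappa_H\sigma)$ is the one indicated in the remark following~(\ref{symbolDtilde}), namely to produce an operator whose partial symbol is surjective by construction, which is what makes possible the iterative step leading toward Theorem~\ref{completefirstordertheorem}.

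There is no substantive obstacle here; the argument reduces to a short verification once Theorem~\ref{firstcontacttheorem} is in hand. The only point that really demands care is the correct interpretation of $E_H^\prime$ as $E\oplus K_H$ via the partial connection $\nabla_H$, so that the $\mu$-component of any section of $E_H^\prime$ is genuinely a section of $K_H$ and is therefore annihilated by~$\pi_H$; once this is set straight, both inclusions of solution spaces are essentially immediate.
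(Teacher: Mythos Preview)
Your proposal is correct and follows essentially the same approach as the paper: the paper's proof is the one-line remark that the result is immediate from Theorem~\ref{firstcontacttheorem}, the point being (as in Theorem~\ref{secondtheorem}) that the second component of $\widetilde{D}_H$ records only optional differential consequences of $\nabla_H\sigma=\mu$, so discarding part of it via $\delta_H$ does not affect the solution space. Your write-up simply unpacks this in slightly more detail, including the explicit observation that the second component plays no role in the bijection itself.
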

The operator $D_H^\prime$ is compatible with the contact structure and, by
design, has surjective symbol. Thus, we are in a position to iterate this
construction. We begin by observing from (\ref{symbolDtilde}) that the kernel
of the partial symbol of $D_H^\prime$ is
$$\ker\partial_H:\Lambda_H^1\otimes K_H\to\Lambda_{H\perp}^2\otimes E,$$
which may be viewed as the intersection
$$K_H^\prime\equiv
(\Lambda_H^1\otimes K_H)\cap(S_\perp^2\otimes E)\quad\mbox{inside }
\Lambda_H^1\otimes\Lambda_H^1\otimes E$$
where
\begin{equation}\label{Sperptwo}\textstyle
S_\perp^2=\bigodot^2\!\Lambda_H^1\oplus{\mathcal{L}}(L)=
\Big\{\phi_{ab}\in\bigotimes^2\!\Lambda_H^1\mbox{\rm\ s.t.\ }
\begin{array}l\phi_{ab}=P_{ab}+QL_{ab}\\
\mbox{\rm where }P_{ab}=P_{(ab)}\end{array}\Big\},\end{equation}
where $L_{ab}$ is (a representative of) the Levi form. That we are confined to
5 or more dimensions also shows up algebraically as follows. Let us write
$2n+1$ for the dimension of our contact manifold.
\begin{lemma}\label{Sperpthree}
If $n\geq 2$, then
$$\textstyle(\Lambda_H^1\otimes S_\perp^2)\cap(S_\perp^2\otimes\Lambda_H^1)=
\Big\{\phi_{abc}\in\bigotimes^3\!\Lambda_H^1\mbox{\rm\ s.t.\ }
\begin{array}l\phi_{abc}=P_{abc}+Q_aL_{bc}+Q_bL_{ac}+Q_cL_{ab}\\
              \mbox{\rm where }P_{abc}=P_{(abc)}\end{array}\Big\}.$$
\end{lemma}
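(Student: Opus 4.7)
The plan is to establish the two inclusions separately. The direction $\supseteq$ is immediate: for $\phi_{abc} = P_{(abc)} + Q_a L_{bc} + Q_b L_{ac} + Q_c L_{ab}$ a direct computation of the skew parts, using that $P_{(abc)}$ is totally symmetric and $L_{ab}$ is itself skew, yields $\phi_{a[bc]} = Q_a L_{bc}$ and $\phi_{[ab]c} = Q_c L_{ab}$; hence $\phi$ lies in both $\Lambda_H^1\otimes S_\perp^2$ and $S_\perp^2\otimes\Lambda_H^1$ by (\ref{Sperptwo}).

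For the reverse containment, suppose $\phi$ lies in the intersection. By (\ref{Sperptwo}) applied to each factorisation there exist sections $R_a$ and $T_c$ of $\Lambda_H^1$ with
\[
\phi_{a[bc]} = R_a L_{bc}
\qquad\mbox{and}\qquad
\phi_{[ab]c} = T_c L_{ab}.
\]
The crux is to show $R_a = T_a$. I plan to compute the alternating cyclic sum $\phi_{abc}+\phi_{bca}+\phi_{cab}-\phi_{acb}-\phi_{bac}-\phi_{cba}$ in two ways: cycling the first identity through the three cyclic permutations of $(a,b,c)$ and summing produces $2(R_aL_{bc}+R_bL_{ca}+R_cL_{ab})$, while cycling the second identity in parallel fashion produces $2(T_aL_{bc}+T_bL_{ca}+T_cL_{ab})$ with the same left-hand side. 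Writing $S_a := R_a - T_a$ and equating gives the tensor identity $S_aL_{bc}+S_bL_{ca}+S_cL_{ab}=0$.

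The main obstacle, and the sole point at which the hypothesis $n\ge 2$ enters, is to extract $S_a=0$ from this identity. I will contract with the inverse Levi form $L^{bc}$, which is well-defined because $\mathcal{L}$ is injective. Using $L_{bc}L^{bc}=-2n$ together with the partial traces $L_{ca}L^{bc}=\delta^b_a$ and $L_{ab}L^{bc}=\delta^c_a$, the three terms collapse to $(2-2n)S_a$, forcing $S_a=0$ as soon as $n\neq 1$. In the excluded $3$-dimensional case $n=1$, the tensor $S_aL_{bc}+S_bL_{ca}+S_cL_{ab}$ vanishes identically by the Schouten identity in the $2$-dimensional $H$, so $R$ and $T$ need not agree, which is consistent with the breakdown already signalled near (\ref{rumin}) and deferred to~\S\ref{three}.

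With $R=T$ in hand, set $Q_a$ to be their common value and define $\psi_{abc} := \phi_{abc} - Q_a L_{bc} - Q_b L_{ac} - Q_c L_{ab}$. A short verification, using $Q=R$ for the first skew part and $Q=T$ for the second, gives $\psi_{a[bc]}=0$ and $\psi_{[ab]c}=0$; hence $\psi$ is symmetric in its last two and in its first two indices, and therefore totally symmetric. Taking $P_{abc}:=\psi_{abc}$ exhibits $\phi_{abc}$ in the form claimed. Apart from the Levi-form contraction, the remaining steps are routine multilinear bookkeeping.
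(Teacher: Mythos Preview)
Your argument is correct and reaches the same conclusion, but by a genuinely different route from the paper's proof.

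The paper proceeds via the irreducible decomposition of $\Lambda_H^1\otimes S_\perp^2$ under ${\mathrm{Sp}}(2n,{\mathbb{R}})$: it writes a general element as $P_{abc}+R_{abc}+T_bL_{ac}+T_cL_{ab}+Q_aL_{bc}$ with $P$ totally symmetric and $R$ a trace-free hook, then computes the projection to $\Lambda_{H\perp}^2\otimes\Lambda_H^1$ and transvects with $L^{bc}$ to obtain the factor $\frac{(2n+1)(n-1)}{n}(Q_a-T_a)$, forcing $Q=T$ and then $R=0$. Your proof avoids the representation-theoretic decomposition altogether: you extract the two ``Levi components'' $R_a$ and $T_c$ directly from the defining skew parts, identify them via the total antisymmetrisation $6\phi_{[abc]}$ computed two ways, and then peel off the symmetric core. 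This is more elementary and makes the role of $n\geq 2$ especially transparent, since the obstruction $S_aL_{bc}+S_bL_{ca}+S_cL_{ab}$ is visibly a $3$-form on a $2n$-dimensional space and hence vacuous precisely when $n=1$. The paper's approach, in exchange, exhibits the intersection as the vanishing locus of a specific irreducible summand $R_{abc}$, which fits naturally with the ${\mathrm{Sp}}(2n,{\mathbb{R}})$-module viewpoint used later in~\S\ref{geometric}.

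One minor caution: with the paper's convention $L^{ab}L_{ac}=\delta^b{}_c$ one has $L^{bc}L_{bc}=2n$ and $L^{bc}L_{ca}=-\delta^b{}_a$, $L^{bc}L_{ab}=-\delta^c{}_a$, so your contraction actually produces $(2n-2)S_a$ rather than $(2-2n)S_a$. The sign is immaterial for the conclusion, but you should align your stated traces with whichever normalisation of $L^{ab}$ you adopt.
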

\begin{proof}
According to elementary representation theory for
${\mathrm{Sp}}(2n,{\mathbb{R}})$, we can uniquely decompose
$\phi_{abc}\in\Lambda_H^1\otimes S_\perp^2$ as
\begin{equation}\label{decompose}
\phi_{abc}=P_{abc}+R_{abc}+T_bL_{ac}+T_cL_{ab}+Q_aL_{bc},\end{equation}
where
$$P_{abc}=P_{(abc)}\quad R_{abc}=R_{a(bc)}\quad R_{(abc)}=0\quad
L^{ab}R_{abc}=0,$$
and $L^{ab}L_{ac}=\delta^b{}_c$, the Kronecker delta.
The image
$$\textstyle\phi_{abc}-\phi_{bac}-\frac{1}{n}L^{de}\phi_{dec}L_{ab}$$
of this element in $\Lambda_{H\perp}^2\otimes\Lambda_H^1$ is
\begin{equation}\label{image}
\textstyle R_{abc}-R_{bac}+(Q_a-T_a)L_{bc}-(Q_b-T_b)L_{ac}
+\frac{1}{n}(Q_c-T_c)L_{ab}\end{equation}
and transvecting with $L^{bc}$ gives
$$\mbox{\large$\frac{(2n+1)(n-1)}{n}$}(Q_a-T_a).$$
Therefore, if $\phi_{abc}$ is also in $S_\perp^2\otimes\Lambda_H^1$, then it
follows that $T_a=Q_a$, from (\ref{image}) that $R_{abc}=0$, and
from (\ref{decompose}) the stated result.
\end{proof}
Similarly, if we inductively define
\begin{equation}\label{Sperpell}S_\perp^{\ell}=
(\Lambda_H^1\otimes S_\perp^{\ell-1})\cap
(S_\perp^{\ell-1}\otimes\Lambda_H^1),\quad\forall\,\ell\geq3,\end{equation}
or intrinsically as in (\ref{intrinsicdefinitionofSperp}),
then as ${\mathrm{Sp}}(2n,{\mathbb{R}})$-bundles
\begin{equation}\label{bundlesum}\textstyle S_\perp^\ell\cong
\bigodot^\ell\!\Lambda_H^1\oplus\bigodot^{\ell-2}\!\Lambda_H^1\oplus
\bigodot^{\ell-4}\!\Lambda_H^1\oplus\cdots\end{equation}
with explicit decompositions such as
\begin{equation}\label{exdecomp}
\textstyle S_\perp^4=
\left\{\begin{array}lP_{abcd}+Q_{ab}L_{cd}+Q_{ac}L_{bd}+Q_{ad}L_{bc}
+Q_{bc}L_{ad}+Q_{bd}L_{ac}+Q_{cd}L_{ab}\\
\quad{}+RL_{ab}L_{cd}+RL_{ac}L_{bd}+RL_{ad}L_{bc}\\
\qquad\quad
\mbox{where }P_{abcd}=P_{(abcd)}\mbox{ and } Q_{ab}=Q_{(ab)}\end{array}
\right\}.\end{equation}
The counterpart to (\ref{Kprimeprime}) is
\begin{equation}\label{KHprimeprime}K_H^{\prime\prime}\equiv
(S_\perp^2\otimes K_H)\cap(S_\perp^3\otimes E)\end{equation}
as the symbol of $D_H^{\prime\prime}$ and, more generally, if
$$(S_\perp^\ell\otimes K_H)\cap
(S_\perp^{\ell+1}\otimes E)=0\quad\mbox{for }\ell\mbox{
sufficiently large},$$
then, by iteration of the construction leading to
Theorem~\ref{secondcounterpart}, we may construct a vector bundle
${\mathbb{T}}$ with partial connection $\nabla_H$ such that
$$\{\Sigma\in\Gamma({\mathbb{T}})\mbox{ s.t.\ }\nabla_H\Sigma=0\}\cong
\{\sigma\in\Gamma(E)\mbox{ s.t.\ }D\sigma=0\}.$$
It particular, in this case it is clear that the solution space of $D$ is
finite-dimensional with dimension bounded by the rank of~${\mathbb{T}}$, namely
$$\textstyle\dim E +\dim K_H+\sum_{\ell\geq 1}
\dim\big((S_\perp^\ell\otimes K_H)\cap(S_\perp^{\ell+1}\otimes E)\big).$$
The details are left to the reader.

\subsection{The 3-dimensional case}\label{three}
On $3$-dimensional contact manifolds the Levi form
${\mathcal{L}}:L\to\Lambda_H^2$ is an isomorphism and so (\ref{rumin}) breaks
down. The Rumin complex~\cite{rumin} provides a perfectly satisfactory
replacement as follows.
\begin{lemma}\label{chase}
On a 3-dimensional contact manifold, there is a canonically
defined second order differential operator
$d_H^{(2)}:\Lambda_H^1\to \Lambda_H^1\otimes L$ so that
\begin{equation}\label{threeDrumin}
0\to{\mathbb{R}}\to\Lambda^0\xrightarrow{\,d_H\,}\Lambda_H^1
\xrightarrow{\,d_H^{(2)}\,}\Lambda_H^1\otimes L\end{equation}
is locally exact just as the de~Rham sequence is.
\end{lemma}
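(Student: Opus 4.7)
My plan is to define $d_H^{(2)}$ by finding, for each $\omega\in\Gamma(\Lambda_H^1)$, a canonical lift $\hat\omega\in\Gamma(\Lambda^1)$ whose ordinary exterior derivative has vanishing $\Lambda_H^2$-component, and then setting
\[
d_H^{(2)}\omega \;:=\; d\hat\omega\in\Gamma(\Lambda_H^1\otimes L),
\]
using the identification $\Lambda_H^1\otimes L\cong\ker(\Lambda^2\to\Lambda_H^2)$ obtained from $L\hookrightarrow\Lambda^1$. The crucial input is that in dimension $3$ the Levi form $\mathcal{L}:L\to\Lambda_H^2$ is a bundle isomorphism (both are line bundles), so I can invert it wherever needed.

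First I would fix any (non-canonical) local lift $\tilde\omega\in\Gamma(\Lambda^1)$ of $\omega$, project $d\tilde\omega$ to $\Lambda_H^2$, and pull that projection back through $\mathcal{L}^{-1}$ to obtain a section $\eta\in\Gamma(L)$. The corrected lift $\hat\omega:=\tilde\omega-\eta$ then has $d\hat\omega$ lying in $\Gamma(\Lambda_H^1\otimes L)$ by construction, because
\[
d\eta \;\equiv\; \mathcal{L}(\eta)\pmod{\Lambda_H^1\wedge L}
\]
via the column exactness of (\ref{deRham}). The main verification is that $\hat\omega$ does not depend on the initial choice of lift: if $\tilde\omega_1-\tilde\omega_2\in\Gamma(L)$, then the $\Lambda_H^2$-components of $d\tilde\omega_1$ and $d\tilde\omega_2$ differ by $\mathcal{L}(\tilde\omega_1-\tilde\omega_2)$, so the corrections $\eta_1,\eta_2$ satisfy $\eta_1-\eta_2=\tilde\omega_1-\tilde\omega_2$, and the differences cancel in $\hat\omega=\tilde\omega-\eta$. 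This already shows $d_H^{(2)}$ is a well-defined, canonically associated operator. Counting jets (the correction $\eta$ is first order in $\omega$, and we then apply $d$ once more) confirms that $d_H^{(2)}$ is genuinely of second order.

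For local exactness I would argue as follows. For $f\in\Gamma(\Lambda^0)$, the form $df$ is itself a lift of $d_Hf$, and $d(df)=0$ kills both the $\Lambda_H^2$-component and everything else, so $df$ is the canonical lift of $d_Hf$ and $d_H^{(2)}(d_Hf)=d(df)=0$. Conversely, if $d_H^{(2)}\omega=0$, then $d\hat\omega=0$ in $\Lambda^2$ (its $\Lambda_H^2$-part vanishes by construction, its complementary part vanishes by hypothesis), so the ordinary Poincar\'e Lemma supplies a local $f$ with $\hat\omega=df$, whence $\omega=d_Hf$. Injectivity of $\mathbb{R}\to\Lambda^0\xrightarrow{d_H}\Lambda_H^1$ was already observed in the text: $d_Hf=0$ forces $df\in\Gamma(L)$, then $0=d^2f=\mathcal{L}(df)$, and injectivity of $\mathcal{L}$ gives $df=0$.

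The step I expect to require the most care is the independence of $\hat\omega$ from the auxiliary choice of lift $\tilde\omega$, since this is precisely where the $3$-dimensional hypothesis enters (it is the surjectivity, not just injectivity, of $\mathcal{L}:L\to\Lambda_H^2$ that makes $\eta$ exist and be unique). In higher dimensions this construction would only determine $\eta$ modulo $\ker\mathcal{L}$, which is nonzero, so the canonical lift would fail to exist; this is the algebraic reason the $3$-dimensional case needs a genuinely second order replacement rather than the first order $d_H:\Lambda_H^1\to\Lambda_{H\perp}^2$ used in (\ref{rumin}).
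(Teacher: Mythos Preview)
Your proof is correct and follows essentially the same approach as the paper: the paper also forms the canonical lift $\tilde\omega-\mathcal{L}^{-1}\,q\,d\tilde\omega$ (your $\hat\omega=\tilde\omega-\eta$), defines $d_H^{(2)}\omega$ as its exterior derivative, and then invokes diagram chasing for well-definedness and local exactness. You have simply spelled out in detail the verifications (independence of the lift, the composition $d_H^{(2)}\circ d_H=0$, and the Poincar\'e-lemma step) that the paper leaves implicit.
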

\begin{proof}
With reference to diagram~(\ref{deRham}), if $\omega$ is a local section of
$\Lambda_H^1$, choose an arbitrary lift $\tilde\omega$ to~$\Lambda^1$ and
consider $\tilde\omega-{\mathcal{L}}^{-1}\,q\,d\tilde\omega$, where $\,q\,$ is
the natural projection $\Lambda^2\to\Lambda_H^2$. By diagram chasing, this is
independent of choice of $\tilde\omega$ and canonically defines a differential
operator $\Lambda_H^1\to\Lambda^1$ splitting the natural projection
$\Lambda^1\to\Lambda_H^1$. By design, it also has the property that the
composition $d(\tilde\omega-{\mathcal{L}}^{-1}\,q\,d\tilde\omega)$ actually
takes values in $\Lambda_H^1\otimes L$. This defines $d_H^{(2)}$ and further
diagram chasing ensures that (\ref{threeDrumin}) is locally exact.
\end{proof}
Just as the de~Rham sequence couples with any connection on a vector bundle, so
(\ref{threeDrumin}) couples with any partial connection. To see this we can
proceed as follows. Firstly, some linear algebra. Not only is the Levi form 
${\mathcal{L}}:L\to\Lambda_H^2$ injective, but also its range consists of 
non-degenerate forms. If, as in (\ref{Sperptwo}), we choose $L_{ab}$ in the 
range of ${\mathcal{L}}$ and, as in the proof of Lemma~\ref{Sperpthree}, write 
$L^{ab}$ for its inverse, then we obtain a complement
$$\{\omega_{ab}\in\Lambda_H^2\mbox{ s.t.\ }L^{ab}\omega_{ab}=0\}$$
to the range of~${\mathcal{L}}$, independent of the choice of~$L_{ab}$. We may 
identify this complement with~$\Lambda_{H\perp}^2$. Let us write 
$s:\Lambda_H^2\to L$ for the canonical splitting of ${\mathcal{L}}$ so 
obtained. 
\begin{proposition}\label{modcons}
Suppose that $\nabla_H:E\to\Lambda_H^1\otimes E$ is a partial connection on a
contact manifold of arbitrary dimension.
Then $\nabla_H$ extends to a connection~$\nabla$, uniquely characterised by the
vanishing of the composition
\begin{equation}\label{splitcurvature}
E\stackrel{\kappa}{\longrightarrow}\Lambda^2\otimes E
\xrightarrow{\,q\otimes{\mathrm{Id}}\,}
\Lambda_H^2\otimes E
\xrightarrow{\,s\otimes{\mathrm{Id}}\,}L\otimes E,\end{equation}
where $\kappa$ is the curvature of\/~$\nabla$. Moreover, for this connection 
$\nabla_H\sigma=0\iff\nabla\sigma=0$.
\end{proposition}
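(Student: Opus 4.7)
The plan is to parametrise the extensions of $\nabla_H$ to a full connection on $E$ and to reduce the vanishing of (\ref{splitcurvature}) to an affine algebraic equation with a unique solution, then deduce the parallel-transport equivalence from (\ref{coupledcomposition}).

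First I would note that extensions $\nabla:E\to\Lambda^1\otimes E$ of $\nabla_H$ exist (build one locally from any splitting of $\Lambda^1\to\Lambda_H^1$ and patch with a partition of unity) and that any two such extensions differ by a bundle map $E\to\Lambda^1\otimes E$ which dies under $\Lambda^1\otimes E\to\Lambda_H^1\otimes E$, hence is a section of $L\otimes\End E$. Conversely, adding any $A\in\Gamma(L\otimes\End E)$ to an extension yields another. So the candidate connections form an affine space modelled on $\Gamma(L\otimes\End E)$, and what remains is to show that the equation (\ref{splitcurvature})${}=0$ has a unique solution in this parameter.

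The key computation is how (\ref{splitcurvature}) transforms under $\nabla\mapsto\nabla+A$ with $A\in\Gamma(L\otimes\End E)$. Since $L$ is a line bundle, $A\wedge A=0$, so the curvature shifts by $d^\nabla\!A$. Applying (\ref{coupledcomposition}) with $\End E$ in place of $E$ identifies the composition
$$L\otimes\End E\hookrightarrow\Lambda^1\otimes\End E
\xrightarrow{\,d^\nabla\,}\Lambda^2\otimes\End E
\xrightarrow{\,q\otimes\mathrm{Id}\,}\Lambda_H^2\otimes\End E$$
with $\mathcal{L}\otimes\mathrm{Id}$; post-composing with $s\otimes\mathrm{Id}$ and using $s\circ\mathcal{L}=\mathrm{Id}_L$ then shows that the shift in (\ref{splitcurvature}) is exactly $A$ itself. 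Hence, starting from any initial extension with curvature $\kappa_0$, the condition (\ref{splitcurvature})${}=0$ becomes the affine equation $A=-(s\otimes\mathrm{Id})(q\otimes\mathrm{Id})\kappa_0$, uniquely soluble for~$A$. This single step is where the substance of the proposition lies; once it is in hand, both existence and uniqueness of $\nabla$ follow at once.

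For the final assertion, the implication $\nabla\sigma=0\Rightarrow\nabla_H\sigma=0$ is immediate by projection. Conversely, if $\nabla_H\sigma=0$ then $\nabla\sigma\in\Gamma(L\otimes E)$, and feeding $\nabla^2\sigma=\kappa\sigma$ through (\ref{coupledcomposition}) followed by $s\otimes\mathrm{Id}$ yields $(s\otimes\mathrm{Id})(q\otimes\mathrm{Id})(\kappa\sigma)=\nabla\sigma$; the left-hand side vanishes by the defining property of $\nabla$, forcing $\nabla\sigma=0$. The main obstacle is again just the identification via (\ref{coupledcomposition}) of the relevant projection with $\mathcal{L}\otimes\mathrm{Id}$; all the rest is formal.
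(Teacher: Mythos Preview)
Your proof is correct and follows essentially the same route as the paper's: parametrise extensions by $\Gamma(L\otimes\End E)$, use (\ref{coupledcomposition}) on $\End E$ to see that the composition (\ref{splitcurvature}) shifts by exactly the parameter, and then invoke (\ref{coupledcomposition}) again on $E$ together with $s\circ\mathcal{L}=\mathrm{Id}_L$ for the parallel-section equivalence. The only cosmetic difference is that you justify the vanishing of the quadratic term via $\rank L=1$ whereas the paper phrases it as $\Phi$ taking values in $L\otimes E$; the content is identical.
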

\begin{proof}
Pick an arbitrary extension $\nabla$ of~$\nabla_H$. Any
homomorphism $\Phi:E\to\Lambda^1\otimes E$ gives rise another connection
$\hat\nabla=\nabla+\Phi$ with curvature
$\hat\kappa=\kappa+\nabla\Phi-\Phi\wedge\Phi$, where
$\nabla:\Lambda^1\otimes\End(E)\to\Lambda^2\otimes\End(E)$ is the natural
differential operator derived from the induced connection
$\nabla:\End(E)\to\Lambda^1\otimes\End(E)$ and $\Phi\wedge\Phi$ is the
composition
$$E\xrightarrow{\,\Phi\,}\Lambda^1\otimes
E\xrightarrow{\,{\mathrm{Id}}\otimes\Phi\,}\Lambda^1\otimes\Lambda^1\otimes E
\xrightarrow{\,\wedge\otimes{\mathrm{Id}}\,}\Lambda^2\otimes E.$$
If $\hat\nabla$ is to extend~$\nabla_H$, however, then $\Phi$ must have range
in $L\otimes E\subset\Lambda^1\otimes E$. In this case, the term
$\Phi\wedge\Phi$ does not arise in the formula for~$\hat\kappa$. Also recall
(\ref{coupledcomposition}) that the composition
$$L\otimes\End(E)\hookrightarrow\Lambda^1\otimes\End(E)
\xrightarrow{\,\nabla\,}\Lambda^2\otimes\End(E)
\xrightarrow{\,q\otimes{\mathrm{Id}}\,}\Lambda_H^2\otimes\End(E)$$
is always ${\mathcal{L}}\otimes{\mathrm{Id}}$. Hence, the curvature in the
contact directions
$$E\xrightarrow{\,\kappa\,}\Lambda^2\otimes E
\xrightarrow{\,q\otimes{\mathrm{Id}}\,}\Lambda_H^2\otimes E$$
of a connection $\nabla$ extending a given partial connection $\nabla_H$ is
determined up to
$$({\mathcal{L}}\otimes{\mathrm{Id}})\Phi,\enskip
\mbox{for }\Phi:E\to L\otimes E\mbox{ an arbitrary homomorphism}.$$
Thus, its further composition with $\Lambda_H^2\otimes
E\xrightarrow{\,s\otimes{\mathrm{Id}}\,}L\otimes E$, as
in~(\ref{splitcurvature}), is determined up to~$\Phi$ and may be
precisely eliminated. For the last statement, it is clear that 
$\nabla\sigma=0\implies\nabla_H\sigma=0$. Conversely, if $\nabla_H\sigma=0$ 
then $\nabla\sigma$ is a section of $L\otimes E$ whence 
$({\mathcal{L}}\otimes{\mathrm{Id}})\nabla\sigma=
(q\otimes{\mathrm{Id}})\kappa\sigma$. The vanishing of (\ref{splitcurvature}) 
now implies that 
$$\nabla\sigma=
(s\otimes{\mathrm{Id}})({\mathcal{L}}\otimes{\mathrm{Id}})\nabla\sigma=
(s\otimes{\mathrm{Id}})(q\otimes{\mathrm{Id}})\kappa\sigma=0,$$
as required.
\end{proof}

\begin{corollary}\label{extend}
Suppose $\nabla_H\!:\!E\to\Lambda_H^1\otimes E$ is a
partial connection on a $3$-dimensional contact manifold. Then $\nabla_H$
extends to a unique connection $\nabla$ characterised by being flat in the
contact directions, i.e.\
\begin{equation}\label{characterisation}
\nabla_X\nabla_Y\sigma-\nabla_Y\nabla_X\sigma=\nabla_{[X,Y]}\sigma
\end{equation}
for all $X,Y\in\Gamma(H)$ and $\sigma\in\Gamma(E)$.
\end{corollary}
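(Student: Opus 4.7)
The plan is to deduce the corollary directly from Proposition \ref{modcons} by exploiting the low-dimensional peculiarity that the Levi form is an isomorphism of line bundles. In dimension $3$, the contact distribution $H$ has rank $2$, so $\Lambda_H^2$ has rank $1$, and the Levi form ${\mathcal{L}}:L\to\Lambda_H^2$ is an isomorphism between two line bundles. Consequently $\Lambda_{H\perp}^2=0$ and the canonical splitting $s:\Lambda_H^2\to L$ constructed just before Proposition~\ref{modcons} coincides with ${\mathcal{L}}^{-1}$; in particular $s$ is itself an isomorphism.

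First I would apply Proposition \ref{modcons} verbatim to produce a unique connection $\nabla$ extending $\nabla_H$ for which the composition (\ref{splitcurvature}) vanishes. Since $s\otimes{\mathrm{Id}}:\Lambda_H^2\otimes E\to L\otimes E$ is now an isomorphism, the vanishing of (\ref{splitcurvature}) is equivalent to the vanishing of the shorter composition
$$E\xrightarrow{\,\kappa\,}\Lambda^2\otimes E\xrightarrow{\,q\otimes{\mathrm{Id}}\,}\Lambda_H^2\otimes E,$$
that is, to the vanishing of the $\Lambda_H^2$-component of the curvature~$\kappa$ of~$\nabla$.

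Next I would interpret this as flatness in the contact directions. The component of $\kappa$ in $\Lambda_H^2\otimes\End(E)$ is precisely the restriction $\kappa|_{H\wedge H}$, so its vanishing means $\kappa(X,Y)=0$ for all $X,Y\in\Gamma(H)$. Expanding via the standard identity $\kappa(X,Y)\sigma=\nabla_X\nabla_Y\sigma-\nabla_Y\nabla_X\sigma-\nabla_{[X,Y]}\sigma$, this is exactly the condition~(\ref{characterisation}). Uniqueness of $\nabla$ (among extensions of $\nabla_H$) is inherited verbatim from Proposition~\ref{modcons}.

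There is really no hard step here: the content is the observation that in dimension~$3$ the map $s$ is invertible, so the auxiliary condition in Proposition~\ref{modcons} collapses to honest flatness along~$H$. The only thing one needs to be a touch careful about is confirming that $s={\mathcal{L}}^{-1}$ in this setting (immediate from the construction of $s$ as the splitting determined by the pairing $L^{ab}\omega_{ab}=0$ once $\Lambda_{H\perp}^2=0$), so that no ambiguity remains in identifying the vanishing of (\ref{splitcurvature}) with~(\ref{characterisation}).
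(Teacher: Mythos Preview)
Your proposal is correct and follows essentially the same approach as the paper. The paper's own proof is a terse two-sentence version of exactly what you wrote: it notes that in $3$ dimensions ${\mathcal{L}}$ is an isomorphism with $s={\mathcal{L}}^{-1}$, and that (\ref{characterisation}) is simply an explicit rendering of the vanishing of~(\ref{splitcurvature}).
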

\begin{proof} In $3$-dimensions ${\mathcal{L}}$ is an isomorphism and 
$s={\mathcal{L}}^{-1}$. Equation (\ref{characterisation}) is an explicit
rendering of the vanishing curvature~(\ref{splitcurvature}).
\end{proof}
Now, to couple (\ref{threeDrumin}) with $\nabla_H$ we simply extend to a full
connection~$\nabla$ on $E$ in accordance with Corollary~\ref{extend}. Then,
bearing in mind that the composition (\ref{coupledcomposition}) is simply
${\mathcal{L}}\otimes{\mathrm{Id}}$, the construction just given in the proof
of Lemma~\ref{chase} goes through almost unchanged. This can be seen by chasing
the following diagram
\begin{equation}\label{coupleddiagram}\begin{array}{ccccccccc}
&&&&0&&0\\
&&&&\downarrow&&\downarrow\\
&&&&L\otimes E&&\makebox[0pt]{$\Lambda_H^1\otimes L\otimes E$}\\
&&&&\downarrow&&\downarrow\\
0&\to&E&\xrightarrow{\,\nabla\,}
     &\Lambda^1\otimes E&\xrightarrow{\,\nabla\,}
     &\Lambda^2\otimes E&\xrightarrow{\,\nabla\,}&\cdots\\
&&&\begin{picture}(12,5)\put(-2,6){\vector(2,-1){14}}
\put(5,-5){\makebox(0,0){$\scriptstyle\nabla_H$}}\end{picture}
&\downarrow&&\downarrow\\
&&&&\Lambda_H^1\otimes E&&\Lambda_H^2\otimes E\\
&&&&\downarrow&&\downarrow\\
&&&&0&&0
\end{array}\end{equation}
obtained by coupling (\ref{deRham}) with the connection $\nabla$ provided by
Corollary~\ref{extend}. Let us write $\nabla_H^{(2)}$ for the resulting
operator. Of course, it is no longer the case that the composition
\begin{equation}\label{composition}
E\xrightarrow{\,\nabla_H\,}\Lambda_H^1\otimes E\xrightarrow{\,\nabla_H^{(2)}\,}
\Lambda_H^1\otimes L\otimes E\end{equation} 
vanishes. Instead, since the connection $\nabla$ is characterised by having its
curvature compose with $\Lambda^2\otimes E\to\Lambda_H^2\otimes E$ to give
zero, it follows immediately from (\ref{coupleddiagram}) that the composition
(\ref{composition}) is precisely this curvature, which we shall now write
as~$\kappa_H$. (The contrast between 3-dimensions and higher regarding the
notion of curvature of a partial connection is also noted and explored 
in~\cite{pansu}.)

We may now establish a counterpart to Theorem~\ref{firstcontacttheorem} in
the 3-dimensional setting.
\begin{theorem}\label{firstthreeDcontacttheorem}
There is a differential operator
$$\widetilde{D}_H:
E_H^\prime=\begin{array}cE\\ \oplus\\ K_H\end{array}\longrightarrow
\begin{array}c\Lambda_H^1\otimes E\\ \oplus\\
\Lambda_H^1\otimes L\otimes E\end{array}$$
so that the canonical projection $E_H^\prime\to E$ induces an isomorphism
$$\{\Sigma\in\Gamma(E_H^\prime)\mbox{\rm\ s.t.\ }\widetilde{D}_H\Sigma=0\}\cong
\{\sigma\in\Gamma(E)\mbox{\rm\ s.t.\ }D\sigma=0\}.$$
\end{theorem}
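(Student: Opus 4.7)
The plan is to mimic the proof of Theorem \ref{firstcontacttheorem}, replacing the useless first-order operator $d_H:\Lambda_H^1\to\Lambda_{H\perp}^2$ (which vanishes in dimension~$3$) by the second-order Rumin operator $d_H^{(2)}:\Lambda_H^1\to\Lambda_H^1\otimes L$ of Lemma~\ref{chase}, coupled with a carefully chosen extension of the underlying partial connection.

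First, I would fix a splitting of the short exact sequence $0\to K_H\to E_H^\prime\to E\to 0$ obtained from the diagram (\ref{diagJH}); as in the higher-dimensional case this produces a partial connection $\nabla_H:E\to\Lambda_H^1\otimes E$ such that $D=\pi_H\circ\nabla_H$. Then I would invoke Corollary~\ref{extend} to extend $\nabla_H$ uniquely to a full connection $\nabla$ on $E$ that is flat in the contact directions, and couple the sequence (\ref{threeDrumin}) with this $\nabla$ as in diagram (\ref{coupleddiagram}). This yields the second-order operator $\nabla_H^{(2)}:\Lambda_H^1\otimes E\to\Lambda_H^1\otimes L\otimes E$ described in the passage preceding the theorem, and from that same discussion the composition $\nabla_H^{(2)}\circ\nabla_H:E\to\Lambda_H^1\otimes L\otimes E$ is a bundle homomorphism $\kappa_H$ rather than an honest differential operator (this is precisely what the flatness-in-contact-directions arranges).

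Next, I would define
$$\widetilde{D}_H\left[\begin{array}c\sigma\\ \mu\end{array}\right]
=\left[\begin{array}c\nabla_H\sigma-\mu\\ \nabla_H^{(2)}\mu-\kappa_H\sigma\end{array}\right],$$
and verify the isomorphism (\ref{basicisomorphism}) exactly as for Theorem~\ref{firstcontacttheorem}. If $\widetilde{D}_H(\sigma,\mu)=0$, the first component forces $\mu=\nabla_H\sigma\in\Gamma(K_H)$, hence $D\sigma=\pi_H(\nabla_H\sigma)=\pi_H\mu=0$ since $\mu$ lies in $\ker\pi_H$. Conversely, for $\sigma$ with $D\sigma=0$ we have $\nabla_H\sigma\in\Gamma(K_H)$, and defining $\mu:=\nabla_H\sigma$ kills the first component by construction and kills the second component because $\nabla_H^{(2)}\mu=\nabla_H^{(2)}\nabla_H\sigma=\kappa_H\sigma$ by the definition of $\kappa_H$. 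Injectivity of the projection on the solution space is automatic since $\mu$ is recovered from $\sigma$ via $\mu=\nabla_H\sigma$.

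The main obstacle, and the only real novelty compared to Theorem~\ref{firstcontacttheorem}, is conceptual rather than computational: the second component of $\widetilde{D}_H$ is now a second-order operator on $\mu$, because $d_H^{(2)}$ is second order. For this to slot into the same prolongation framework one must ensure that $\kappa_H$ is genuinely tensorial (a zero-order curvature) so that the compatibility equation $\nabla_H^{(2)}\mu=\kappa_H\sigma$ makes sense as a closed system. This is exactly what Corollary~\ref{extend} delivers: the extension $\nabla$ is characterised so that its contact-direction curvature vanishes, which is what forces $\nabla_H^{(2)}\circ\nabla_H$ to be $E$-linear. Once this point is clear, the remainder of the argument is identical to the proof of Theorem~\ref{firstcontacttheorem}.
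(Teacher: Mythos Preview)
Your proposal is correct and follows essentially the same approach as the paper: you define $\widetilde{D}_H$ by exactly the formula~(\ref{formulafortildeDH}) and verify the isomorphism by the same back-and-forth argument used for Theorem~\ref{firsttheorem}. One small clarification: the tensoriality of $\kappa_H$ is not really a consequence of the contact-flatness condition but simply the fact that the composition~(\ref{composition}) equals the curvature of the full connection~$\nabla$ (curvature is always $E$-linear); what Corollary~\ref{extend} buys is that this curvature, a priori a section of $\Lambda^2\otimes\End(E)$, actually lands in $\Lambda_H^1\otimes L\otimes\End(E)$, so that the target of $\widetilde{D}_H$ is as stated.
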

\begin{proof}Define $\widetilde{D}_H$ by
\begin{equation}\label{formulafortildeDH}
\left[\begin{array}c\sigma\\ \mu\end{array}\right]
\stackrel{\widetilde{D}_H}{\longmapsto}
\left[\begin{array}c\nabla_H\sigma-\mu\\
\nabla_H^{(2)}\mu-\kappa_H\sigma\end{array}\right],\end{equation}
noting that $\nabla_H\sigma=\mu\implies\nabla_H^{(2)}\mu=\kappa_H\sigma$ by
applying $\nabla_H^{(2)}$. Apart from this natural adjustment, the remainder of
the proof is as for Theorem~\ref{firsttheorem}.
\end{proof}
There is, of course, a significant difference between
Theorems~\ref{firstcontacttheorem} and~\ref{firstthreeDcontacttheorem} stemming
from the significantly different behaviour of the Rumin complex. The operator
$\widetilde{D}_H$ in dimension 5 and higher is again first order. But
$\widetilde{D}_H$ in Theorem~\ref{firstthreeDcontacttheorem} is second order.
Instead, we would like a first order prolonged operator and an analogue of
Theorem~\ref{secondcounterpart}.

To remedy this we may proceed as follows. Firstly, we shall present an argument
involving special local co\"ordinates and then we shall indicate how to remove
this choice to obtain a global result. For any contact distribution in 3
dimensions there are well-known local co\"ordinates $(x,y,z)$ due to Darboux
such that the contact distribution is spanned by $X\equiv\partial/\partial x$
and $Y\equiv\partial/\partial y+x\partial/\partial z$. Notice that
\begin{equation}\label{heisenberg}[X,Y]=Z\quad [X,Z]=0\quad [Y,Z]=0
\end{equation}
where $Z\equiv\partial/\partial z$. The vector fields $X,Y,Z$ span the tangent
vectors near the origin. Dually, the cotangent vectors are spanned by
$dx,dy,dz-x\,dy$ and we may split the projection $\Lambda^1\to\Lambda_H^1$ by
decreeing that $dx,dy$ span the lift of~$\Lambda_H^1$. It is then a simple
exercise to write out $d_H^{(2)}$ of Lemma~\ref{chase} using these local
co\"ordinates. Firstly, we compute the Levi form:--
$$L\ni dz-x\,dy\mapsto -dx\wedge dy\in\Lambda_H^2.$$
Following the recipe in the proof of Lemma~\ref{chase}, by writing
$\omega=g\,dx+h\,dy$ we have already lifted $\omega\in\Lambda_H^1$ to a
$1$-form $\tilde\omega$. Therefore,
$${\mathcal{L}}^{-1}\,q\,d\tilde\omega
={\mathcal{L}}^{-1}(Xh-Yg)\,dx\wedge dy=(Xh-Yg)(x\,dy-dz)$$
so
$$\tilde\omega-{\mathcal{L}}^{-1}\,q\,d\tilde\omega=
g\,dx+(h-xXh+xYg)\,dy+(Xh-Yg)\,dz.$$
Computing $d(\tilde\omega-{\mathcal{L}}^{-1}\,q\,d\tilde\omega)$ now
yields
$$\big((X^2h-XYg-Zg)\,dx+(YXh-Y^2g-Zh)\,dy\big)\wedge(dz-x\,dy).$$
Regarding this a section of $\Lambda_H^1\otimes L$ allows us to write out
(\ref{threeDrumin}) explicitly:--
$$f\stackrel{d_H}{\longmapsto}
\left[\begin{array}cXf\\ Yf\end{array}\right]\qquad
\left[\begin{array}cg\\ h\end{array}\right]\stackrel{d_H^{(2)}}{\longmapsto}
\left[\begin{array}cX^2h-XYg-Zg\\ YXh-Y^2g-Zh\end{array}\right],$$
where $\Lambda_H^1$ is trivialised using $dx,dy$ and $L$ is trivialised using
$dz-x\,dy$. As a check, notice that the composition is easily seen to be zero
by dint of~(\ref{heisenberg}). The coupled operators are given by essentially
the same formul{\ae}. Specifically, a partial connection on a vector bundle $E$
is determined by $\nabla_X$ and~$\nabla_Y$. Corollary~\ref{extend}
promotes this to a full connection by
$\nabla_Z\equiv\nabla_X\nabla_Y-\nabla_Y\nabla_X$ and (\ref{composition})
becomes
\begin{equation}\label{becomes}\sigma\stackrel{\nabla_H}{\longmapsto}
\left[\begin{array}c\nabla_X\sigma\\ \nabla_Y\sigma\end{array}\right]\qquad
\left[\begin{array}c\tau\\ \upsilon\end{array}\right]
\stackrel{\nabla_H^{(2)}}{\longmapsto} \left[\begin{array}c
\nabla_X\nabla_X\upsilon-\nabla_X\nabla_Y\tau-\nabla_Z\tau\\
\nabla_Y\nabla_X\upsilon-\nabla_Y\nabla_Y\tau-\nabla_Z\upsilon
\end{array}\right].\end{equation}
A second order linear differential operator $V\to W$ on a contact manifold is
said to be {\em compatible\/} with the contact structure if and only if its
symbol
$\bigodot^2\!\Lambda^1\otimes V\to W$ factors through the canonical projection
$\bigodot^2\!\Lambda^1\otimes V\to\bigodot^2\!\Lambda_H^1\otimes V$.
{From}~(\ref{becomes}), the operator $\nabla_H^{(2)}$ of (\ref{composition})
evidently has this property and hence so does its restriction to~$K_H$. This is
the key observation needed to re-express (\ref{formulafortildeDH}) as a first
order system. We proceed as follows. Pick any partial connection on $K_H$ and
extend to a full connection according to Corollary~\ref{extend}. Pick local
Darboux co\"ordinates $(x,y,z)$ as above and write
\begin{equation}\label{derivatives}
\nabla_1\equiv\nabla_X\qquad\nabla_2\equiv\nabla_Y\qquad
\nabla_0\equiv\nabla_Z=\nabla_1\nabla_2-\nabla_2\nabla_1.\end{equation}
To say that the second order operator $\nabla_H^{(2)}$ is compatible with the
contact structure means that we can write it uniquely as
$$\mu\stackrel{\,\nabla_H^{(2)}\,}{\longmapsto}S^{ab}\nabla_a\nabla_b\mu
+\Gamma^0\nabla_0\mu+\Gamma^a\nabla_a\mu+\Theta\mu,$$
where $S^{ab}$, $\Gamma^0$, $\Gamma^a$, $\Theta$ all take values in
$\Hom(K_H,\Lambda_H^1\otimes L\otimes E)$ and $S^{ab}$ is symmetric. Therefore,
we can write the equation $\nabla_H^{(2)}\mu=\kappa_H\sigma$ as
\begin{equation}\label{writeasfirstordersystem}
{\mathcal{P}}\rho+\Theta\mu=\kappa_H\sigma,\end{equation}
where
$${\mathcal{P}}\rho\equiv
S^{ab}\nabla_a\rho_b+\Gamma^0(\nabla_1\rho_2-\nabla_2\rho_1)+
\Gamma^a\rho_a\quad\mbox{and}\quad
\rho_a=\nabla_a\mu.$$
Overall, if we define a first order operator
\begin{equation}\label{defDhat}E_H^{\prime\prime}\equiv
\begin{array}cE\\ \oplus\\ K_H\\ \oplus\\
\makebox[0pt]{$\Lambda_H^1\otimes K_H$}\end{array}
\xrightarrow{\,\widehat{D}_H\,}
\begin{array}c\Lambda_H^1\otimes E\\ \oplus\\ \Lambda_H^1\otimes K_H\\
\oplus\\
\makebox[0pt]{$\Lambda_H^1\otimes L\otimes E$}\end{array}\quad\mbox{by }
\left[\begin{array}c\sigma\\ \mu\\ \rho\end{array}\right]\longmapsto
\left[\begin{array}c\nabla_H\sigma-\mu\\ \nabla_H\mu-\rho\\
{\mathcal{P}}\rho+\Theta\mu-\kappa_H\sigma\end{array}\right],\end{equation}
then we have proved
\begin{theorem}\label{improvedfirstthreeDcontacttheorem}
The projection $E_H^{\prime\prime}\to E$ induces an isomorphism
$$\{\Sigma\in\Gamma(E_H^{\prime\prime})\mbox{\rm\ s.t.\ }
\widehat{D}_H\Sigma=0\}\cong \{\sigma\in\Gamma(E)
\mbox{\rm\ s.t.\ }D\sigma=0\}.$$
\end{theorem}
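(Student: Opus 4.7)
The plan is to exhibit an explicit two-sided inverse to the projection $E_H^{\prime\prime}\to E$ by reading off information from the three equations encoded in $\widehat{D}_H\Sigma=0$. For the easy direction, suppose $\Sigma=(\sigma,\mu,\rho)$ satisfies $\widehat{D}_H\Sigma=0$. The first component yields $\mu=\nabla_H\sigma$ and, since $\mu\in\Gamma(K_H)=\Gamma(\ker\pi_H)$, applying $\pi_H$ gives $D\sigma=\pi_H\nabla_H\sigma=\pi_H\mu=0$, so the projection sends solutions to solutions.

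The substantive direction is to construct the inverse. Given $\sigma\in\Gamma(E)$ with $D\sigma=0$, set $\mu\equiv\nabla_H\sigma$ (a section of $K_H$ because $\pi_H\mu=D\sigma=0$) and $\rho\equiv\nabla_H\mu$, using the chosen partial connection on $K_H$ promoted to a full connection via Corollary~\ref{extend}. Then the first two components of $\widehat{D}_H(\sigma,\mu,\rho)$ vanish tautologically. The third component demands
$$\mathcal{P}\rho+\Theta\mu=\kappa_H\sigma.$$
The key identity is that, once $\rho_a=\nabla_a\mu$, the definition of $\mathcal{P}$ reassembles precisely into $\nabla_H^{(2)}\mu-\Theta\mu$: substituting $\rho_a=\nabla_a\mu$ into $\mathcal{P}\rho=S^{ab}\nabla_a\rho_b+\Gamma^0(\nabla_1\rho_2-\nabla_2\rho_1)+\Gamma^a\rho_a$ recovers $S^{ab}\nabla_a\nabla_b\mu+\Gamma^0\nabla_0\mu+\Gamma^a\nabla_a\mu$, where in the middle term we have used $\nabla_0=\nabla_1\nabla_2-\nabla_2\nabla_1$ on $K_H$, which holds by Corollary~\ref{extend}. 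Hence $\mathcal{P}\rho+\Theta\mu=\nabla_H^{(2)}\mu$, and the third equation becomes $\nabla_H^{(2)}\nabla_H\sigma=\kappa_H\sigma$, which is exactly the compatibility observed in the proof of Theorem~\ref{firstthreeDcontacttheorem}: applying $\nabla_H^{(2)}$ to $\nabla_H\sigma=\mu$ produces $\nabla_H^{(2)}\mu=\kappa_H\sigma$.

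It remains to check the two maps are mutually inverse. Starting from $\Sigma=(\sigma,\mu,\rho)\in\ker\widehat{D}_H$, projecting to $\sigma$ and mapping back yields $(\sigma,\nabla_H\sigma,\nabla_H\nabla_H\sigma)$; the first two components of $\widehat{D}_H\Sigma=0$ force $\mu=\nabla_H\sigma$ and $\rho=\nabla_H\mu$, so we recover $\Sigma$. The reverse composition is immediate.

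The main conceptual step is the reassembly $\mathcal{P}\rho+\Theta\mu=\nabla_H^{(2)}\mu$ when $\rho=\nabla_H\mu$; everything else is bookkeeping. This identity is precisely the payoff of the \emph{contact compatibility} of $\nabla_H^{(2)}$ on $K_H$ (visible from the explicit formulas~(\ref{becomes})), which allows the would-be transverse derivative $\nabla_0\mu$ to be re-expressed through the Heisenberg commutator~(\ref{heisenberg}) of purely contact derivatives, turning what was a second-order equation in Theorem~\ref{firstthreeDcontacttheorem} into a genuinely first-order equation on the augmented bundle $E_H^{\prime\prime}$. Although $\mathcal{P}$ was written down using Darboux co\"ordinates, the identity itself — and hence the isomorphism — is co\"ordinate-independent, being equivalent to the intrinsic statement that $\nabla_H^{(2)}\mu$ depends on $\mu$ through its full connection derivative $\nabla\mu$ only via the projection $\nabla\mu\mapsto\nabla_H\mu=\rho$.
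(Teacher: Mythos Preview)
Your proof is correct and follows essentially the same approach as the paper: the key identity $\mathcal{P}\rho+\Theta\mu=\nabla_H^{(2)}\mu$ when $\rho=\nabla_H\mu$ is precisely the content of~(\ref{writeasfirstordersystem}), and the paper's ``then we have proved'' before the theorem statement refers to exactly the bijection you spell out. Your write-up is just more explicit about the inverse map and the mutual-inverse check, which the paper leaves implicit.
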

This is the claimed remedy for Theorem~\ref{firstthreeDcontacttheorem}.
Certainly, the new operator $\widehat{D}_H$ is first order. It is compatible
with the contact structure because the same is true of~${\mathcal{P}}$. Indeed,
from the formula for~$\widehat{D}_H$, it is clear that on the first two
components of $E_H^{\prime\prime}$ the symbol is induced by the canonical
projection $\Lambda^1\to\Lambda_H^1$. Therefore, the symbol of
$\widehat{D}_H$ is carried by the symbol of ${\mathcal{P}}$
$$\Lambda^1\otimes\Lambda_H^1\otimes K_H\to\Lambda_H^1\otimes L\otimes E,$$
which is, in turn, carried by the tensors $S^{ab}$ and $\Gamma^0$, which we now
compute.
\begin{lemma}\label{contactsymbolthreeD}
For any second order operator $V\to W$ compatible with a three-dimensional
contact structure and written in Darboux local co\"ordinates as
$$\mu\longmapsto
S^{ab}\nabla_a\nabla_b\mu+\Gamma^0\nabla_0\mu+\Gamma^a\nabla_a\mu+\Theta\mu
\quad\mbox{with $S^{ab}=S^{ba}$}$$
for some partial connection $\nabla_a$ on~$V$, the vector bundle homomorphisms
$$\textstyle S^{ab}\in\Hom(V,W)\quad\mbox{for }a,b=1,2\quad\mbox{and}
\quad\Gamma^0\in\Hom(V,W)$$
are independent of choice of partial connection.
\end{lemma}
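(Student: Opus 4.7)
The plan is to show that a change of partial connection $\hat\nabla_a=\nabla_a+A_a$ (where $A_1,A_2$ are bundle endomorphisms of $V$) leaves the coefficients $S^{ab}$ and $\Gamma^0$ unaltered, by rewriting the operator in the $\hat\nabla$-frame and appealing to the uniqueness of such an expansion. So first I would establish this uniqueness. Once a partial connection is chosen and extended by $\nabla_0=\nabla_1\nabla_2-\nabla_2\nabla_1$, the classical second-order symbol of the operator is $S^{ab}\xi_a\xi_b$ (with $a,b=1,2$): the term $\Gamma^0\nabla_0\mu$, being classically of order one, contributes nothing to the second-order symbol, and contact-compatibility ensures no $dz$-part appears. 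Hence $S^{ab}$ (symmetric) is recovered from the symbol. Subtracting $S^{ab}\nabla_a\nabla_b$ leaves a classically first-order operator whose classical first-order symbol, in the Darboux coframe dual to $X,Y,Z$, is $\Gamma^0\theta_0+\Gamma^a\theta_a$; the three covectors are independent, so $\Gamma^0$ and $\Gamma^a$ are recovered, with $\Theta$ being the residue.

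Next I would run the computation. Using $\nabla_a=\hat\nabla_a-A_a$, a direct calculation gives
$$\nabla_a\nabla_b\mu=\hat\nabla_a\hat\nabla_b\mu-(\hat\nabla_a A_b)\mu-A_b\hat\nabla_a\mu-A_a\hat\nabla_b\mu+A_aA_b\mu.$$
Contracting with $S^{ab}=S^{ba}$ shows $S^{ab}\nabla_a\nabla_b\mu=S^{ab}\hat\nabla_a\hat\nabla_b\mu$ modulo terms involving at most one $\hat\nabla$ applied to $\mu$ (with tensorial coefficients). Applying the same identity with $(a,b)=(1,2)$ and $(2,1)$ and subtracting, the cross terms $A_b\hat\nabla_a\mu+A_a\hat\nabla_b\mu$ cancel identically, yielding
$$\nabla_0\mu=\hat\nabla_0\mu-\bigl[(\hat\nabla_1A_2)-(\hat\nabla_2A_1)+[A_1,A_2]\bigr]\mu,$$
so $\Gamma^0\nabla_0\mu=\Gamma^0\hat\nabla_0\mu$ modulo a purely zeroth-order term. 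Consequently the operator, re-expressed in the $\hat\nabla$-frame, reads
$$S^{ab}\hat\nabla_a\hat\nabla_b\mu+\Gamma^0\hat\nabla_0\mu+\hat\Gamma^a\hat\nabla_a\mu+\hat\Theta\mu$$
for some new $\hat\Gamma^a,\hat\Theta$. Uniqueness of the expansion (applied now in the $\hat\nabla$-frame) forces $\hat S^{ab}=S^{ab}$ and $\hat\Gamma^0=\Gamma^0$, as claimed.

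The step that I expect to require the most care is the identification of $\hat\nabla_0-\nabla_0$ as a zeroth-order correction. This is delicate because $\nabla_0$ itself is built from the partial connection, so a priori one might fear first-order corrections of the form $A_i\hat\nabla_j$. That these cancel is exactly where the antisymmetric definition $\nabla_0=\nabla_1\nabla_2-\nabla_2\nabla_1$ interacts with the Darboux relation $[X,Y]=Z$: the same skewing that makes $\nabla_0$ independent of symmetric reordering also forces the first-order corrections induced by $A_a$ to cancel. Combined with the symmetry of $S^{ab}$, which absorbs the cross-contraction $A_a\hat\nabla_b+A_b\hat\nabla_a$ into the subleading coefficients $\hat\Gamma^a$, this is precisely what preserves the two highlighted coefficients $S^{ab}$ and $\Gamma^0$.
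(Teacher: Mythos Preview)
Your proof is correct and follows essentially the same approach as the paper's: the paper simply says ``any other partial connection has the form $\hat\nabla_a\mu=\nabla_a\mu-\Xi_a\mu$ and the required conclusion follows by substitution,'' and you have carried out that substitution in full detail, including the useful preliminary observation that the expansion is unique once a partial connection is fixed. (A minor sign slip: from your own expansion one gets $\nabla_0\mu-\hat\nabla_0\mu=-(\hat\nabla_1A_2)\mu+(\hat\nabla_2A_1)\mu+[A_1,A_2]\mu$, so the commutator enters with the opposite sign to what you wrote; this does not affect the argument, since the point is only that the correction is zeroth order.)
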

\begin{proof}
Any other partial connection has the form
\begin{equation}\label{changeonV}
\hat\nabla_a\mu=\nabla_a\mu-\Xi_a\mu\quad\mbox{for }\Xi_1,\Xi_2\in\End(V)
\end{equation}
and the required conclusion follows by substitution.
\end{proof}
\begin{lemma}\label{restriction}
Given the hypotheses of Lemma~\ref{contactsymbolthreeD} and a subbundle
$U\subseteq V$, the corresponding homomorphisms for the differential operator
restricted to $U$ are simply $S^{ab}|_U$ and $\Gamma^0|_U$.
\end{lemma}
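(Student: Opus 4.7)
The plan is to leverage the independence-of-connection statement in Lemma \ref{contactsymbolthreeD}: since $S^{ab}$ and $\Gamma^0$ do not depend on which partial connection on $U$ one uses to write the restricted operator in standard form, we are free to pick one that makes the restriction transparent. The strategy is therefore to build a partial connection on $V$ that preserves the subbundle $U\subseteq V$, use it to express $D\colon V\to W$ in the form of Lemma \ref{contactsymbolthreeD}, and then simply plug in sections of $U$.

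Concretely, I would fix a smooth splitting $V=U\oplus U'$, pick arbitrary partial connections $\nabla^U_a$ on $U$ and $\nabla^{U'}_a$ on $U'$, and form the direct-sum partial connection $\nabla_a=\nabla^U_a\oplus\nabla^{U'}_a$ on $V$. For any $\mu\in\Gamma(U)\subset\Gamma(V)$ one then has $\nabla_a\mu=\nabla^U_a\mu\in\Gamma(\Lambda_H^1\otimes U)$, and since the induced connections on $\Lambda_H^1\otimes V$ built from $\nabla_a$ also split as direct sums, every iterated derivative $\nabla_a\nabla_b\mu$ is a section of $\Lambda_H^1\otimes\Lambda_H^1\otimes U$. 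The transverse derivative produced by Corollary \ref{extend}, namely $\nabla_0=\nabla_1\nabla_2-\nabla_2\nabla_1$, likewise preserves $U$.

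Now write $D$ in Darboux coordinates using this $\nabla_a$, so that
$$D\mu=S^{ab}\nabla_a\nabla_b\mu+\Gamma^0\nabla_0\mu+\Gamma^a\nabla_a\mu+\Theta\mu.$$
Feeding $\mu\in\Gamma(U)$ into this expression, each derivative factor lives in a bundle built from $U$, so the homomorphisms $S^{ab},\Gamma^0,\Gamma^a,\Theta\colon V\to W$ contribute only through their restrictions $S^{ab}|_U,\Gamma^0|_U,\Gamma^a|_U,\Theta|_U\colon U\to W$. This exhibits the restricted operator $D|_U\colon U\to W$ in the exact form of Lemma \ref{contactsymbolthreeD}, computed relative to the partial connection $\nabla^U_a$, with second-order coefficient $S^{ab}|_U$ and transverse coefficient $\Gamma^0|_U$. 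Invoking Lemma \ref{contactsymbolthreeD} on $U$ to guarantee that these coefficients do not depend on which partial connection on $U$ was used, we conclude that the canonically defined $S^{ab}$ and $\Gamma^0$ of $D|_U$ are precisely $S^{ab}|_U$ and $\Gamma^0|_U$.

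The only genuinely delicate point in this argument is ensuring that the direct-sum partial connection on $V$ really does preserve the subbundle structure through all the compositions used to form $\nabla_a\nabla_b$ and $\nabla_0$; once this is noted, the proof is essentially a bookkeeping exercise. There is no comparable restriction statement for $\Gamma^a$ or $\Theta$, which is consistent with the change-of-connection formula implicit in (\ref{changeonV}) contaminating those lower-order pieces with terms involving $\Xi_a$.
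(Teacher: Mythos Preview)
Your argument is correct and is essentially the paper's own proof, just spelled out in more detail: the paper simply says ``start with a partial connection on $U$ and extend it to~$V$,'' and your direct-sum construction $\nabla_a=\nabla^U_a\oplus\nabla^{U'}_a$ is exactly one such extension. The observation that all iterated derivatives of a section of $U$ then remain $U$-valued, so that the coefficients appearing in $D|_U$ are the restrictions $S^{ab}|_U$ and $\Gamma^0|_U$, is precisely the content the paper leaves implicit.
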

\begin{proof} Now that we know by Lemma~\ref{contactsymbolthreeD} that these
homomorphisms are well-defined, we can start with a partial connection on $U$
and extend it to~$V$.
\end{proof}
\begin{remark} As far as the homomorphisms $S^{ab}$ are concerned,
Lemmata~\ref{contactsymbolthreeD} and~\ref{restriction} are merely saying that
the symbol $S:\bigodot^2\!\Lambda_H^1\otimes V\to W$ is invariantly
defined and behaves well when restricted to a subbundle. This is
completely standard. The new aspect is that, on a $3$-dimensional contact
manifold, the particular lower order coefficient $\Gamma^0$ behaves just as
well. This is a familiar feature of contact geometry whereby derivatives
transverse to the contact distribution should ``count double''. Later, in
Proposition~\ref{invarianceofsecondenhancedsymbol}, we shall
see this feature more precisely and find that there is an enhanced symbol in
all dimensions, best regarded as a homomorphism $S_\perp^2\otimes V\to W$.
\end{remark}
Recall that we wanted to compute the symbol of~${\mathcal{P}}$. {From}
Lemmata~\ref{contactsymbolthreeD} and~\ref{restriction} it follows that we may
do this by performing the analogous computation for the operator
$\nabla_H^{(2)}:\Lambda_H^1\otimes E\to\Lambda_H^1\otimes L\otimes E$, for
which we have a formula~(\ref{becomes}), and then restrict the result to
$K_H\subseteq\Lambda_H^1\otimes E$. Using (\ref{derivatives}) we may
re-write~(\ref{becomes}):--
$$\nabla_H^{(2)}\left[\begin{array}c\tau\\ \upsilon\end{array}\right]
=\left[\begin{array}c
\nabla_1\nabla_1\upsilon-\frac{1}{2}(\nabla_1\nabla_2+\nabla_2\nabla_1)\tau
-\frac{3}{2}\nabla_0\tau\\[4pt]
\frac{1}{2}(\nabla_2\nabla_1+\nabla_1\nabla_2)\upsilon-\nabla_2\nabla_2\tau
-\frac{3}{2}\nabla_0\upsilon
\end{array}\right].$$
This is of the form required in Lemma~\ref{contactsymbolthreeD} with $E$ as a
passenger and, otherwise,
$$\begin{array}c\begin{array}{rcl}S^{11}\,dx&=&0\\
S^{11}\,dy&=&dx\wedge(dz-x\,dy)\end{array}\quad
\begin{array}{rcl}S^{12}\,dx&=&-\frac{1}{2}dx\wedge(dz-x\,dy)\\[4pt]
S^{12}\,dy&=&\frac{1}{2}dy\wedge(dz-x\,dy)\end{array}\\[14pt]
\begin{array}{rcl}S^{22}\,dx&=&-dy\wedge(dz-x\,dy)\\
S^{22}\,dy&=&0\end{array}\quad
\begin{array}{rcl}\Gamma^0\,dx&=&-\frac{3}{2}dx\wedge(dz-x\,dy)\\[4pt]
\Gamma^0\,dy&=&-\frac{3}{2}dy\wedge(dz-x\,dy).\end{array}\end{array}$$
Therefore, the symbol of the corresponding first order operator
$$\Lambda_H^1\otimes\Lambda_H^1\otimes E\longrightarrow
\Lambda_H^1\otimes L\otimes E$$
has $E$ as a passenger and otherwise factors through the homomorphism
$$\Lambda_H^1\otimes\Lambda_H^1\otimes\Lambda_H^1\longrightarrow
\Lambda_H^1\otimes L$$
given by
\begin{equation}\label{hom}\begin{array}{rcl}
dx\otimes dx\otimes dx&\mapsto&0\\
dx\otimes dx\otimes dy&\mapsto&dx\wedge(dz-x\,dy)\\
dx\otimes dy\otimes dx&\mapsto&-2\,dx\wedge(dz-x\,dy)\\
dx\otimes dy\otimes dy&\mapsto&-dy\wedge(dz-x\,dy)\\
dy\otimes dx\otimes dx&\mapsto&dx\wedge(dz-x\,dy)\\
dy\otimes dx\otimes dy&\mapsto&2\,dy\wedge(dz-x\,dy)\\
dy\otimes dy\otimes dx&\mapsto&-dy\wedge(dz-x\,dy)\\
dy\otimes dy\otimes dy&\mapsto&0.
\end{array}\end{equation}
The most important attribute of this homomorphism is its kernel:--
\begin{proposition} The kernel of the homomorphism {\rm(\ref{hom})} is
$$\textstyle\bigodot^3\!\Lambda_H^1\oplus
{\mathrm{span}}\{dx\otimes dx\otimes dy-dy\otimes dx\otimes dx,
dx\otimes dy\otimes dy-dy\otimes dy\otimes dx\}.$$
\end{proposition}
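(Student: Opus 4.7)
The plan is to verify the statement by elementary linear algebra on the $8$-dimensional domain. Since $\Lambda_H^1$ has rank~$2$, the space $\bigotimes^3\Lambda_H^1$ and its subspace $\bigodot^3\Lambda_H^1$ have explicit pure-tensor bases ($8$ and $4$ elements respectively), while $\Lambda_H^1\otimes L$ has rank~$2$. First I would establish surjectivity of~(\ref{hom}): the image contains $dx\wedge(dz-x\,dy)$ (as the image of $dx\otimes dx\otimes dy$) and $dy\wedge(dz-x\,dy)$ (as minus the image of $dx\otimes dy\otimes dy$), and these span $\Lambda_H^1\otimes L$. The kernel therefore has dimension $8-2=6$, matching the size of the claimed spanning set.

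Next I would check that the six candidate generators all lie in the kernel. The pure cubes $dx^{\otimes 3}$ and $dy^{\otimes 3}$ are killed by inspection. For the symmetrised tensor $dx\otimes dx\otimes dy+dx\otimes dy\otimes dx+dy\otimes dx\otimes dx$, the three relevant rows of~(\ref{hom}) contribute $(1-2+1)\,dx\wedge(dz-x\,dy)=0$; the mirror calculation $(-1+2-1)\,dy\wedge(dz-x\,dy)=0$ handles $dx\otimes dy\otimes dy+dy\otimes dx\otimes dy+dy\otimes dy\otimes dx$. The two extra generators are each a difference of pure tensors with identical images under~(\ref{hom}), so they too lie in the kernel.

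For linear independence, I would read off components in the pure-tensor basis: the four positions $dx\otimes dx\otimes dx$, $dx\otimes dy\otimes dx$, $dy\otimes dx\otimes dy$, $dy\otimes dy\otimes dy$ each occur with non-zero coefficient in exactly one of the four symmetric generators and in neither of the two extras. Any dependence relation is therefore forced to have vanishing coefficients on the symmetric generators, and the two extras have disjoint pure-tensor support. Hence the six elements are linearly independent, and since they sit inside a $6$-dimensional kernel they must span it.

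The computation is pure bookkeeping; the conceptual point worth recording is that no representation-theoretic shortcut is available. Under $\mathrm{Sp}(2,\mathbb{R})=\mathrm{SL}(2,\mathbb{R})$ one has the decomposition $\bigotimes^3\Lambda_H^1\cong\bigodot^3\Lambda_H^1\oplus 2\,\Lambda_H^1$, and the target $\Lambda_H^1\otimes L$ is a single copy of the standard representation, so any equivariant map would have a $5$-dimensional kernel rather than the $6$-dimensional one found here. The extra direction reflects a genuine non-equivariance of~(\ref{hom}), originating in the Darboux-coordinate construction of $\nabla_H^{(2)}$, which is why the verification has to be carried out by hand rather than inferred from Schur's lemma.
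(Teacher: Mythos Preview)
Your first three paragraphs constitute a correct proof and match the paper's own argument exactly: the paper simply notes that (\ref{hom}) is surjective and that the listed elements are sent to zero, leaving the dimension count implicit. Your version spells out the bookkeeping more fully, but the approach is identical.

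Your final paragraph, however, is mistaken and should be deleted. The homomorphism (\ref{hom}) \emph{is} ${\mathrm{Sp}}(2,{\mathbb{R}})={\mathrm{SL}}(2,{\mathbb{R}})$-equivariant; indeed, the paper immediately goes on to give the invariant formula~(\ref{globalhom}) for it, and one can check directly that (\ref{hom}) intertwines the ${\mathrm{SL}}(2,{\mathbb{R}})$-actions. Your claim that an equivariant surjection would have a $5$-dimensional kernel is an arithmetic slip: a nonzero equivariant map from $\bigodot^3\!\Lambda_H^1\oplus 2\,\Lambda_H^1$ to a single copy of $\Lambda_H^1$ kills $\bigodot^3\!\Lambda_H^1$ and one copy of $\Lambda_H^1$, giving a kernel of dimension $4+2=6$, exactly as found. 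The two ``extra'' generators you list span precisely this surviving copy of $\Lambda_H^1$ (they are the image of $\Lambda_H^2\otimes\Lambda_H^1\hookrightarrow\bigotimes^3\!\Lambda_H^1$ via $(u\wedge w)\otimes v\mapsto u\otimes v\otimes w-w\otimes v\otimes u$), so the kernel is an ${\mathrm{SL}}(2,{\mathbb{R}})$-subrepresentation, consistent with equivariance. A representation-theoretic shortcut \emph{is} available; one just has to identify which copy of $\Lambda_H^1$ survives, and the global formula~(\ref{globalhom}) does this.
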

\begin{proof}
Clearly (\ref{hom}) is surjective and it is easy to check that the given
elements are sent to zero.
\end{proof}
Evidently, there is another way of writing this kernel:--
\begin{equation}\label{anotherway}\textstyle\big\{
P_{abc}+Q_aL_{bc}+Q_bL_{ac}+Q_cL_{ab}\in\bigotimes^3\!\Lambda_H^1,
\mbox{ such that }P_{abc}=P_{(abc)}\big\},\end{equation}
where $L_{ab}$ is the Levi form. In all dimensions, we shall write this space
as $S_\perp^3$. Lemma~\ref{Sperpthree} shows that it extends our previous
definition~(\ref{Sperpell}) and that it coincides with the definition 
(\ref{intrinsicdefinitionofSperp}) given in the introduction. The main import
of Theorem~\ref{improvedfirstthreeDcontacttheorem} stems from the kernel of the
symbol of $\widehat{D}_H$, which we have now identified as
$$\textstyle(\bigotimes^2\!\Lambda_H^1\otimes K_H)\cap(S_\perp^3\otimes E)=
(S_\perp^2\otimes K_H)\cap(S_\perp^3\otimes E)$$
just as we found for the kernel of the second prolongation in higher
dimensions~(\ref{KHprimeprime}).

Before constructing yet higher prolongations, we pause to eliminate the use of
Darboux co\"ordinates. In Lemma~\ref{contactsymbolthreeD}, we can view
$\nabla_a\mu$ as employing abstract indices in the sense of Penrose~\cite{OT}.
Thus, a section of $\Lambda_H^1$ is written as $\omega_a$ with no implied
choice of frame. Darboux co\"ordinates were used, however, to define
$\nabla_a\nabla_b\mu$. The natural remedy is to choose a partial connection on
$\Lambda_H^1$ and view $\nabla_a\nabla_b\mu$ as applying the coupled connection
on $\Lambda_H^1\otimes V$ to $\nabla_b\mu$. There are now two checks that must
be performed in order to see that
$$\textstyle S^{ab}:\bigodot^2\!\Lambda_H^1\otimes V\to W\qquad\mbox{and}
\qquad\Gamma^0:L\otimes V\to W$$
are well-defined. Firstly, if we change the partial connection on $V$ by means
of (\ref{changeonV}) for $\Xi_a\in\Lambda_H^1\otimes\End(V)$, then the second
order terms in a compatible second order operator change according to
$$S^{ab}\nabla_a\nabla_b\mu= S^{ab}(\hat\nabla_a+\Xi_a)(\hat\nabla_b+\Xi_b)\mu=
S^{ab}\hat\nabla_a\hat\nabla_b+2(S^{ab}\Xi_{a})\hat\nabla_b\mu+
(S^{ab}\hat\nabla_a\Xi_b)\mu.$$
In particular, the induced change in the first order terms is to replace
$$\Gamma^a\nabla_a\mu\quad\mbox{by}\quad
\hat\Gamma^a\hat\nabla_a\mu,\quad\mbox{where }
\hat\Gamma^a=\Gamma^a+2S^{ab}\Xi_b$$
and, hence, if we interpret the first order coefficients as specifying a
homomorphism $\Lambda^1\otimes V\to W$ then the change in such a homomorphism
is only in $\Gamma^a:\Lambda_H^1\otimes V\to W$. In summary, the composition
$$L\otimes V\to\Lambda^1\otimes V\to W$$
does not depend on the choice of partial connection on $V$ and we denote it
by~$\Gamma^0$. Secondly, we must check that the same is true if we change the
partial connection on~$\Lambda_H^1$. The general such change is
$$\hat\nabla_a\omega_b=\nabla_a\omega_b-\Omega_{ab}{}^c\omega_c\quad
\mbox{for }\Omega_{ab}{}^c\in\Lambda_H^1\otimes\End(\Lambda_H^1)$$
and $\hat\Gamma^a=\Gamma^a+S^{bc}\Omega_{bc}{}^a$ is the only change in first
order coefficients. Again $\Gamma^0$ is unaffected. Also note that, with this
interpretation, it is unnecessary that the contact manifold be $3$-dimensional.
Thus, we have proved the following.
\begin{proposition}\label{invarianceofsecondenhancedsymbol}
On a contact manifold of arbitrary dimension, a second order linear differential
operator $V\to W$ compatible with the contact structure gives rise to
invariantly defined homomorphisms
$$\textstyle S:\bigodot^2\!\Lambda_H^1\otimes V\to W\quad\mbox{and}\quad
\Gamma^0:L\otimes V\to W,$$
in other words, an enhanced symbol\/ $S_\perp^2\otimes V\to W$.
\end{proposition}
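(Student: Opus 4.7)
The plan is to mimic the $3$-dimensional computation just performed, dispensing with Darboux coordinates by substituting two auxiliary choices: a partial connection $\nabla_a$ on $V$ and one on $\Lambda_H^1$. Together these let us interpret the iterated expression $\nabla_a\nabla_b\mu$ in Penrose abstract-index form and write any second-order operator compatible with the contact structure canonically as
$$D\mu=S^{ab}\nabla_a\nabla_b\mu+\Gamma^0\nabla_0\mu+\Gamma^a\nabla_a\mu+\Theta\mu,\qquad S^{ab}=S^{(ab)},$$
where compatibility forces $S^{ab}\in\bigodot^2\!\Lambda_H^1\otimes\Hom(V,W)$. The task is to verify that the pair $(S^{ab},\Gamma^0)$ is independent of both auxiliary choices; this is Lemmas~\ref{contactsymbolthreeD} and~\ref{restriction} promoted to the present invariant setting.

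First I would alter the partial connection on $V$ by $\hat\nabla_a\mu=\nabla_a\mu-\Xi_a\mu$, with $\Xi_a\in\Lambda_H^1\otimes\End(V)$. A short expansion, using $S^{ab}=S^{(ab)}$, gives
$$S^{ab}\nabla_a\nabla_b\mu=S^{ab}\hat\nabla_a\hat\nabla_b\mu+2(S^{ab}\Xi_a)\hat\nabla_b\mu+(\text{zero-order terms}),$$
so $S^{ab}$ is preserved and the induced first-order correction $2S^{ab}\Xi_b$ takes values in $\Lambda_H^1\otimes\Hom(V,W)$. Consequently only $\Gamma^a$ can move; the $\Gamma^0$ slot is untouched.

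Second, I would alter the partial connection on $\Lambda_H^1$ by $\hat\nabla_a\omega_b=\nabla_a\omega_b-\Omega_{ab}{}^c\omega_c$ with $\Omega_{ab}{}^c\in\Lambda_H^1\otimes\End(\Lambda_H^1)$; feeding this through the second derivative produces exactly the shift $\hat\Gamma^a=\Gamma^a+S^{bc}\Omega_{bc}{}^a$, once again a $\Lambda_H^1$-valued correction leaving $\Gamma^0$ alone. Combining the two independence statements shows that $S^{ab}$ and $\Gamma^0$ are canonically attached to $D$, and the decomposition $S_\perp^2\cong\bigodot^2\!\Lambda_H^1\oplus\mathcal{L}(L)$ from~(\ref{Sperptwo}) then assembles them into a single homomorphism $S_\perp^2\otimes V\to W$, as claimed.

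The hard part is conceptual rather than computational: one must see that no admissible change of auxiliary data can ever excite a term in the transverse $L$ direction. This is precisely the ``count double'' principle for contact geometry --- the would-be coefficient $\Gamma^0$ is naturally a second-order object, so only genuinely second-order manipulations could move it, and neither of the admissible first-order changes above is of that kind. Since neither argument uses invertibility of the Levi form, the conclusion holds in arbitrary dimension, as stated.
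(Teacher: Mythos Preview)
Your proposal is correct and follows essentially the same approach as the paper: replace Darboux co\"ordinates by auxiliary partial connections on $V$ and on $\Lambda_H^1$, then perform the two independence checks (change of $\nabla_a$ on $V$ via $\Xi_a$, change of $\nabla_a$ on $\Lambda_H^1$ via $\Omega_{ab}{}^c$), observing in each case that only the $\Gamma^a$ coefficient moves while $S^{ab}$ and $\Gamma^0$ stay fixed. The computations and conclusions you give match the paper's almost line for line.
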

For use in~\S\ref{highercontact}, it is worthwhile recording the reasoning
employed in deriving (\ref{writeasfirstordersystem}) and
Proposition~\ref{invarianceofsecondenhancedsymbol} as the following.
\begin{lemma}\label{firstordertrick}
Suppose $D:V\to W$ is a second order differential operator compatible with the 
contact structure on a $3$-dimensional contact manifold. Suppose $\nabla_H$ is 
a partial connection on~$V$. Then we can find a first order operator 
${\mathcal{P}}:\Lambda_H^1\otimes V\to W$ compatible with the contact structure
and a homomorphism $\Theta:V\to W$ such that
$$D={\mathcal{P}}\circ\nabla_H+\Theta.$$
Moreover, the restricted symbol of ${\mathcal{P}}$
$$\Lambda_H^1\otimes(\Lambda_H^1\otimes V)\to W$$
coincides with the enhanced symbol of $D$
$$\textstyle\bigotimes^2\!\Lambda_H^1\otimes V=S_\perp^2\otimes V\to W.$$
\end{lemma}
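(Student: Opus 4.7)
The plan is to extend $\nabla_H$ on $V$ to a full connection $\nabla$ via Corollary~\ref{extend}, flat in contact directions, so that in local Darboux co\"ordinates with vector fields $X, Y, Z$ as in~(\ref{heisenberg}) the transverse derivative $\nabla_0\equiv\nabla_Z$ is forced to equal the commutator $\nabla_1\nabla_2-\nabla_2\nabla_1$. I will also fix an auxiliary partial connection on~$\Lambda_H^1$, so that iterated derivatives $\nabla_a\nabla_b\mu$ are invariantly defined sections of $\bigotimes^2\!\Lambda_H^1\otimes V$. Since $D$ is compatible with the contact structure, its principal symbol factors through $\bigodot^2\!\Lambda_H^1\otimes V\to W$; replacing every $\nabla_0^2$ and $\nabla_0\nabla_a-\nabla_a\nabla_0$ term by the appropriate curvature contribution (which is at most first order), I may accordingly write
$$D\mu = S^{ab}\nabla_a\nabla_b\mu + \Gamma^0\nabla_0\mu + \Gamma^a\nabla_a\mu + \Theta\mu,$$
with $a,b\in\{1,2\}$ and $S^{ab}=S^{(ab)}$, exactly as in~(\ref{writeasfirstordersystem}) and Proposition~\ref{invarianceofsecondenhancedsymbol}.

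The key step is to exploit $\nabla_0\mu=\nabla_1\nabla_2\mu-\nabla_2\nabla_1\mu$ to absorb the $\Gamma^0\nabla_0$ term into the second-order contact derivatives, at the cost of sacrificing symmetry of the coefficient tensor. Explicitly, I define $A^{ab}$ by $A^{(ab)}=S^{ab}$ with the antisymmetric part chosen so that $A^{12}\nabla_1\nabla_2+A^{21}\nabla_2\nabla_1=S^{12}(\nabla_1\nabla_2+\nabla_2\nabla_1)+\Gamma^0\nabla_0$. Then
$$D\mu = A^{ab}\nabla_a\nabla_b\mu+\Gamma^a\nabla_a\mu+\Theta\mu,$$
and the first-order operator
$$\mathcal{P}\rho\equiv A^{ab}\nabla_a\rho_b+\Gamma^a\rho_a,\quad\rho\in\Gamma(\Lambda_H^1\otimes V),$$
only ever differentiates in contact directions and is hence compatible with the contact structure. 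Setting $\rho=\nabla_H\mu$ makes $\rho_a=\nabla_a\mu$ and yields $\mathcal{P}(\nabla_H\mu)+\Theta\mu=D\mu$, as required.

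For the symbol claim, observe that in dimension~$3$ the Levi form is an isomorphism $\mathcal{L}:L\to\Lambda_H^2$, whence
$$\textstyle\Lambda_H^1\otimes\Lambda_H^1=\bigodot^2\!\Lambda_H^1\oplus\Lambda^2\Lambda_H^1\cong\bigodot^2\!\Lambda_H^1\oplus L=S_\perp^2,$$
matching (\ref{Sperptwo}). The restricted symbol of~$\mathcal{P}$ is precisely the homomorphism $A^{ab}:\Lambda_H^1\otimes\Lambda_H^1\otimes V\to W$; under this decomposition its symmetric part is $S^{ab}$ and its antisymmetric part corresponds to $\Gamma^0:L\otimes V\to W$. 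By Proposition~\ref{invarianceofsecondenhancedsymbol} the pair $(S,\Gamma^0)$ is exactly the enhanced symbol of $D$, so the two symbols agree. The main point requiring care is that the Darboux construction, which is manifestly local, assembles into a globally defined first order $\mathcal{P}$ on $\Lambda_H^1\otimes V$, and that the identification of the antisymmetric part of $A^{ab}$ with $\Gamma^0$ uses the \emph{same} Levi-form isomorphism underlying the definition of~$S_\perp^2$; both of these follow from the invariance already established in Proposition~\ref{invarianceofsecondenhancedsymbol}.
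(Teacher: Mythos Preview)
Your proof is correct and follows essentially the same approach as the paper, which simply records the reasoning already given in deriving~(\ref{writeasfirstordersystem}) and Proposition~\ref{invarianceofsecondenhancedsymbol}. The only cosmetic difference is that the paper keeps the operator in the form ${\mathcal{P}}\rho=S^{ab}\nabla_a\rho_b+\Gamma^0(\nabla_1\rho_2-\nabla_2\rho_1)+\Gamma^a\rho_a$ with $S^{ab}$ symmetric and the $\Gamma^0$ term displayed separately, whereas you repackage $S^{ab}$ and $\Gamma^0$ into a single non-symmetric coefficient tensor~$A^{ab}$; these are identical expressions.
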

Now that we know by Proposition~\ref{invarianceofsecondenhancedsymbol} that the
homomorphisms $S$ and $\Gamma^0$ are invariantly defined, we may compute them
for the operator
\begin{equation}\label{delsquaredE}
\nabla_H^{(2)}:\Lambda_H^1\otimes E\to\Lambda_H^1\otimes L\otimes E
\end{equation}
on a $3$-dimensional contact manifold by using Darboux co\"ordinates. We
already did this in deriving (\ref{hom}) and the following proposition simply
writes the result in a globally well-defined manner.
\begin{proposition}\label{calculateenhancedsymbol} Let
$\Sigma:\bigotimes^3\!\Lambda_H^1\to\Lambda_H^1\otimes L$ denote the operator
\begin{equation}\label{globalhom}
\textstyle\bigotimes^3\!\Lambda_H^1\ni\phi_{abc}\longmapsto
L^{ab}\big(\phi_{abc}-\phi_{cab}\big)\in\Lambda_H^1\otimes L.\end{equation}
Then the enhanced symbol of the operator {\rm(\ref{delsquaredE})} is
$$\textstyle S_\perp^2\otimes\Lambda_H^1\otimes E=
\bigotimes^3\!\Lambda_H^1\otimes E
\xrightarrow{\,\Sigma\otimes{\mathrm{Id}}\,}\Lambda_H^1\otimes L\otimes E.$$
\end{proposition}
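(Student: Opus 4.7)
The plan is to deduce Proposition~\ref{calculateenhancedsymbol} from the local Darboux computation already carried out in~(\ref{hom}), combined with the invariance established in Proposition~\ref{invarianceofsecondenhancedsymbol}.

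First I would record that in three dimensions the Levi form $\mathcal{L}:L\to\Lambda_H^2$ is an isomorphism, so that $S_\perp^2=\bigodot^2\!\Lambda_H^1\oplus\mathcal{L}(L)\cong\bigotimes^2\!\Lambda_H^1$. Tensoring with $\Lambda_H^1$ gives the canonical identification $S_\perp^2\otimes\Lambda_H^1\otimes E\cong\bigotimes^3\!\Lambda_H^1\otimes E$ appearing in the statement; without this identification the proposition would not even typecheck.

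By Proposition~\ref{invarianceofsecondenhancedsymbol}, the enhanced symbol of the compatible second order operator~(\ref{delsquaredE}) is an invariantly defined bundle homomorphism $\bigotimes^3\!\Lambda_H^1\otimes E\to\Lambda_H^1\otimes L\otimes E$. Since $E$ enters only as a passenger, it suffices to identify the underlying homomorphism $\bigotimes^3\!\Lambda_H^1\to\Lambda_H^1\otimes L$, and by invariance I am free to work in the Darboux frame in which~(\ref{hom}) was derived.

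The remaining step is to check that the proposed formula $\Sigma:\phi_{abc}\mapsto L^{ab}(\phi_{abc}-\phi_{cab})$ reproduces~(\ref{hom}) in this frame. Fixing $L_{ab}$ as the representative of $-dx\wedge dy\in\Lambda_H^2$, one computes its inverse $L^{ab}$ from $L^{ab}L_{ac}=\delta^b{}_c$, and then substitutes each of the eight basis tensors $dx\otimes dx\otimes dx$, $dx\otimes dx\otimes dy,\ldots,dy\otimes dy\otimes dy$ into~$\Sigma$. Matching the resulting eight values against the eight entries of~(\ref{hom}) completes the proof, since two bundle homomorphisms that agree on a basis agree identically. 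The only obstacle is sign bookkeeping for $L_{ab}$ and $L^{ab}$; once the conventions are pinned down the verification is a direct eight-line check.
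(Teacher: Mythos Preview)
Your proposal is correct and follows essentially the same approach as the paper: the paper states just before the proposition that, since Proposition~\ref{invarianceofsecondenhancedsymbol} guarantees invariance of the enhanced symbol, one may compute it in Darboux co\"ordinates, that this computation was already carried out in deriving~(\ref{hom}), and that the proposition ``simply writes the result in a globally well-defined manner.'' Your plan makes the final verification (matching $\Sigma$ against~(\ref{hom}) on basis tensors) explicit, which the paper leaves to the reader.
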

\begin{remark} One can readily verify that the kernel of (\ref{globalhom}) is
$S_\perp^3$, as expected. Indeed, if we regard the homomorphism $\Sigma$ as
$$\textstyle\Sigma^{ab}:\bigotimes^2\!\Lambda_H^1\to
\Hom(\Lambda_H^1,\Lambda_H^1\otimes L),$$
then Proposition~\ref{calculateenhancedsymbol} implies that we may write the
operator ${\mathcal{P}}$ as
\begin{equation}\label{writeP}
{\mathcal{P}}\rho=\Sigma^{ab}\nabla_a\rho_b+\Gamma^a\rho_a\end{equation}
and so the partial symbol of
${\mathcal{P}}$ is
$(\Sigma\otimes{\mathrm{Id}})|_{\Lambda_H^1\otimes\Lambda_H^1\otimes K_H}$
with kernel
$$(\Lambda_H^1\otimes\Lambda_H^1\otimes K_H)\cap
\ker(\Sigma\otimes{\mathrm{Id}})=
(\Lambda_H^1\otimes\Lambda_H^1\otimes K_H)\cap(S_\perp^3\otimes E).$$
\end{remark}
\begin{remark} Contact geometry is often developed by supposing that the
bundle $L$ is trivial. A trivialising section $\alpha$ is then referred to as a
{\em contact form\/}. Such a contact form gives rise to a preferred vector
field $Z$ transverse to the contact distribution and characterised by
$Z\intprod\alpha=1$ and $Z\intprod d\alpha=0$. It is called the {\em Reeb\/}
vector field. In Darboux co\"ordinates on a $3$-dimensional contact manifold
$Z=\partial/\partial z$. We obtain an alternative global point of view in
which~$\nabla_0=Z\intprod\nabla$.
\end{remark}
\begin{remark} Although an unnecessary restriction in choosing a partial
connection on $\Lambda_H^1$ above, it is interesting to note that there are
preferred connections having a convenient relationship with the Levi form as follows
(cf.~\cite[Proposition~4.2.1]{thebook}). We work in arbitrary dimension. Let us
say a linear connection $\nabla$ on the tangent bundle $TM$ to our contact
manifold $M$ is {\em adapted\/} if it preserves the distribution $H$. Adapted
connections can be constructed by splitting the sequence
\begin{equation}\label{split}
 0\to H\to TM\to L^*\to 0
\end{equation}
and choosing separate connections on $H$ and~$L$. An adapted connection
$\nabla$ on $TM$ restricts to a partial connection~$\nabla_H$. We shall use 
the same notation for the induced connections and partial connections on $H$ 
and~$L$.

Let $T^\nabla$ be the torsion of an adapted connection $\nabla$ on~$TM$. {From}
the formula
$$T^\nabla(X,Y)=\nabla_XY-\nabla_YX-[X,Y],\enskip\mbox{for }X,Y,\in\Gamma(TM)$$
it follows that if $X,Y\in\Gamma(H)$, then $T(X,Y)\bmod H$ is precisely
$-{\mathcal{L}}(X,Y)$, where ${\mathcal{L}}$ is the Levi form. In particular,
adapted connections cannot be torsion free. On the other hand, there is a
related and well-suited condition available in the presence of a splitting of
the sequence~(\ref{split}), equivalently a splitting of the first column of the
diagram (\ref{deRham}). Then we may take the Levi-form ${\mathcal{L}}(~,~)$ to
be $TM$-valued and define
$$T_H^\nabla\equiv T^\nabla+{\mathcal{L}},$$  
as a section of $\Lambda^2 \otimes TM$. For this 
{\em adapted torsion}, if $X,Y\in \Gamma (H)$, then we have 
$T_H^{\nabla}(X,Y)\in \Gamma(H)$. In fact we shall be mainly interested in this
part of $T_H$ so let us write $\tau^\nabla$ for the restriction of $T_H^\nabla$
to $H\wedge H$ and call it {\em partial torsion}.

Note that if we modify $\nabla$ to $\nabla'$, so that the difference
$\nabla-\nabla'$ is $\frac{1}{2}T_H^\nabla$, then $\nabla'$ is again adapted
but is also adapted torsion free, i.e.\ $T_H^{\nabla'}=0$. The full torsion of
$\nabla'$ is then $-{\mathcal{L}}$.  In particular, $\nabla'$ is partially
torsion-free, i.e.~ $\tau^{\nabla'}=0$.

Let $\nabla'$ be any adapted connection such that $\tau^{\nabla'}=0$
and let us write $R^{\nabla'}$ for its curvature, and $\nabla_H'$ for
the associated partial connection on $TM$. By Proposition
\ref{modcons} (with $TM$ as $E$ there) there is modification
$\nabla''$ of $\nabla$, so that $\nabla'_H$ remains fixed, and so that
$(q\otimes Id) \circ R^{\nabla''}$ is a section of
$\Lambda_{H\perp}^2\otimes\End (TM) $. From the proof of that
proposition (in particular that the $\Phi$ involved takes values in
$L\otimes TM$) it follows at once that $\tau^{\nabla'}$ is a property
of $\nabla'_H$, whence $\tau^{\nabla''}=\tau^{\nabla'}=0$. Using also
how the uniqueness statement is established in the proposition, it
follows easily that $\nabla''$ preserves $H$. Using this and the
uniqueness statement itself we have, in summary, the following.
\begin{proposition} \label{Tconn}
Given a contact distribution $H$ and a splitting of the short exact 
sequence~{\rm(\ref{split})}, there is a partial connection
$\nabla_H$ on $TM$ with partial torsion zero,
i.e.\ $\tau^{\nabla_H}=0$. The connection $\nabla_H$ admits a unique
extension to a connection $\nabla$ on $TM$ characterised by the property 
that 
$$
(q\otimes Id) \circ R^{\nabla} 
$$ is a section of $\Lambda_{H\perp}^2\otimes\End (TM) $. The
connection $\nabla$ preserves $H$ and, viewed as a connection on $H$,
is the (unique) extension of the partial connection $\nabla_H$ on $H$,
as given by Proposition \ref{modcons}.
\end{proposition}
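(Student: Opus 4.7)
The plan has three parts: construct a partial connection $\nabla_H$ on $TM$ with vanishing partial torsion; apply Proposition \ref{modcons} to obtain a canonical extension $\nabla$; and verify that $\nabla$ is automatically adapted. The material leading up to the proposition essentially dictates the first two parts; the main obstacle is verifying compatibility with $H$ in the third step.

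For the first part, pick any adapted connection $\tilde\nabla$ on $TM$---these exist because the splitting of~(\ref{split}) allows one to assemble separate connections on $H$ and $L^{*}$---and set $\nabla' \equiv \tilde\nabla - \tfrac{1}{2} T_H^{\tilde\nabla}$. As noted immediately before the proposition, $\nabla'$ is adapted and satisfies $T_H^{\nabla'} = 0$, hence $\tau^{\nabla'} = 0$. Take $\nabla_H$ to be the partial connection on $TM$ underlying $\nabla'$; since $\tau^{\nabla'}$ depends only on the partial connection, $\tau^{\nabla_H} = 0$ as well. For the second part, I apply Proposition \ref{modcons} with $E = TM$ to this partial connection, obtaining a unique extension $\nabla$ characterised by $(s \otimes \mathrm{Id})(q \otimes \mathrm{Id}) R^{\nabla} = 0$. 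Since $s : \Lambda_H^2 \to L$ splits $\mathcal{L}$, its kernel is precisely the subbundle identified with $\Lambda_{H\perp}^2$, so this condition is exactly the stated requirement that $(q \otimes \mathrm{Id}) R^{\nabla}$ be a section of $\Lambda_{H\perp}^2 \otimes \End(TM)$.

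The main obstacle is to show $\nabla$ preserves $H$. Here I would exploit how the uniqueness in the proof of Proposition \ref{modcons} is established: starting from the adapted $\nabla'$ above, the uniquely determined correction is a homomorphism $\Phi : TM \to L \otimes TM$ read off from $(s \otimes \mathrm{Id})(q \otimes \mathrm{Id}) R^{\nabla'}$. Because $\nabla'$ preserves $H$, its curvature also preserves $H$, so $(q \otimes \mathrm{Id}) R^{\nabla'}$ restricted to $H$ takes values in $\Lambda_H^2 \otimes H$, whence $\Phi|_H$ takes values in $L \otimes H$. Consequently, for $Y \in \Gamma(H)$ and any $X$, one has $(\nabla' + \Phi)_X Y = \nabla'_X Y + \Phi(Y)(X) \in \Gamma(H)$, so $\nabla = \nabla' + \Phi$ preserves $H$. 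Finally, the induced connection on $H$ extends the partial connection $\nabla_H|_H$ and inherits the curvature-vanishing condition, so by the uniqueness clause of Proposition \ref{modcons} applied now with $E = H$, it coincides with the extension of $\nabla_H|_H$ provided there.
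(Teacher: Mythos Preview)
Your proof is correct and follows essentially the same approach as the paper's argument (given in the remark immediately preceding the proposition): construct an adapted $\nabla'$ with vanishing partial torsion, apply Proposition~\ref{modcons}, and then deduce that the resulting extension preserves~$H$. Your treatment of the third step is in fact more explicit than the paper's, which simply asserts that ``using also how the uniqueness statement is established in the proposition, it follows easily that $\nabla''$ preserves~$H$''; you spell out that the correction $\Phi$ is read off from $(s\otimes\mathrm{Id})(q\otimes\mathrm{Id})R^{\nabla'}$ and hence inherits $H$-preservation from the curvature of the adapted~$\nabla'$.
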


Finally concerning adapted connections, let us note that if $L$ is trivial (in
which case a splitting of (\ref{split}) can be obtained from the Reeb field
associated to any trivialising section) then one can also arrange that the
induced connection on $L$ is flat. If we work locally, then the best we can do
for an adapted connection is to construct a flat connection $\partial$ with
torsion from Darboux local co\"ordinates. Using abstract indices $a,b,\ldots$ 
to adorn sections of $\Lambda_H^1$ and the index $0$ to indicate a section of
the line bundle~$L$ (now trivialised), we have a flat connection with
\begin{equation}\label{Darbouxconnection}
\nabla_{[a}\nabla_{b]}f=-L_{ab}\nabla_0f,\qquad
\mbox{for all smooth functions~$f$}\end{equation}
and we shall refer to it as a {\em Darboux connection}.
\end{remark}

The main remaining task in this subsection is to construct higher prolongations
of a first order operator $D:E\to F$ compatible with a $3$-dimensional contact
structure. Unfortunately, for this task it is not sufficient to modify and
iterate Theorem~\ref{improvedfirstthreeDcontacttheorem} as one might expect.
The problem is that
$$\textstyle(\bigotimes^2\!\Lambda_H^1\otimes S_\perp^3)\cap
(S_\perp^3\otimes\bigotimes^2\!\Lambda_H^1)\nsupseteq S_\perp^5$$
whereas we shall soon see that $S_\perp^5$ is attained with a more efficient
prolongation. But firstly, we shall encounter $S_\perp^4$ as follows. Write
$K_H^\prime\equiv\Lambda_H^1\otimes K_H$ and $\partial_H$ for the composition
$$\textstyle\Lambda_H^1\otimes K_H^\prime=
\bigotimes^2\!\Lambda_H^1\otimes K_H\hookrightarrow
\bigotimes^3\!\Lambda_H^1\otimes E\xrightarrow{\,\Sigma\otimes{\mathrm{Id}}\,}
\Lambda_H^1\otimes L\otimes E.$$
It is the partial symbol of
${\mathcal{P}}:K_H^\prime\to\Lambda_H^1\otimes L\otimes E$ with kernel
$$K_H^{\prime\prime}\equiv
(\Lambda_H^1\otimes\Lambda_H^1\otimes K_H)\cap(S_\perp^3\otimes E).$$
Write $F_H^{\prime\prime}$ for the subbundle
$$\begin{array}c\Lambda_H^1\otimes E\\ \oplus\\ \Lambda_H^1\otimes K_H\\
\oplus\\ \partial_H(\Lambda_H^1\otimes K_H^\prime)
\end{array}\subseteq
\begin{array}c\Lambda_H^1\otimes E\\ \oplus\\ \Lambda_H^1\otimes K_H\\
\oplus\\ \Lambda_H^1\otimes L\otimes E\end{array},$$
recall the definition of $E_H^{\prime\prime}$ in (\ref{defDhat}), choose a
splitting $\delta_H$ of
$$\partial_H(\Lambda_H^1\otimes K_H^\prime)
\hookrightarrow\Lambda_H^1\otimes L\otimes E,$$
and consider the operator obtained from $\widehat{D}_H$ by using this
splitting to delete some of the consistency equations, namely
$$D_H^{\prime\prime}:E_H^{\prime\prime}\to F_H^{\prime\prime}
\quad\mbox{defined by}\quad
\left[\begin{array}c\sigma\\ \mu\\ \rho\end{array}\right]\longmapsto
\left[\begin{array}c\nabla_H\sigma-\mu\\ \nabla_H\mu-\rho\\
\delta_H({\mathcal{P}}\rho+\Theta\mu-\kappa_H\sigma)\end{array}\right].$$
The following theorem is a true analogue of Theorems~\ref{secondtheorem}
and~\ref{secondcounterpart} (note that $D_H^{\prime\prime}$ is evidently
compatible with the contact structure and has surjective symbol).
\begin{theorem}
The projection $E_H^{\prime\prime}\to E$ induces an isomorphism
$$\{\Sigma\in\Gamma(E_H^{\prime\prime})\mbox{\rm\ s.t.\ }
D_H^{\prime\prime}\Sigma=0\}\cong \{\sigma\in\Gamma(E)
\mbox{\rm\ s.t.\ }D\sigma=0\}.$$
\end{theorem}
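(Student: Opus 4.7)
The plan is to reduce the statement to Theorem~\ref{improvedfirstthreeDcontacttheorem} by showing that solutions of $D_H^{\prime\prime}\Sigma=0$ coincide with those of $\widehat{D}_H\Sigma=0$ on $E_H^{\prime\prime}$. One direction is immediate: $D_H^{\prime\prime}$ is obtained from $\widehat{D}_H$ by composing only the third component with the splitting~$\delta_H$, so every $\Sigma$ killed by $\widehat{D}_H$ is trivially killed by~$D_H^{\prime\prime}$.

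For the converse, suppose $\Sigma=(\sigma,\mu,\rho)$ satisfies $D_H^{\prime\prime}\Sigma=0$. The first two components of $D_H^{\prime\prime}$ and $\widehat{D}_H$ agree, so already $\mu=\nabla_H\sigma$ and $\rho=\nabla_H\mu$. The key observation --- exactly parallel to the passage from Theorem~\ref{firsttheorem} to Theorem~\ref{secondtheorem} in the general setting --- is that these two relations alone force the full third equation ${\mathcal{P}}\rho+\Theta\mu-\kappa_H\sigma=0$, and not merely its $\delta_H$-image. Indeed, by the decomposition $\nabla_H^{(2)}={\mathcal{P}}\circ\nabla_H+\Theta$ supplied by Lemma~\ref{firstordertrick} (exactly as used in deriving~(\ref{writeasfirstordersystem})), we have ${\mathcal{P}}\rho+\Theta\mu={\mathcal{P}}(\nabla_H\mu)+\Theta\mu=\nabla_H^{(2)}\mu$, and substituting $\mu=\nabla_H\sigma$ together with the identity $\nabla_H^{(2)}\nabla_H\sigma=\kappa_H\sigma$ that underpins the proof of Theorem~\ref{firstthreeDcontacttheorem} yields ${\mathcal{P}}\rho+\Theta\mu=\kappa_H\sigma$, as required.

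Hence $\widehat{D}_H\Sigma=0$, and Theorem~\ref{improvedfirstthreeDcontacttheorem} then identifies the solution space of $D_H^{\prime\prime}$ with $\{\sigma\in\Gamma(E)\mbox{ s.t.\ }D\sigma=0\}$ via the canonical projection $E_H^{\prime\prime}\to E$. The only step requiring genuine attention is verifying that the third equation is a consequence of the first two; but this is precisely the mechanism by which $\widehat{D}_H$ was constructed to record the compatibility consequences of $\mu=\nabla_H\sigma$, so the choice of splitting $\delta_H$ is inconsequential --- the data it discards was never independent information in solutions of the prolonged system, and no new obstacle appears beyond the bookkeeping already executed in establishing Theorem~\ref{improvedfirstthreeDcontacttheorem}.
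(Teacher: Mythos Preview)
Your proof is correct and follows exactly the approach the paper intends: the paper's own proof is the single line ``Immediate from Theorem~\ref{improvedfirstthreeDcontacttheorem},'' and you have faithfully unpacked why---namely, that the third component of $\widehat{D}_H$ was introduced solely as a differential consequence of the first two, so discarding part of it via $\delta_H$ does not affect the solution set. Your explicit verification that $\mu=\nabla_H\sigma$ and $\rho=\nabla_H\mu$ already force ${\mathcal{P}}\rho+\Theta\mu=\kappa_H\sigma$ is precisely the content of that one-line appeal.
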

\begin{proof} Immediate from Theorem~\ref{improvedfirstthreeDcontacttheorem}.
\end{proof}
Let us now lift the splitting $\delta_H$ so that it maps to~$\Lambda^1\otimes
K_H^\prime$. In other words, let us consider the exact sequence
$$0\to K_H^{\prime\prime}\to\Lambda_H^1\otimes K_H^\prime
\xrightarrow{\,\partial_H\,}\Lambda_H^1\otimes L\otimes E$$
and choose $\delta_H:\Lambda_H^1\otimes K_H^\prime\longleftarrow
\Lambda_H^1\otimes L\otimes E$ such that
$\partial_H\delta_H\partial_H=\partial_H$ and
$\delta_H\partial_H\delta_H=\delta_H$. Then we can rewrite the kernel of
$D_H^{\prime\prime}$ as the system of equations
\begin{equation}\label{rewritten}
\begin{array}{rcl}\nabla_H\sigma&=&\mu\\ \nabla_H\mu&=&\rho\\
\delta_H{\mathcal{P}}\rho&=&
\delta_H\kappa_H\sigma-\delta_H\Theta\mu\enskip\bmod K_H^{\prime\prime}.
\end{array}\end{equation}
Consider the operator
$$K_H^\prime\ni\rho\longmapsto
\delta_H{\mathcal{P}}\rho\enskip\bmod K_H^{\prime\prime}\in
\frac{\Lambda_H^1\otimes K_H^\prime}{K_H^{\prime\prime}}.$$
Its symbol is surjective by design. Therefore, there is a partial connection
$\nabla_H$ on $K_H^\prime$ such that
\begin{equation}\label{newpartialconnection}
\delta_H{\mathcal{P}}\rho=\nabla_H\rho\enskip\bmod K_H^{\prime\prime}
\end{equation}
and we may rewrite the last equation of (\ref{rewritten}) as
\begin{equation}\label{lastrewritten}
\nabla_H\rho=\delta_H\kappa_H\sigma-\delta_H\Theta\mu+\tau\quad
\mbox{for some }\tau\in\Gamma(K_H^{\prime\prime}).\end{equation}
Note that previously, in order to define~${\mathcal{P}}$, we already chose a
partial connection on $\Lambda_H^1$ and hence on
$K_H^\prime=\Lambda_H^1\otimes K_H$ but henceforth we shall always prefer to
use our new partial connection chosen so that (\ref{newpartialconnection})
holds. More concretely, we are choosing the partial connection on $K_H^\prime$
so as to eliminate the effect of the $\Gamma^a$ terms in~(\ref{writeP}).

Now consider the equation $\nabla_H\mu=\rho$ from (\ref{rewritten}). As usual,
Corollary~\ref{extend} extends the partial connection $\nabla_H$
to a full connection on $K_H$ whose curvature $\kappa_H^\prime$ appears as the
composition
\begin{equation}\label{kappaHprime}
K_H\xrightarrow{\,\nabla_H\,}\Lambda_H^1\otimes K_H
\xrightarrow{\,\nabla_H^{(2)}\,}\Lambda_H^1\otimes L\otimes K_H.\end{equation}
Therefore, we may add another equation
$$\nabla_H^{(2)}\rho=\kappa_H^\prime\mu$$
to the system~(\ref{rewritten}) without disturbing its solutions. As we did
with the equation $\nabla_H^{(2)}\mu=\kappa_H\sigma$
in~(\ref{writeasfirstordersystem}), we may write this second order equation as
a first order system
\begin{equation}\label{systemforrho}
\begin{array}{rcl}\nabla_H\rho&=&\nu\\
{\mathcal{Q}}\nu+\Omega\rho&=&\kappa_H^\prime\mu\end{array}\end{equation}
where
\begin{itemize}
\item $\nabla_H$ is our preferred partial connection on $K_H^\prime$;
\item
${\mathcal{Q}}:\Lambda_H^1\otimes K_H^\prime\to\Lambda_H^1\otimes L\otimes K_H$
is a first order differential operator compatible with the contact structure
and whose partial symbol is
$$\textstyle\Lambda_H^1\otimes\Lambda_H^1\otimes K_H^\prime
=\bigotimes^3\!\Lambda^1\otimes K_H
\xrightarrow{\,\Sigma\otimes{\mathrm{Id}}\,}
\Lambda_H^1\otimes L\otimes K_H$$
with kernel $S_\perp^3\otimes K_H$;
\item $\Omega:K_H^\prime\to\Lambda_H^1\otimes L\otimes K_H$ is some
homomorphism.
\end{itemize}
We may write ${\mathcal{Q}}$ explicitly as
\begin{equation}\label{thisiscalQ}
{\mathcal{Q}}\nu=(\Sigma\otimes{\mathrm{Id}})\nabla_H\nu+\Gamma\nu
\end{equation}
where the partial connection $\nabla_H$ on $\Lambda_H^1\otimes K_H^\prime$ is
induced by choosing any partial connection on $\Lambda_H^1$ and
$\Gamma:\Lambda_H^1\otimes K_H^\prime\to\Lambda_H^1\otimes L\otimes K_H$ is
some homomorphism. But instead of the system (\ref{systemforrho}) we may
substitute from (\ref{lastrewritten}) to eliminate $\nu$ and obtain
\begin{equation}\label{twoequations}
\begin{array}{rcl}
\nabla_H\rho&=&\delta_H\kappa_H\sigma-\delta_H\Theta\mu+\tau\\
{\mathcal{Q}}(\tau+\delta_H\kappa_H\sigma-\delta_H\Theta\mu)
&=&\kappa_H^\prime\mu-\Omega\rho.
\end{array}\end{equation}
If we regard $\delta_H\kappa_H\sigma$ as $(\delta_H\kappa_H)\intprod\sigma$
obtained by pairing
$$\delta_H\kappa_H\in\Hom(E,\Lambda_H^1\otimes K_H^\prime)
=\Gamma(\Lambda_H^1\otimes K_H^\prime\otimes E^*)\quad
\mbox{with}\quad\sigma\in\Gamma(E),$$
then we may use the Leibniz rule to write
$$\nabla_H(\delta_H\kappa_H\sigma)=
(\nabla_H(\delta_H\kappa_H))\sigma+\nabla_H\sigma\intprod(\delta_H\kappa_H)$$
and, furthermore, substitute from (\ref{rewritten}) to obtain
$$\nabla_H(\delta_H\kappa_H\sigma)=
(\nabla_H(\delta_H\kappa_H))\sigma+\mu\intprod(\delta_H\kappa_H).$$
Similarly,
$$\nabla_H(\delta_H\Theta\mu)=
(\nabla_H(\delta_H\Theta))\mu+\rho\intprod(\delta_H\Theta).$$
In other words, these are known linear expressions in $\sigma,\mu,\rho$.
Bearing in mind the formula (\ref{thisiscalQ}) for ${\mathcal{Q}}$, the same
conclusion applies to ${\mathcal{Q}}(\delta_H\kappa_H\sigma)$ and
${\mathcal{Q}}(\delta_H\Theta\mu)$. Hence, we may combine (\ref{rewritten})
with (\ref{twoequations}) to conclude that our original equation $D\sigma=0$ is
equivalent to the prolonged system
\begin{equation}\label{finalprolongation}
\begin{array}{rcl}\nabla_H\sigma&=&\mu\\ \nabla_H\mu&=&\rho\\
\nabla_H\rho&=&\delta_H\kappa_H\sigma-\delta_H\Theta\mu+\tau\\
{\mathcal{Q}}\tau&=&L(\sigma,\mu,\rho)
\end{array}\end{equation}
for some explicit linear function $L(\sigma,\mu,\rho)$ defined in terms of the
chosen connections on the three vector bundles $E,K_H,K_H^\prime$. Recall that
$\sigma,\mu,\rho,\tau$ are sections of the bundles
$E,K_H,K_H^\prime,K_H^{\prime\prime}$, respectively. The operator
${\mathcal{Q}}$ is initially defined on $\Lambda_H^1\otimes K_H^\prime$
but in (\ref{finalprolongation}) we see that its action is confined to
$K_H^{\prime\prime}\subset\Lambda^1\otimes K_H^\prime$. As such, its partial
symbol therefore has as its kernel
$$K_H^{\prime\prime\prime}\equiv
(S_\perp^3\otimes K_H)\cap(\Lambda_H^1\otimes K_H^{\prime\prime})=
(S_\perp^3\otimes K_H)\cap(S_\perp^4\otimes E),$$
where
$$S_\perp^4=(\Lambda_H^1\otimes S_\perp^3)\cap(S_\perp^3\otimes\Lambda_H^1)$$
as is the case in dimension~$\geq 5$. Furthermore, is straightforward to verify
that (\ref{exdecomp}) also holds in dimension~$3$. Hence, although the method
of building prolongations on a contact manifold is quite different in
dimension~$3$, the criteria for being of finite-type are identical so far. This
phenomenon continues for higher prolongations. Although rather complicated in
practise, it is clear enough how to continue with higher order prolongations in
principle. With reference to the prolonged system~(\ref{finalprolongation}), at
the next stage one rewrites ${\mathcal{Q}}\tau$ at the expense of introducing a
partial connection on $K_H^{\prime\prime}$, a suitable splitting~$\delta_H$,
and a new variable taking values in~$K_H^{\prime\prime\prime}$. There is a
second order constraint
$$\nabla_H^{(2)}(\delta_H\kappa_H\sigma-\delta_H\Theta\mu+\tau)=
\kappa_H^{\prime\prime}\rho$$
obtained from the third equation in (\ref{finalprolongation}) where
$\kappa_H^{\prime\prime}$ is the curvature arising from our chosen partial
connection on~$K_H^\prime$. As usual, one rewrites this as a first order system
using the various partial connections and the Leibniz rule, organising the
result in terms of a first order operator ${\mathcal{R}}$
on~$K_H^{\prime\prime\prime}$. The details are left to the reader. In fact,
this scheme is obtained by taking the deceptively simple iterative scheme from
\S\ref{general}, writing it out in detail, and then making adjustments to
account for the relevant integrability conditions in the Rumin complex being of
second order in dimension~$3$. An iterative scheme in dimension $3$ is
presented in~\S\ref{highercontact}. For convenience we record the final
conclusion in all dimensions as follows.

\begin{theorem}\label{finalfirstordercontacttheorem} Suppose that $D:E\to F$ is
a first order linear differential operator between smooth vector bundles on a
contact manifold. Suppose its symbol $\Lambda^1\otimes E\to F$ is surjective
and descends to a homomorphism $\Lambda_H^1\otimes E\to F$ whose kernel we
shall denote by~$K_H$. Let
$$\textstyle S_\perp^\ell\cong
\bigodot^\ell\!\Lambda_H^1\oplus\bigodot^{\ell-2}\!\Lambda_H^1\oplus
\bigodot^{\ell-4}\!\Lambda_H^1\oplus\cdots\subset\bigotimes^\ell\!\Lambda_H^1$$
be defined in terms of the Levi form $L_{ab}$ by\/ {\rm(\ref{Sperptwo}),
(\ref{anotherway}), (\ref{exdecomp})}, and generally 
by\/~{\rm(\ref{intrinsicdefinitionofSperp})}. Suppose
$$K_H^\ell\equiv(S_\perp^\ell\otimes K_H)\cap(S_\perp^{\ell+1}\otimes E)$$
are vector bundles for all~$\ell$ (we say that $D$ is `regular') and that
$K_H^\ell=0$ for $\ell$ sufficiently large (we say that $D$ is `finite-type').
Then there is a partial connection $\nabla_H$ on the bundle
$$\textstyle{\mathbb{T}}\equiv E\oplus K_H\oplus
\bigoplus_{\ell\geq 1}\!K_H^\ell$$
such that taking the first component ${\mathbb{T}}\to E$ induces an isomorphism
$$\{\Sigma\in\Gamma({\mathbb{T}})\mbox{\rm\ s.t.\ }\nabla_H\Sigma=0\}\cong
\{\sigma\in\Gamma(E)\mbox{\rm\ s.t.\ }D\sigma=0\}.$$
In particular, the solution space of $D$ is finite-dimensional with dimension
bounded by the rank of\/~${\mathbb{T}}$.
\end{theorem}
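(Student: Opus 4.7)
My approach is to combine the iteration of Theorem~\ref{secondcounterpart} in dimension $\geq 5$ with the more elaborate three-dimensional scheme culminating in~(\ref{finalprolongation}). In either case the strategy is: given a first-order contact-compatible operator with surjective partial symbol and kernel-of-symbol $K_H$, produce a new first-order contact-compatible operator on $E \oplus K_H$ whose partial symbol is surjective and whose kernel-of-symbol is precisely $(S_\perp^1\otimes K_H)\cap(S_\perp^2\otimes E) = K_H^1$; repeat, at stage $\ell$ obtaining a kernel-of-symbol equal to $K_H^\ell$; and invoke the finite-type hypothesis to terminate the process at the stage where the symbol vanishes, yielding a partial connection on $\mathbb{T}$.

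For dimensions $2n+1 \geq 5$, I would proceed directly by induction on~$\ell$. Theorem~\ref{secondcounterpart} is the base case: it produces $D_H^\prime$ on $E_H^\prime = E\oplus K_H$ whose kernel-of-symbol is $K_H' = (\Lambda_H^1\otimes K_H)\cap(S_\perp^2\otimes E) = K_H^1$. The operator $D_H^\prime$ is itself contact-compatible with surjective partial symbol, so the construction can be re-run with $(E, K_H)$ replaced by $(E_H^\prime, K_H^1)$. The regularity hypothesis ensures that at each step the relevant subbundles have constant rank so splittings $\delta_H$ exist. Lemma~\ref{Sperpthree} together with the inductive definition~(\ref{Sperpell}) identifies the kernel of the partial symbol at the $(\ell{+}1)$-st stage as $(S_\perp^\ell\otimes K_H^{\ell-1})\cap(S_\perp^{\ell+1}\otimes E_H^{(\ell-1)})$, which unwinds to $(S_\perp^\ell\otimes K_H)\cap(S_\perp^{\ell+1}\otimes E) = K_H^\ell$. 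When $K_H^\ell = 0$ the partial symbol vanishes, the operator is a partial connection, and the iterated bijections of Theorem~\ref{secondcounterpart} assemble into the claimed isomorphism.

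For $3$-dimensional contact manifolds the Rumin complex forces the consistency equation to be second order, and the iteration must be modified as sketched in the excerpt. Here Theorem~\ref{improvedfirstthreeDcontacttheorem}, Lemma~\ref{firstordertrick}, and Proposition~\ref{calculateenhancedsymbol} allow the second-order consistency $\nabla_H^{(2)}\mu = \kappa_H\sigma$ to be re-expressed as a first-order contact-compatible system on $\Lambda_H^1\otimes K_H$ whose partial symbol is given by the homomorphism $\Sigma\otimes\mathrm{Id}$, with kernel $S_\perp^3\otimes E$. Carrying one iteration through yields the system~(\ref{finalprolongation}) on $E\oplus K_H\oplus K_H^1\oplus K_H^2$ whose new partial symbol has kernel $K_H^3$. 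The next iteration is structurally the same: introduce a preferred partial connection on $K_H^{\prime\prime}$, pick a splitting~$\delta_H$, extend via Corollary~\ref{extend}, apply Lemma~\ref{firstordertrick} to the second-order compatibility $\nabla_H^{(2)}(\cdots) = \kappa_H^{\prime\prime}\rho$, and use the Leibniz rule and the equations already in hand to replace all derivatives of already-introduced variables by zeroth-order expressions. Inductively one sees that the kernel of the resulting partial symbol at stage $\ell$ is $(S_\perp^\ell\otimes K_H)\cap(S_\perp^{\ell+1}\otimes E) = K_H^\ell$.

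The main obstacle is the three-dimensional bookkeeping: verifying inductively that the second-order Rumin compatibility contributes exactly the extra $L$-factor needed to upgrade $S_\perp^{\ell}$ to $S_\perp^{\ell+1}$ in the kernel of the new partial symbol, rather than producing something smaller or larger. This reduces, via Proposition~\ref{calculateenhancedsymbol}, to the fact that $S_\perp^{\ell+1} = (\Lambda_H^1\otimes S_\perp^\ell)\cap(S_\perp^\ell\otimes\Lambda_H^1)$ and to the explicit form of $\Sigma$ in~(\ref{globalhom}); once this is checked the finite-type hypothesis terminates the iteration in finitely many steps, the resulting symbol-free operator is a partial connection on $\mathbb{T} = E \oplus K_H \oplus \bigoplus_{\ell \geq 1} K_H^\ell$, and composing the bijections at each stage gives the isomorphism between parallel sections of $\nabla_H$ and the solution space of $D\sigma = 0$. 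The dimension bound by $\rank \mathbb{T}$ is then immediate.
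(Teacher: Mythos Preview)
Your proposal is correct and follows essentially the same approach as the paper, which presents Theorem~\ref{finalfirstordercontacttheorem} as a summary of the constructions developed throughout \S\ref{contact} (iterating Theorem~\ref{secondcounterpart} in dimension $\geq 5$, and the scheme leading to~(\ref{finalprolongation}) in dimension~$3$) rather than giving a separate formal proof. One small slip: in the $\geq 5$ case the single iteration step yields kernel $(\Lambda_H^1\otimes K_H^{\ell-1})\cap(S_\perp^2\otimes E_H^{(\ell-1)})$ rather than the expression you wrote, and it is the unwinding via~(\ref{Sperpell}) that identifies this with $K_H^\ell$, exactly as you go on to say.
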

\begin{remark} A uniform approach to contact prolongation in all dimensions is
provided by the theory of {\em weighted jets\/} developed by
Morimoto~\cite{morimoto} in the much more general context of {\em filtered
manifolds}. As far as contact manifolds are concerned, the bundle $J_H^1E$
appearing in (\ref{diagJH}) is the first {\em weighted jet bundle\/} and, more
generally and in all dimensions, there are higher weighted jet bundles and
weighted jet exact sequences
\begin{equation}\label{weightedjets}
0\to S_\perp^\ell\otimes E\to J_H^\ell E\to J_H^{\ell-1}\to 0.\end{equation}
We shall return to these sequences in \S\ref{highercontact} but here we just
remark that one can modify, without too much trouble, the usual theory of
prolongation and finite-type linear differential operators due to
Goldschmidt~\cite{goldschmidt}, Spencer~\cite{spencer}, et alia, and usually
expressed in terms of ordinary jet bundles, so as to apply to filtered
manifolds simply by systematically replacing ordinary jets by weighted jets.
This is the spirit of~\cite{morimoto}. Although the partial connection in
Theorem~\ref{finalfirstordercontacttheorem} seems to be out of reach from this
point of view, Neusser~\cite{neusser} has used weighted jets to obtain the same
final bound on the dimension of the solution space of~$D$.
\end{remark}

\begin{example} As a simple example of
Theorem~\ref{finalfirstordercontacttheorem} in action, let us consider the
system of partial differential equations on~${\mathbb{R}}^3$ given by
\begin{equation}\label{pdes}Xf=0,\qquad Xg+Yf=0,\qquad Yg=0\end{equation}
in Darboux co\"ordinates (with $X=\partial/\partial x$ and
$Y=\partial/\partial y+x\partial/\partial z$, as before). Recall that we may
take $\Lambda_H^1={\mathrm{span}}\{dx,dy\}$. As a differential operator
$$E={\mathbb{R}}^2\ni\left[\begin{array}cf\\ g\end{array}\right]\longmapsto
\left[\begin{array}cXf\\ Xg+Yf\\ Yg\end{array}\right]\in{\mathbb{R}}^3=F$$
with partial symbol given by
$$dx\otimes\left[\begin{array}cf\\ g\end{array}\right]\longmapsto
\left[\begin{array}cf\\ g\\ 0\end{array}\right]\qquad
dy\otimes\left[\begin{array}cf\\ g\end{array}\right]\longmapsto
\left[\begin{array}c0\\ f\\ g\end{array}\right].$$
We see that
$$K_H={\mathrm{span}}\left\{
dx\otimes\left[\begin{array}c0\\ 1\end{array}\right]-
dy\otimes\left[\begin{array}c1\\ 0\end{array}\right]\right\}$$
has rank $1$ and so $K_H^\prime=\Lambda_H^1\otimes K_H$ (because we are in $3$
dimensions) has rank~$2$. However, it is easy to use the description of
$S_\perp^3$ in Darboux co\"ordinates given as the kernel of (\ref{hom}) to
check that $K_H^{\prime\prime}=0$. Theorem~\ref{finalfirstordercontacttheorem}
implies that the dimension of the solution space is bounded by
$$\rank E+\rank K_H+\rank K_H^\prime=2+1+2=5.$$
In fact, taking
\begin{equation}\label{solution}f=2pz+qy^2+ry+s\qquad
g=2q(z-xy)-px^2-rx+t\end{equation}
for arbitrary constants $p,q,r,s,t$ solves (\ref{pdes}) and so this bound is
sharp with (\ref{solution}) the general solution. We shall return to this
example in \S\ref{geometric}. For the system
$$Xf=0,\qquad Xg+Yf=0,\qquad Xh+Yg=0,\qquad Yh=0,$$
we find that $K_H^{\ell}=0$ for $\ell\geq 4$ and that
$$\rank E+\rank K_H+\rank K_H^\prime+\rank K_H^{\prime\prime}+
\rank K_H^{\prime\prime\prime}=3+2+4+2+3=14.$$
In fact, with the machinery of \S\ref{geometric} we shall be able to see
that the pattern of bounds for systems of this type in dimension $3$ continues
as
$$5,14,30,55,91,140,\cdots,\ffrac{(k+1)(k+2)(2k+3)}6,\cdots.$$
\end{example}

\section{General prolongation for higher order operators}\label{highergeneral}
The initial steps in prolonging a higher order operator closely follow the
first order case detailed in~\S\ref{general}. Suppose $D:E\to F$ is a
$k^{\mathrm{th}}$ order linear differential operator and suppose that its
symbol $\bigodot^k\!\Lambda^1\otimes E\to F$ is surjective. Write $\pi$ for
this symbol and $K$ for its kernel. Define the vector bundle $E^\prime$ as the
kernel of $D:J^kE\to F$. We obtain, generalising~(\ref{commdiag}), a
commutative diagram
\begin{equation}\label{commdiag_k}\begin{array}{ccccccccc} &&0&&0\\
&&\downarrow&&\downarrow\\
0&\to&K&\to&E^\prime&\to&J^{k-1}E&\to&0\\
&&\downarrow&&\downarrow&&\|\\
0&\to&\bigodot^k\!\Lambda^1\otimes E&\to&J^kE&\to&J^{k-1}E&\to&0\\
&&\makebox[0pt]{\scriptsize$\pi\,$}\downarrow\makebox[0pt]{}&&
\makebox[0pt]{\scriptsize$D\,$}\downarrow\makebox[0pt]{}\\
&&F&=&F\\
&&\downarrow&&\downarrow\\
&&0&&0
\end{array}\end{equation}
with exact rows and columns. To replace Lemma~\ref{choosesplitting}, we need
the following notion.
\begin{definition}
A {\em $k^{\mathrm{th}}$ order connection\/} on a smooth vector bundle $E$ is a
linear differential operator $\nabla^k:E\to\bigodot^k\!\Lambda^1\otimes E$
whose symbol $\bigodot^k\!\Lambda^1\otimes E\to\bigodot^k\!\Lambda^1\otimes E$
is the identity. Equivalently, such a higher order connection is a splitting of
the jet exact sequence
\begin{equation}\label{jes}
\textstyle 0\to\bigodot^k\!\Lambda^1\otimes E\to J^kE\xrightarrow{\,p\,}
J^{k-1}E\to 0.
\end{equation}
\end{definition}
\begin{lemma}\label{rewritingD}
There is a $k^{\mathrm{th}}$ order connection $\nabla^k$ on $E$ so that $D$
is the composition
$$\textstyle E\xrightarrow{\,\nabla^k\,}\bigodot^k\!\Lambda^1\otimes E
\xrightarrow{\,\pi\,}F.$$
\end{lemma}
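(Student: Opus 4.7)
The plan is to repeat the argument of Lemma~\ref{choosesplitting} verbatim at order $k$: I will extract a splitting of the jet exact sequence~(\ref{jes}) from a splitting of the top row of~(\ref{commdiag_k}), and recognise the result as the sought $k$th order connection.

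First I would choose any smooth splitting $s'\colon J^{k-1}E\to E'$ of the top row
\[ 0\to K\to E'\to J^{k-1}E\to 0, \]
which exists because this is a short exact sequence of smooth vector bundles over a paracompact base. A quick chase in~(\ref{commdiag_k}) shows that the surjection $E'\to J^{k-1}E$ appearing there is the restriction to $E'\subseteq J^kE$ of the jet projection $p\colon J^kE\to J^{k-1}E$ in the middle row. Therefore, composing $s'$ with the inclusion $E'\hookrightarrow J^kE$ yields a bundle map $s\colon J^{k-1}E\to J^kE$ satisfying $p\circ s=\mathrm{Id}$, i.e.\ a splitting of~(\ref{jes}). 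By the definition immediately preceding the lemma, this splitting is equivalent to a $k$th order connection $\nabla^k\colon E\to\bigodot^k\Lambda^1\otimes E$; concretely, $\nabla^k\sigma=j^k\sigma-s(j^{k-1}\sigma)$, which indeed lies in $\ker p=\bigodot^k\Lambda^1\otimes E$ since $p(j^k\sigma)=j^{k-1}\sigma$.

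To verify the factorisation $D=\pi\circ\nabla^k$, recall that $E'=\ker(D\colon J^kE\to F)$ by definition, while $\pi$ is the restriction of $D$ to $\bigodot^k\Lambda^1\otimes E\subseteq J^kE$. Since $s(j^{k-1}\sigma)$ lives in $E'$ it is annihilated by $D$, so
\[ \pi(\nabla^k\sigma)=D\bigl(j^k\sigma-s(j^{k-1}\sigma)\bigr)=D\sigma-0=D\sigma, \]
as required.

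There is no genuine obstacle here; the only input is that short exact sequences of smooth vector bundles over a paracompact base split. As in Lemma~\ref{choosesplitting}, the construction realises a bijection between $k$th order connections satisfying $D=\pi\circ\nabla^k$ and splittings of the top row of~(\ref{commdiag_k}), so the statement simply records the linear-algebraic content of that diagram.
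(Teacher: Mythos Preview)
Your argument is correct and follows exactly the approach of the paper: choose a splitting of the short exact sequence~(\ref{higherKTE}) and mimic the proof of Lemma~\ref{choosesplitting}. You supply more detail than the paper's one-line proof, but the content is identical.
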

\begin{proof}
Choose a splitting of the short exact sequence
\begin{equation}\label{higherKTE}
0\to K\to E^\prime\to J^{k-1}E\to 0
\end{equation}
and then mimic the proof of Lemma~\ref{choosesplitting}.
\end{proof}
A connection $\nabla:E\to\Lambda^1\otimes E$ induces an operator
$\nabla:\Lambda^1\otimes E\to\Lambda^2\otimes E$ and allows us to define the
its curvature. To extend this to higher order connections, let us denote by
$\YY^{k+1}\Lambda^1$ the bundle of covariant tensors $\phi_{abcd\cdots e}$
with $k+1$ indices satisfying
$$\phi_{abcd\cdots e}=\phi_{[ab](cd\cdots e)}\quad\mbox{and}\quad
\phi_{[abc]d\cdots e}=0.$$
Notice that there is a canonical projection
$$\textstyle\Lambda^1\otimes\bigodot^k\!\Lambda^1\ni\phi_{abcd\cdots e}
\stackrel{Y}{\longmapsto}\phi_{[ab]cd\cdots e}\ni\YY^{k+1}\Lambda^1$$
corresponding to the decomposition of irreducible tensor bundles
\begin{equation}\label{decomposition}
\raisebox{8pt}{$\Lambda^1\otimes\bigodot^k\!\Lambda^1\enskip=\quad$}
\begin{picture}(8,20) \put(0,8){\line(1,0){8}}
\put(0,16){\line(1,0){8}} \put(0,8){\line(0,1){8}} \put(8,8){\line(0,1){8}}
\end{picture}
\raisebox{8pt}{\enskip$\otimes$\enskip}
\begin{picture}(48,20) \put(0,8){\line(1,0){48}}
\put(0,16){\line(1,0){48}} \put(0,8){\line(0,1){8}} \put(8,8){\line(0,1){8}}
\put(16,8){\line(0,1){8}} \put(28,12){\makebox(0,0){$\cdots$}}
\put(40,8){\line(0,1){8}} \put(48,8){\line(0,1){8}}
\end{picture}
\raisebox{8pt}{\quad$=$\quad}
\begin{array}[t]{ccc}\begin{picture}(56,20) \put(0,8){\line(1,0){56}}
\put(0,16){\line(1,0){56}} \put(0,8){\line(0,1){8}} \put(8,8){\line(0,1){8}}
\put(16,8){\line(0,1){8}} \put(36,12){\makebox(0,0){$\cdots$}}
\put(24,8){\line(0,1){8}} \put(48,8){\line(0,1){8}} \put(56,8){\line(0,1){8}}
\end{picture}
&\raisebox{8pt}{$\oplus$}
&\begin{picture}(48,20) \put(0,0){\line(1,0){8}} \put(0,8){\line(1,0){48}}
\put(0,16){\line(1,0){48}} \put(0,0){\line(0,1){16}} \put(8,0){\line(0,1){16}}
\put(16,8){\line(0,1){8}} \put(28,12){\makebox(0,0){$\cdots$}}
\put(40,8){\line(0,1){8}} \put(48,8){\line(0,1){8}}
\end{picture}\\ &\|\\
\bigodot^{k+1}\!\Lambda^1&\oplus&\YY^{k+1}\Lambda^1.\end{array}\end{equation}
\begin{proposition}\label{nextBGG}
A $k^{\mathrm{th}}$ order connection
$\nabla^k:E\to\bigodot^k\!\Lambda^1\otimes E$ canonically induces a first
order operator
$\nabla:\bigodot^k\!\Lambda^1\otimes E\to\YY^{k+1}\Lambda^1\otimes E$ such that
\begin{itemize}
\item its symbol $\Lambda^1\otimes\bigodot^k\!\Lambda^1\otimes E\to
\YY^{k+1}\Lambda^1\otimes E$ is $Y\otimes{\mathrm{Id}}$
\item the composition $E\to\bigodot^k\!\Lambda^1\otimes
E\to\YY^{k+1}\Lambda^1\otimes E$ is a differential operator of order $k-1$,
which we shall denote by~$\kappa$.
\end{itemize}\end{proposition}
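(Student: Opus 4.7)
The plan is to produce $\nabla$ and $\kappa$ by combining an auxiliary explicit construction with a symbol-correction step, and to deduce canonicity from a uniqueness argument rather than from an intrinsic jet-bundle definition.

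First, I fix a torsion-free affine connection on $TM$ and an arbitrary linear connection on $E$; together these induce a linear connection $D:\bigodot^k\!\Lambda^1\otimes E\to\Lambda^1\otimes\bigodot^k\!\Lambda^1\otimes E$. Set $\bar\nabla_0:=(Y\otimes{\mathrm{Id}})\circ D$, which is a first order operator with symbol precisely $Y\otimes{\mathrm{Id}}$ by construction. Now form the composition $\bar\nabla_0\circ\nabla^k:E\to\YY^{k+1}\Lambda^1\otimes E$. A priori its order is $k+1$, but its $(k+1)$-symbol at $\xi\in\Lambda^1$ sends $e\mapsto Y(\xi\otimes\xi^k)\otimes e$, and $\xi^{k+1}$ lies in $\bigodot^{k+1}\!\Lambda^1$, which is the kernel of $Y$ in the decomposition~(\ref{decomposition}). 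So the composition has order at most $k$. Let $S:\bigodot^k\!\Lambda^1\otimes E\to\YY^{k+1}\Lambda^1\otimes E$ denote its $k$-th order principal symbol, regarded as a bundle homomorphism via the natural identification $\bigodot^k TM\otimes\Hom(E,G)\cong\Hom(\bigodot^k\!\Lambda^1\otimes E,G)$.

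Define $\nabla:=\bar\nabla_0-S$. Subtracting the bundle map $S$ does not alter the first-order symbol, so $\nabla$ still has symbol $Y\otimes{\mathrm{Id}}$. Since $\nabla^k$ has identity principal symbol, the $k$-th symbol of $S\circ\nabla^k$ is exactly $S$; therefore the $k$-th symbol of $\kappa:=\nabla\circ\nabla^k=\bar\nabla_0\circ\nabla^k-S\circ\nabla^k$ vanishes, leaving $\kappa$ of order at most $k-1$ as claimed.

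For canonicity I argue uniqueness: if $\nabla'$ is any other first order operator $\bigodot^k\!\Lambda^1\otimes E\to\YY^{k+1}\Lambda^1\otimes E$ with symbol $Y\otimes{\mathrm{Id}}$ whose composition with $\nabla^k$ has order at most $k-1$, then the difference $A:=\nabla-\nabla'$ has vanishing first-order symbol and is therefore a bundle map. The $k$-th symbol of $A\circ\nabla^k$ equals $A$ (again because $\nabla^k$ has identity principal symbol), and the hypothesis forces $A=0$. Hence $\nabla$ is intrinsically characterised by the two listed properties and, in particular, is independent of the auxiliary connections chosen.

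The main obstacle is the canonicity claim. The construction clearly yields an operator with the correct symbol and composition property, but does so using auxiliary data. The substantive point is to recognise that these two properties together already pin $\nabla$ down uniquely, turning an existence argument with choices into an intrinsic construction dependent only on~$\nabla^k$.
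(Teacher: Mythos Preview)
Your proof is correct and follows essentially the same strategy as the paper: choose auxiliary torsion-free and bundle connections, build a candidate operator, and appeal to the fact that the two stated properties uniquely determine~$\nabla$. The paper writes the construction in abstract-index form, extracting the sub-leading coefficient tensor $\Gamma$ of $\nabla^k$ and defining $\nabla\sigma_{bc\cdots e}=\partial_{[a}\sigma_{b]c\cdots e}+\Gamma_{cd\cdots e[a}{}^{fg\cdots h}\sigma_{b]fg\cdots h}$, then simply asserts this formula ``is forced by the two characterising properties''; you instead subtract the full $k$-th symbol $S$ of $\bar\nabla_0\circ\nabla^k$ and spell out the uniqueness argument explicitly, which is a cleaner way of justifying canonicity.
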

\begin{proof}
Choose an arbitrary connection on $E$ and an arbitrary torsion-free connection
on the tangent bundle and hence on all tensor bundles coupled with~$E$.
Denoting all resulting connections by~$\partial_a$, the operator $\nabla^k$ has
the form $$\sigma\stackrel{\nabla^k}{\longmapsto}
\overbrace{\partial_{(b}\partial_c\partial_d\cdots\partial_{e)}}^k\sigma+
\Gamma_{bcd\cdots e}{}^{fg\cdots h}
\overbrace{\partial_f\partial_g\cdots\partial_h}^{k-1}\sigma
+\mbox{lower order terms}$$
for a uniquely defined tensor $\Gamma_{bcd\cdots e}{}^{fg\cdots h}$ symmetric
in both its lower and upper indices and having values in~$\End(E)$. But then
$$\sigma_{bcd\cdots e}\stackrel{\nabla}{\longmapsto}
\partial_{[a}\sigma_{b]cd\cdots e}
+\Gamma_{cd\cdots e[a}{}^{fg\cdots h}\sigma_{b]fg\cdots h}$$
is forced by the two characterising properties of~$\nabla$ (and, in 
particular, does not depend on choice of~$\partial_a$).
\end{proof}
\begin{remark}
The operator $\nabla$ can also be constructed in a rather tautological but less
explicit fashion as follows. As a special case of
\cite[Proposition~3]{goldschmidt}, there is a canonically defined first order
differential operator
$$J^kE\xrightarrow{\,{\mathcal{G}}\,}
\frac{\Lambda^1\otimes J^kE}{\bigodot^{k+1}\!\Lambda^1\otimes E},$$
where $\bigodot^{k+1}\!\Lambda^1\otimes E$ is regarded as a subbundle of
$\Lambda^1\otimes J^kE$ by means of the inclusion
$\bigodot^{k+1}\!\Lambda^1\otimes E\hookrightarrow
\Lambda^1\otimes\bigodot^k\!\Lambda^1\otimes E$ and the jet exact
sequence~(\ref{jes}). But a $k^{\mathrm{th}}$ order connection splits
(\ref{jes}) whence there is a homomorphism of vector bundles
$$\frac{\Lambda^1\otimes J^kE}{\bigodot^{k+1}\!\Lambda^1\otimes E}
\longrightarrow\frac{\Lambda^1\otimes\bigodot^k\!\Lambda^1\otimes E}
{\bigodot^{k+1}\!\Lambda^1\otimes E}
=\YY^{k+1}\Lambda^1\otimes E,$$
where the last identification comes from~(\ref{decomposition}).
The operator $\nabla$ is the composition
$${\textstyle\bigodot^k\!\Lambda^1\otimes E}\to J^kE
\xrightarrow{\,{\mathcal{G}}\,}
\frac{\Lambda^1\otimes J^kE}{\bigodot^{k+1}\!\Lambda^1\otimes E}
\to\YY^{k+1}\Lambda^1\otimes E.$$
\end{remark}
The {\em Spencer operator\/}~\cite{spencer} is a canonically defined first
order linear differential operator 
${\mathcal{S}}:J^kE\to\Lambda^1\otimes J^{k-1}E$ uniquely characterised by
\begin{itemize}
\item its symbol $\Lambda^1\otimes J^kE\to\Lambda^1\otimes J^{k-1}E$ is induced
by the projection $J^kE\xrightarrow{\,p\,}J^{k-1}E$
\item the sequence $E\xrightarrow{\,j^k\,}J^kE\xrightarrow{\,{\mathcal{S}}\,}
\Lambda^1\otimes J^{k-1}E$ is exact,
\end{itemize}
where $j^k$ is the universal $k^{\mathrm{th}}$ order differential operator. As
done in the proof of Proposition~\ref{nextBGG}, it is straightforward to write
down a formula for ${\mathcal{S}}$ in terms of arbitrarily chosen
connections~$\partial_a$. If~$k=2$, for example, then
\begin{equation}\label{S}\sigma\xrightarrow{\,j^2\,}
\left[\begin{array}c\sigma\\
\partial_b\sigma\\
\partial_{(b}\partial_{c)}\sigma\end{array}\right]\quad\mbox{forces}\quad
\left[\begin{array}c\sigma\\
\sigma_b\\
\sigma_{bc}\end{array}\right]\xrightarrow{\,{\mathcal{S}}\,}
\left[\begin{array}c\partial_a\sigma-\sigma_a\\
\partial_a\sigma_b-K_{ab}\sigma-\sigma_{ab}\end{array}\right]\end{equation}
where $K_{ab}$ is the curvature of~$\partial_a$. Formul{\ae} for higher $k$
have more complicated terms involving higher covariant derivatives of curvature
but, clearly, the result is forced and when the connections are flat, as can
always be supposed locally, the general component of ${\mathcal{S}}$ is simply
$\partial_a\sigma_{bc\cdots d}-\sigma_{abc\cdots d}.$

The Spencer operator can be combined with a $k^{\mathrm{th}}$ order
connection $\nabla^k$
on $E$ to yield an ordinary connection on~$J^{k-1}E$. Specifically, we view
$\nabla^k$ as a splitting of (\ref{jes}) and compose
$$J^{k-1}E\leftrightarrows J^kE\xrightarrow{\,{\mathcal{S}}\,}
\Lambda^1\otimes J^{k-1}E,$$
noting that the result is a connection because its symbol is the identity. Also
denoting this connection by~$\nabla$, it is clear that the composition
$$J^{k-1}E\xrightarrow{\,\nabla\,}\Lambda^1\otimes J^{k-1}E
\xrightarrow{\,{\mathrm{Id}}\otimes p\,}\Lambda^1\otimes J^{k-2}E$$
is simply the Spencer operator
$J^{k-1}E\xrightarrow{\,{\mathcal{S}}\,}\Lambda^1\otimes J^{k-2}E$ one degree
lower down. A detailed investigation into the relationship between 
$k^{\mathrm{th}}$ order connections on $E$ and ordinary connections on 
$J^{k-1}E$ is undertaken in~\cite{eastwood}.
\begin{proposition}\label{secondproposition}
Let $\mu\in\Gamma(\bigodot^k\!\Lambda^1\otimes E)$ and
also regard $\mu$ as a section of $\Lambda^1\otimes J^{k-1}E$ by means of the
inclusions
$$\textstyle\bigodot^k\!\Lambda^1\otimes E\hookrightarrow
\Lambda^1\otimes\bigodot^{k-1}\!\Lambda^1\otimes E\hookrightarrow
\Lambda^1\otimes J^{k-1}E.$$
Then, the canonical projection $J^{k-1}E\to E$ induces an
isomorphism
$$\{\tilde\sigma\in\Gamma(J^{k-1}E)\mbox{\rm\ s.t.\ }\nabla\tilde\sigma=\mu\}
\cong\{\sigma\in\Gamma(E)\mbox{\rm\ s.t.\ }\nabla^k\sigma=\mu\}.$$
\end{proposition}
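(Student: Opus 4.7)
The plan is to exhibit the inverse of the projection explicitly as $\sigma\mapsto j^{k-1}\sigma$ and verify both that this is well defined and that every $\nabla$-solution $\tilde\sigma$ is automatically holonomic. Injectivity of the projection on the respective solution sets is trivial, since two holonomic $(k-1)$-jets with the same $0$-jet must agree; the content therefore lies in surjectivity and in matching the two equations.

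For the forward direction I would compute $\nabla(j^{k-1}\sigma)$ by unwinding the definition $\nabla={\mathcal{S}}\circ s$, where $s:J^{k-1}E\to J^kE$ is the splitting of~(\ref{jes}) encoded by $\nabla^k$. Since $p(j^k\sigma)=j^{k-1}\sigma=p(s(j^{k-1}\sigma))$, the difference $j^k\sigma-s(j^{k-1}\sigma)$ lies in $\ker p=\bigodot^k\!\Lambda^1\otimes E$, and by the construction of $\nabla^k$ from $s$ (Lemma~\ref{rewritingD}) it is precisely $i(\nabla^k\sigma)$, where $i$ denotes the canonical inclusion $\bigodot^k\!\Lambda^1\otimes E\hookrightarrow J^kE$. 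Because ${\mathcal{S}}\circ j^k=0$ is the defining property of the Spencer operator, this yields $\nabla(j^{k-1}\sigma)=-{\mathcal{S}}(i(\nabla^k\sigma))$. A pointwise calculation using local flat connections (as in formula~(\ref{S})) identifies the order-zero operator $-{\mathcal{S}}\circ i$ with the inclusion $\bigodot^k\!\Lambda^1\otimes E\hookrightarrow\Lambda^1\otimes\bigodot^{k-1}\!\Lambda^1\otimes E\hookrightarrow\Lambda^1\otimes J^{k-1}E$ prescribed in the statement. Hence $\nabla(j^{k-1}\sigma)=\mu$ is precisely the equation $\nabla^k\sigma=\mu$.

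For the reverse direction, suppose $\tilde\sigma\in\Gamma(J^{k-1}E)$ satisfies $\nabla\tilde\sigma=\mu$. The key step is to apply ${\mathrm{Id}}\otimes p$, where now $p:J^{k-1}E\to J^{k-2}E$, to both sides. On the right, $\mu$ sits in $\Lambda^1\otimes\bigodot^{k-1}\!\Lambda^1\otimes E$ and $\bigodot^{k-1}\!\Lambda^1\otimes E\subset J^{k-1}E$ is contained in $\ker p$, so $({\mathrm{Id}}\otimes p)\mu=0$. On the left, the paragraph preceding the proposition records that $({\mathrm{Id}}\otimes p)\circ\nabla$ coincides with the Spencer operator $J^{k-1}E\to\Lambda^1\otimes J^{k-2}E$ one degree lower. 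Therefore ${\mathcal{S}}\tilde\sigma=0$, and exactness of the Spencer sequence at $J^{k-1}E$ forces $\tilde\sigma=j^{k-1}\sigma$ for $\sigma=p_0(\tilde\sigma)\in\Gamma(E)$, where $p_0:J^{k-1}E\to E$ is the full projection. Applying the forward computation to this $\sigma$ then gives $\nabla^k\sigma=\mu$, completing the argument.

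The main obstacle is the bookkeeping in the forward step, namely pinning down that $-{\mathcal{S}}\circ i$ realises the prescribed embedding $\bigodot^k\!\Lambda^1\otimes E\hookrightarrow\Lambda^1\otimes J^{k-1}E$ with the correct sign. Once that identification is secure, everything else is a direct consequence of the construction of $\nabla$ from the Spencer operator together with the characterising exactness of the Spencer sequence.
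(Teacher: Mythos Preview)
Your argument is correct and follows precisely the route the paper takes: the paper's ``crucial observation'' that $\nabla\tilde\sigma=\mu\Rightarrow{\mathcal{S}}\tilde\sigma=0\Leftrightarrow\tilde\sigma=j^{k-1}\sigma$ is exactly your reverse direction, and your forward computation is the ``untangling a couple of definitions'' that the paper leaves to the reader. The sign check for $-{\mathcal{S}}\circ i$ is confirmed by the explicit formula~(\ref{S}) (applied to $(0,0,\mu_{bc})$ one finds $-{\mathcal{S}}(i(\mu))=(0,\mu_{ab})$), so there is no gap.
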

\begin{proof}
The crucial observation is that
$$\nabla\tilde\sigma=\mu\implies{\mathcal{S}}\tilde\sigma=0\iff
\tilde\sigma=j^{k-1}\sigma,\quad\mbox{for some }\sigma\in\Gamma(E).$$
The remainder of the proof is just a matter of untangling a couple of
definitions.
\end{proof}
\begin{remark} It is illuminating to view Proposition~\ref{secondproposition}
in terms of arbitrarily chosen connections $\partial_a$ as above. Suppose, for
example, that $k=2$. Then  we can write
$$\sigma\xrightarrow{\,\nabla^2\,}
\partial_{(a}\partial_{b)}\sigma+\Gamma_{ab}{}^c\partial_c\sigma
+\Theta_{ab}\sigma$$
for certain uniquely determined tensors $\Gamma_{ab}{}^c=\Gamma_{(ab)}{}^c$ and
$\Theta_{ab}=\Theta_{(ab)}$ having values in $\End(E)$, in which case
$$\nabla_a\left[\begin{array}c\sigma\\
\sigma_b\end{array}\right]={\mathcal{S}}
\left[\begin{array}c\sigma\\
\sigma_b\\
-\Gamma_{bc}{}^d\sigma_d-\Theta_{bc}\sigma\end{array}\right]=
\left[\begin{array}c\partial_a\sigma-\sigma_a\\
\partial_a\sigma_b-K_{ab}\sigma+\Gamma_{bc}{}^d\sigma_d+\Theta_{bc}\sigma
\end{array}\right]$$
in accordance with~(\ref{S}).
Therefore,
$$\nabla\tilde\sigma=\mu\iff\left\{
\begin{array}{rcl}\partial_a\sigma-\sigma_a&=&0\\
\partial_a\sigma_b-K_{ab}\sigma+\Gamma_{bc}{}^d\sigma_d+\Theta_{bc}\sigma&=&
\mu_{ab}.\end{array}\right.$$
But the first of these equations implies that
$$\begin{array}{rcl}
\partial_a\sigma_b-K_{ab}\sigma+\Gamma_{bc}{}^d\sigma_d+\Theta_{bc}\sigma&=&
\partial_a\partial_b\sigma-K_{ab}\sigma
+\Gamma_{bc}{}^d\partial_d\sigma+\Theta_{bc}\sigma\\
&=&
\partial_{(a}\partial_{b)}\sigma+\Gamma_{bc}{}^d\partial_d\sigma
+\Theta_{bc}\sigma=\nabla_{ab}^2\sigma
\end{array}$$
and so the second equation maybe rewritten as $\nabla^2\sigma=\mu$.
\end{remark}
\begin{remark}
The abstract approach and results expressed in terms of jets are due to
Goldschmidt and Spencer, e.g.~\cite{goldschmidt,spencer}. It is often the
case, however, that the operator $D:E\to F$ in question has a geometric origin,
in which case there are associated connections that one is almost obliged to
use in writing down an effective prolongation scheme. This is the approach
adopted, for example, in~\cite{bceg}.
\end{remark}

The following result generalises Theorem~\ref{firsttheorem}.
\begin{theorem}\label{highertheorem} There is a first order differential
operator
$$\widetilde{D}:
E^\prime\equiv\begin{array}cJ^{k-1}E\\ \oplus\\ K\end{array}\longrightarrow
\begin{array}c\Lambda^1\otimes J^{k-1}E\\ \oplus\\
\YY^{k+1}\Lambda^1\otimes E\end{array}$$
so that the canonical projection $E^\prime\to J^{k-1}E\to E$ induces an
isomorphism
$$\{\Sigma\in\Gamma(E^\prime)\mbox{\rm\ s.t.\ }\widetilde{D}\Sigma=0\}\cong
\{\sigma\in\Gamma(E)\mbox{\rm\ s.t.\ }D\sigma=0\}.$$
\end{theorem}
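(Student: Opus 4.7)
The plan is to mimic Theorem~\ref{firsttheorem}, substituting for the single connection $\nabla$ used there the two ingredients just assembled: the ordinary connection $\nabla$ on $J^{k-1}E$ obtained by composing the Spencer operator with the chosen splitting $\nabla^k$ of the jet sequence~(\ref{jes}), and the first order operator $\nabla:\bigodot^k\!\Lambda^1\otimes E\to\YY^{k+1}\Lambda^1\otimes E$ of Proposition~\ref{nextBGG}. As a preliminary I would observe that the curvature $\kappa\equiv\nabla\circ\nabla^k:E\to\YY^{k+1}\Lambda^1\otimes E$ has order $k-1$, by the second bullet of Proposition~\ref{nextBGG}, and hence factors uniquely through a bundle homomorphism $\kappa:J^{k-1}E\to\YY^{k+1}\Lambda^1\otimes E$, which I continue to denote~$\kappa$. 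I would then define $\widetilde D$ in imitation of~(\ref{defofDtilde}) by
$$\widetilde{D}\left[\begin{array}c\tilde\sigma\\ \mu\end{array}\right]\equiv
\left[\begin{array}c\nabla\tilde\sigma-\mu\\ \nabla\mu-\kappa\tilde\sigma\end{array}\right],$$
with the section $\mu\in\Gamma(K)\subseteq\Gamma(\bigodot^k\!\Lambda^1\otimes E)$ viewed inside $\Lambda^1\otimes J^{k-1}E$ via the inclusions of Proposition~\ref{secondproposition}.

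To check the bijection, suppose first that $D\sigma=0$. Lemma~\ref{rewritingD} rewrites this as $\nabla^k\sigma=\mu$ for some $\mu\in\Gamma(K)$. Setting $\tilde\sigma\equiv j^{k-1}\sigma$, Proposition~\ref{secondproposition} gives $\nabla\tilde\sigma=\mu$, and applying the operator of Proposition~\ref{nextBGG} to both sides yields $\nabla\mu=\nabla\nabla^k\sigma=\kappa\sigma=\kappa\tilde\sigma$, whence $\widetilde D(\tilde\sigma,\mu)=0$. Conversely, if $\widetilde D(\tilde\sigma,\mu)=0$, the first component gives $\nabla\tilde\sigma=\mu$ with $\mu\in\Gamma(K)$, so Proposition~\ref{secondproposition} recovers a unique $\sigma\in\Gamma(E)$ with $\tilde\sigma=j^{k-1}\sigma$ and $\nabla^k\sigma=\mu$; applying~$\pi$ and using $\pi\mu=0$ delivers $D\sigma=0$. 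The maps $\sigma\mapsto(j^{k-1}\sigma,\nabla^k\sigma)$ and $(\tilde\sigma,\mu)\mapsto\sigma$ (the latter induced by $E^\prime\to J^{k-1}E\to E$) are then seen to be mutually inverse.

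The only substantive point is the interpretation of the second slot of $\widetilde D$. As in Theorem~\ref{firsttheorem}, the equation $\nabla\mu-\kappa\tilde\sigma=0$ is an optional compatibility condition forced on solutions of $\nabla\tilde\sigma=\mu$ by differentiation. What does need justification is that the composition $\nabla\circ\nabla^k$ genuinely descends to a bundle homomorphism on $J^{k-1}E$, so that $\kappa\tilde\sigma$ is well-defined for arbitrary $\tilde\sigma\in\Gamma(J^{k-1}E)$ and agrees with the intrinsic $\kappa\sigma$ whenever $\tilde\sigma=j^{k-1}\sigma$. This is a direct consequence of the order bound in Proposition~\ref{nextBGG} together with the universal property of~$J^{k-1}E$, and is where the structural difference from the first order case is absorbed; beyond this, the only real verification is the single line $\nabla\nabla^k\sigma=\kappa\sigma$ used in the forward direction, which is built into Proposition~\ref{nextBGG}.
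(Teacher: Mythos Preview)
Your proof is correct and follows essentially the same approach as the paper: you define $\widetilde D$ by the identical formula $(\tilde\sigma,\mu)\mapsto(\nabla\tilde\sigma-\mu,\nabla\mu-\kappa\tilde\sigma)$, invoke Proposition~\ref{secondproposition} to pass between $\nabla^k\sigma=\mu$ and $\nabla\tilde\sigma=\mu$, and interpret the second component as an optional differential consequence. If anything you are slightly more explicit than the paper in spelling out both directions of the bijection and in noting that the factorisation of $\kappa$ through $J^{k-1}E$ is the one point requiring care.
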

\begin{proof}
Choose a $k^{\mathrm{th}}$ order connection $\nabla^k$ in accordance with
Lemma~\ref{rewritingD} so that $D\sigma=0$ if and only if $\nabla^k\sigma=\mu$
for some $\mu\in\Gamma(K)$. Define $\widetilde{D}$ by
$$\left[\begin{array}c\tilde\sigma\\ \mu\end{array}\right]
\stackrel{\widetilde{D}}{\longmapsto}
\left[\begin{array}c\nabla\tilde\sigma-\mu\\
\nabla\mu-\kappa\tilde\sigma\end{array}\right].$$
It is the same formula as used in the proof of Theorem~\ref{firsttheorem} but
the meaning of the terms have been generalised:--
\begin{itemize}
\item $\tilde\sigma\mapsto\nabla\tilde\sigma$ is the connection on $J^{k-1}E$
associated to~$\nabla^k$
\item $\mu\mapsto\nabla\mu$ is the restriction to $K$ of the operator provided
by Proposition~\ref{nextBGG}
\item $\tilde\sigma\mapsto\kappa\tilde\sigma$ is the homomorphism of vector
bundles
$$J^{k-1}E\longrightarrow\YY^{k+1}\Lambda^1\otimes E$$
induced by the ${(k-1)}^{\mathrm{st}}$ order
$\kappa:E\to\YY^{k+1}\Lambda^1\otimes E$ in Proposition~\ref{nextBGG}.
\end{itemize}
We have already seen in Proposition~\ref{secondproposition} that
$$\nabla^k\sigma=\mu\iff\nabla\tilde\sigma=\mu\quad\mbox{for some uniquely
determined }\tilde\sigma,$$
namely $\tilde\sigma=j^{k-1}\sigma$. The equation
$\nabla\mu=\kappa\tilde\sigma$ is an optional differential consequence obtained
by applying
$\nabla:\bigodot^k\!\Lambda^1\otimes E\to\YY^{k+1}\Lambda^1\otimes E$ to the
equation $\nabla^k\sigma=\mu$.
\end{proof}
To obtain a suitable generalisation of Theorem~\ref{secondtheorem} we consider
the homomorphism $\partial$ obtained as the composition
\begin{equation}\label{itspartial}\textstyle\Lambda^1\otimes K\hookrightarrow
\Lambda^1\otimes\bigodot^k\!\Lambda^1\otimes E
\xrightarrow{\,Y\otimes{\mathrm{Id}}\,}\YY^{k+1}\Lambda^1\otimes E
\end{equation}
and choose a splitting $\delta$ of
$\partial(\Lambda^1\otimes E)\hookrightarrow\YY^{k+1}\Lambda^1\otimes E$. Then,
if we define
$$D^\prime:E^\prime\equiv\hspace*{-5pt}
\begin{array}cJ^{k-1}E\\ \oplus\\ K\end{array}\longrightarrow
\begin{array}c\Lambda^1\otimes J^{k-1}E\\ \oplus\\
\partial(\Lambda^1\otimes K)\end{array}\hspace*{-5pt}\equiv F^\prime\quad
\mbox{by}\quad\left[\begin{array}c\tilde\sigma\\ \mu\end{array}\right]
\stackrel{D^\prime}{\longmapsto}
\left[\begin{array}c\nabla\tilde\sigma-\mu\\
\delta(\nabla\mu-\kappa\tilde\sigma)\end{array}\right],$$
then, from Theorem~\ref{highertheorem}, we evidently obtain
\begin{theorem}\label{aftersplitting}The canonical projection $E^\prime\to E$
induces an isomorphism
$$\{\Sigma\in\Gamma(E^\prime)\mbox{\rm\ s.t.\ }D^\prime\Sigma=0\}\cong
\{\sigma\in\Gamma(E)\mbox{\rm\ s.t.\ }D\sigma=0\}.$$
\end{theorem}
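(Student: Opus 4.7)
The plan is to deduce Theorem~\ref{aftersplitting} almost immediately from Theorem~\ref{highertheorem} by observing that passing from $\widetilde{D}$ to $D^\prime$ only discards some of the compatibility information, and that the discarded information is automatic on actual solutions. Since $D^\prime$ is literally $\widetilde{D}$ with its second component postcomposed by $\delta$, the implication $\widetilde{D}\Sigma=0\Rightarrow D^\prime\Sigma=0$ is tautological. Combined with Theorem~\ref{highertheorem}, this supplies one direction of the claimed bijection: each $\sigma$ with $D\sigma=0$ lifts to $\Sigma=(j^{k-1}\sigma,\nabla^k\sigma)\in\Gamma(E^\prime)$ satisfying $D^\prime\Sigma=0$, and the projection $E^\prime\to E$ sends this $\Sigma$ back to~$\sigma$.

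For the converse, suppose $\Sigma=(\tilde\sigma,\mu)$ satisfies $D^\prime\Sigma=0$. First I would read off the top component: $\nabla\tilde\sigma=\mu$, where $\mu$ is viewed inside $\Lambda^1\otimes J^{k-1}E$ through the chain of inclusions of Proposition~\ref{secondproposition}. Applying Proposition~\ref{secondproposition} directly then forces $\tilde\sigma=j^{k-1}\sigma$ for some (unique) $\sigma\in\Gamma(E)$ with $\nabla^k\sigma=\mu$. Because $\mu\in\Gamma(K)=\Gamma(\ker\pi)$ and because $D=\pi\circ\nabla^k$ by our choice of $k^{\mathrm{th}}$ order connection (Lemma~\ref{rewritingD}), we conclude $D\sigma=\pi(\mu)=0$. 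The projection $E^\prime\to J^{k-1}E\to E$ therefore provides the inverse map.

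The only point that needs an explicit check is that the second (truncated) equation $\delta(\nabla\mu-\kappa\tilde\sigma)=0$ imposes no further restriction beyond $\nabla\tilde\sigma=\mu$. This is not really an obstacle but is the conceptual heart of why a partial splitting is harmless: once $\tilde\sigma=j^{k-1}\sigma$ and $\nabla^k\sigma=\mu$, the computation in the proof of Theorem~\ref{highertheorem} (apply the operator of Proposition~\ref{nextBGG} to $\nabla^k\sigma=\mu$ and use that $\kappa:J^{k-1}E\to\YY^{k+1}\Lambda^1\otimes E$ is pulled back from $\kappa:E\to\YY^{k+1}\Lambda^1\otimes E$ along the canonical projection $J^{k-1}E\to E$) yields $\nabla\mu-\kappa\tilde\sigma=0$ already in $\YY^{k+1}\Lambda^1\otimes E$, so \emph{a fortiori} its image under~$\delta$ vanishes. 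Thus the two maps $\sigma\mapsto(j^{k-1}\sigma,\nabla^k\sigma)$ and $(\tilde\sigma,\mu)\mapsto\sigma$ are mutually inverse, completing the proof.
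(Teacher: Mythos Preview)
Your proof is correct and follows the same approach as the paper, which simply states that the result follows ``evidently'' from Theorem~\ref{highertheorem}. One terminological quibble: the bundle homomorphism $\kappa:J^{k-1}E\to\YY^{k+1}\Lambda^1\otimes E$ is not the pullback along the projection $J^{k-1}E\to E$ (that would force it to factor through~$E$, which it does not for $k\geq 2$) but rather the homomorphism induced by the $(k-1)^{\mathrm{st}}$~order operator via the universal property of jets; your conclusion $\kappa(j^{k-1}\sigma)=\kappa\sigma$ is nonetheless correct and is all that is needed.
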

\noindent The operator $D^\prime$ is manifestly first order with symbol
$$\Lambda^1\otimes E^\prime=
\begin{array}c\Lambda^1\otimes J^{k-1}E\\ \oplus\\
\Lambda^1\otimes K\end{array}
\raisebox{-10pt}{$\xrightarrow{\mbox{\scriptsize
$\left[\begin{array}{cc}{\mathrm{Id}}&0\\ 0&\partial\end{array}\right]$}}$}
\begin{array}c\Lambda^1\otimes J^{k-1}E\\ \oplus\\
\partial(\Lambda^1\otimes K)\end{array}.$$
In particular, the symbol is surjective and its kernel is carried by the
kernel of~$\partial$, which we shall write as~$K^\prime$. In fact, from
(\ref{itspartial}) and (\ref{decomposition}) we see that
$$\textstyle
K^\prime=(\Lambda^1\otimes K)\cap(\bigodot^{k+1}\!\Lambda^1\otimes E).$$
We conclude that if $K^\prime=0$ then $D^\prime$ is a connection. Otherwise we
are now in the realm of first order operators and may construct higher
prolongations as \S\ref{general}. We have proved the following
prolongation theorem.
\begin{theorem}\label{improvedspencer} Suppose
that $D:E\to F$ is a $k^{\mathrm{th}}$ order linear differential operator
between smooth vector bundles. Suppose its symbol
$\bigodot^k\!\Lambda^1\otimes E\to F$ is surjective and write $K$ for its
kernel. Suppose that
$$\textstyle K^\ell\equiv(\bigodot^\ell\!\Lambda^1\otimes K)\cap
(\bigodot^{k+\ell}\!\Lambda^1\otimes E)$$
are vector bundles for all~$\ell$ (we say that $D$ is `regular') and that
$K^\ell=0$ for $\ell$ sufficiently large (we say that $D$ is `finite-type').
Then there is a connection $\nabla$ on the bundle
$$\textstyle{\mathbb{T}}\equiv J^{k-1}E\oplus K\oplus
\bigoplus_{\ell\geq 1}\!K^\ell$$
such that taking the first component ${\mathbb{T}}\to J^{k-1}E\to E$ induces an
isomorphism
$$\{\Sigma\in\Gamma({\mathbb{T}})\mbox{\rm\ s.t.\ }\nabla\Sigma=0\}\cong
\{\sigma\in\Gamma(E)\mbox{\rm\ s.t.\ }D\sigma=0\}.$$
In particular, the solution space of $D$ is finite-dimensional with dimension
bounded by the rank of\/~${\mathbb{T}}$.
\end{theorem}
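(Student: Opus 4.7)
The plan is to reduce the $k$-th order problem to the first-order setting of~\S\ref{general} and then invoke that machinery. First, Theorem~\ref{aftersplitting} replaces $D:E\to F$ with a first-order operator $D':E'\to F'$ on the bundle $E'\equiv J^{k-1}E\oplus K$, whose solutions correspond bijectively to those of $D$ via the canonical projection $E'\to J^{k-1}E\to E$. By construction the symbol of $D'$ is surjective; tracing through the description~(\ref{itspartial}) and the decomposition~(\ref{decomposition}), its kernel, viewed inside $\Lambda^1\otimes E'$ via $K^1\hookrightarrow\Lambda^1\otimes K\hookrightarrow\Lambda^1\otimes E'$, is precisely the bundle
$$K^1=(\Lambda^1\otimes K)\cap\bigl(\textstyle\bigodot^{k+1}\!\Lambda^1\otimes E\bigr)$$
of the theorem statement.

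Having reduced to a first-order operator of surjective symbol, I would apply the iterative scheme of~\S\ref{general} --- repeated invocation of Theorem~\ref{secondtheorem} --- generating a sequence of first-order operators $D^{(\ell)}:E^{(\ell)}\to F^{(\ell)}$ with $D^{(1)}=D'$, $E^{(1)}=E'$ and $E^{(\ell+1)}=E^{(\ell)}\oplus K^{(\ell)}$, where $K^{(\ell)}$ denotes the kernel of the (surjective) symbol of~$D^{(\ell)}$. Theorem~\ref{secondtheorem} applied at each stage provides a canonical isomorphism between the solution spaces of $D^{(\ell+1)}$ and $D^{(\ell)}$, and hence, by composition, with that of~$D$.

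The central technical step is to establish by induction on $\ell$ that the iteratively produced kernel $K^{(\ell)}$ agrees with the bundle $K^\ell$ defined in the statement, i.e.\
$$K^{(\ell)}=\bigl(\textstyle\bigodot^\ell\!\Lambda^1\otimes K\bigr)\cap\bigl(\bigodot^{k+\ell}\!\Lambda^1\otimes E\bigr)=K^\ell.$$
The base case was verified above. The inductive step uses the fact that the symbol of $D^{(\ell)}$ has the block-diagonal shape of~(\ref{symbolofDtilde}), so that $K^{(\ell)}$ sits inside the $\Lambda^1\otimes K^{(\ell-1)}$-summand of $\Lambda^1\otimes E^{(\ell)}$ and the Section~\ref{general} recipe collapses to
$$K^{(\ell+1)}=(\Lambda^1\otimes K^{(\ell)})\cap\bigl(\textstyle\bigodot^2\!\Lambda^1\otimes K^{(\ell-1)}\bigr)$$
inside $\bigotimes^2\!\Lambda^1\otimes K^{(\ell-1)}$. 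Unpacking this with the inductive formul\ae\ for $K^{(\ell)}$ and $K^{(\ell-1)}$ amounts to combining two overlapping symmetrisation conditions which together force full $(\ell+1)$-fold symmetrisation on the new indices and $(k+\ell+1)$-fold symmetrisation across all indices, the $K$-condition being inherited from either factor. This multilinear bookkeeping is elementary but is the principal obstacle in the argument.

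Given this identification, the finite-type hypothesis forces $K^{(\ell_0)}=K^{\ell_0}=0$ for some~$\ell_0$, whereupon $D^{(\ell_0)}$ has surjective symbol with zero kernel and so is an ordinary connection $\nabla$ on
$$E^{(\ell_0)}=J^{k-1}E\oplus K\oplus\bigoplus_{\ell=1}^{\ell_0-1}\!K^\ell=\mathbb{T}.$$
Composing the solution-space isomorphisms at each of the $\ell_0$ prolongation steps with the projection $\mathbb{T}\to J^{k-1}E\to E$ yields the claimed bijection between $\nabla$-parallel sections of $\mathbb{T}$ and solutions of $D\sigma=0$; the dimensional bound follows at once.
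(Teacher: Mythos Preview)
Your proposal is correct and follows precisely the route taken in the paper: reduce to the first-order operator $D'$ of Theorem~\ref{aftersplitting}, identify the kernel of its symbol as~$K^1$, and then iterate the procedure of~\S\ref{general}. Your inductive verification that the iterated symbol kernels coincide with the $K^\ell$ of the statement is more explicit than the paper's treatment, which leaves this ``details are left to the reader'' step implicit, but the argument is the same.
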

It is shown in \cite{bceg} that there is an extensive class of geometrically
defined symbols both on manifolds with no further structure and on Riemannian
manifolds, which belong to operators necessarily of finite-type and for which
the bundles $K^\ell$ can be computed and their dimensions determined. In
\S\ref{geometric} we shall derive corresponding results for our modified
prolongation procedure on contact manifolds.

Although the classical approach by means of jets \cite{spencer} does not reach 
Theorem~\ref{improvedspencer}, it is useful to see how far it goes.
Firstly, there is a canonical
inclusion $J^{k+1}E\hookrightarrow J^1J^kE$ for any smooth vector bundle $E$
corresponding to the composition of differential operators
$$E\xrightarrow{\,j^k\,}J^kE\xrightarrow{\,j^1\,}J^1J^kE.$$
Secondly, as we already observed following Proposition~\ref{nextBGG},
the jet exact sequence (\ref{jes}) induces a canonical inclusion 
$$\textstyle\bigodot^{k+1}\!\Lambda^1\otimes E\hookrightarrow
\Lambda^1\otimes\bigodot^k\!\Lambda^1\otimes E\hookrightarrow
\Lambda^1\otimes J^kE.$$
Goldschmidt~\cite[Proposition~3]{goldschmidt} shows that there is a canonical 
isomorphism 
\begin{equation}\label{canonicalisomorphism}
\frac{J^1J^kE}{J^{k+1}E}\cong
\frac{\Lambda^1\otimes J^kE}{\bigodot^{k+1}\!\Lambda^1\otimes E}.
\end{equation}
Let us write $W^kE$ for the vector bundle defined by either side of this
isomorphism. The operator ${\mathcal{G}}:J^kE\to W^kE$ is then induced by the
universal differential operator $j^1:J^kE\to J^1J^kE$ and the differential
operator $\widetilde{D}$ in Theorem~\ref{highertheorem} invariantly defined as
the restriction of ${\mathcal{G}}$ to $E^\prime$ where $E^\prime\subset J^kE$
in accordance with~(\ref{commdiag_k}). To proceed further, the classical
approach is either to assume that the range of $\widetilde{D}$ is the same as
the range of its symbol (this is the first criterion for the system $D$ to be
{\em compatible\/} or {\em formally integrable\/} in the sense of
Goldschmidt~\cite{goldschmidt}) in which case there is no need to choose a
splitting $\delta$ in order to obtain Theorem~\ref{aftersplitting} or, instead,
to prolong the original operator $D:E\to F$ of order $k$ to an operator
$D^\ell:E\to J^\ell F$ of order $k+\ell$ and then use
(\ref{canonicalisomorphism}) to construct a first order operator with injective
symbol in the finite-type case for $\ell$ sufficiently large. This latter
approach is carried out by Neusser~\cite{neusser} on general filtered
manifolds, including contact manifolds as a special case.

It is illuminating to write out the Goldschmidt operator ${\mathcal{G}}$ using
a connection on $E$ coupled with a flat torsion-free connection on~$\Lambda^1$ 
as can be arranged locally (whilst maintaining a preferred connection 
on~$E$). Writing $\nabla_a$ for all these connections and using them to 
trivialise the jet bundles $J^kE$, the second Spencer operator (\ref{S}) yields
\begin{equation}\label{firstgoldschmidt}
J^1E\ni\left[\begin{array}c\sigma\\ \mu_b\end{array}\right]
\stackrel{{\mathcal{G}}}{\longmapsto}
\left[\begin{array}c\nabla_a\sigma-\mu_a\\
\nabla_{[a}\mu_{b]}-\kappa_{ab}\sigma\end{array}\right]\in W^1E,\end{equation}
as is familiar~(\ref{defofDtilde}), whilst the third Spencer operator is 
straightforwardly computed to be
$$J^3E\ni
\left[\begin{array}c\sigma\\ \mu_b\\ \rho_{bc}\\ \tau_{bcd}\end{array}\right]
\stackrel{{\mathcal{S}}}{\longmapsto}
\left[\begin{array}c\nabla_a-\mu_a\\
\nabla_a\mu_b-\kappa_{ab}\sigma-\rho_{ab}\\
\nabla_a\rho_{bc}
-\kappa_{ab}\mu_c
-\kappa_{ac}\mu_b
-\frac{1}{3}(\nabla_c\kappa_{ab})\sigma
-\frac{1}{3}(\nabla_b\kappa_{ac})\sigma
-\tau_{abc}
\end{array}\right]$$
and yields
\begin{equation}\label{secondgoldschmidt}J^2E\ni
\left[\begin{array}c\sigma\\ \mu_b\\ \rho_{bc}\end{array}\right]
\stackrel{{\mathcal{G}}}{\longmapsto}
\left[\begin{array}c\nabla_a-\mu_a\\
\nabla_a\mu_b-\kappa_{ab}\sigma-\rho_{ab}\\
\nabla_{[a}\rho_{b]c}
-\kappa_{ab}\mu_c
+\kappa_{a[b}\mu_{c]}
-\frac{1}{3}(\nabla_c\kappa_{ab})\sigma
+\frac{1}{3}(\nabla_{[b}\kappa_{c]a})\sigma
\end{array}\right].\end{equation}

\section{Contact prolongation for higher order operators}\label{highercontact}
Our first task is to explain what it means for a higher order differential
operator to be {\em compatible\/} with a contact structure. For
$1^{\mathrm{st}}$ or $2^{\mathrm{nd}}$ order operators, compatibility was 
defined in \S\ref{contact} as a restriction on the symbol, namely that it factor 
through
$$\textstyle\Lambda^1\otimes E\to\Lambda_H^1\otimes E\quad\mbox{or}\quad
\bigodot^2\!\Lambda^1\otimes E\to\bigodot^2\!\Lambda_H^1\otimes E,$$
respectively. For a $k^{\mathrm{th}}$ order operator, having the symbol factor 
through 
$$\textstyle
\bigodot^k\!\Lambda^1\otimes E\to\bigodot^k\!\Lambda_H^1\otimes E,$$
is a necessary but not sufficient condition for compatibility. To proceed, let
us recall \cite{spencer} the definition of the fibre of the $k^{\mathrm{th}}$
order jet bundle $J^kE$ at a point $x$ as the space of germs of smooth sections
of $E$ at $x$ modulo those that vanish to order~$k+1$. Also recall that the
notion of vanishing to a certain order is defined componentwise with respect to
any local trivialisation of $E$ and that a function $f$ vanishes to order $k+1$
at $x$ if and only if $X_1X_2\cdots X_\ell f|_x=0$ for any vector fields
$X_1,\ldots,X_\ell$ defined near $x$ and for any $\ell\leq k+1$. Following
Morimoto~\cite{morimoto}, we define the weighted jet bundles $J_H^kE$ in
exactly the same manner except that we require the vector fields
$X_1,\ldots,X_\ell$ to lie in the contact distribution. As a less stringent
requirement, this defines a larger subspace of the germs and so there is a
natural surjection of bundles $J^kE\to J_H^kE$. We now define compatibility of
a $k^{\mathrm{th}}$ order linear differential operator $D:E\to F$ with the
contact structure to mean that the corresponding homomorphism of vector bundles
$D:J^kE\to F$ factor through $J^kE\to J_H^kE$. The usual jet exact sequence
$$\textstyle 0\to\bigodot^k\!\Lambda^1\otimes E\to J^kE\to J^{k-1}E\to 0$$
is derived from the canonical isomorphisms
$$({\textstyle\bigodot^k\!\Lambda^1})_x\cong
\frac{\{f\mbox{ s.t.\ }X_1\cdots X_\ell f|_x=0,\,\forall
\mbox{ vector fields and }\forall\,\ell\leq k\}}
{\{f\mbox{ s.t.\ }X_1\cdots X_\ell f|_x=0,\,\forall
\mbox{ vector fields and }\forall\,\ell\leq k+1\}}$$
induced by $\phi_{ab\cdots d}\mapsto\phi_{ab\cdots d}x^ax^b\cdots x^d$ for any
local co\"ordinates $(x_1,x_2,\cdots,x_m)$ centred on~$x$. We have already
mentioned (\ref{weightedjets}) that there is a corresponding exact sequence
$$0\to S_\perp^k\otimes E\to J_H^k E\to J_H^{k-1}\to 0.$$
for weighted jets, where $S_\perp^k$ is defined 
by~(\ref{intrinsicdefinitionofSperp}) and has the form given by 
(\ref{Sperptwo}), (\ref{anotherway}), (\ref{exdecomp}), and so on. 
It is derived from the canonical isomorphisms
$$(S_\perp^k)_x\cong
\frac{\{f\mbox{ s.t.\ }X_1\cdots X_\ell f|_x=0,\,\forall
\mbox{ contact fields and }\forall\,\ell\leq k\}}
{\{f\mbox{ s.t.\ }X_1\cdots X_\ell f|_x=0,\,\forall
\mbox{ contact fields and }\forall\,\ell\leq k+1\}}$$
induced by using Darboux local co\"ordinates $(x^1,x^2,\cdots,x^{2n},z)$
instead (for example, 
$(P_{abcd},Q_{ab},R)\mapsto P_{abcd}x^ax^bx^cx^d+Q_{ab}x^ax^bz+Rz^2$ for 
$S_\perp^4$ as in~(\ref{exdecomp})).

The commutative diagram 
$$\begin{array}{ccccccccc}
0&\to&\Lambda^1\otimes E&\to&J^1E&\to&E&\to&0\\
&&\downarrow&&\downarrow&&\|\\
0&\to&\Lambda_H^1\otimes E&\to&J_H^1E&\to&E&\to&0\\
&&\downarrow&&\downarrow\\
&&0&&0\end{array}$$
with exact rows and columns shows that a first order differential operator
$D:E\to F$ is compatible with the contact structure as defined above if and
only if its symbol factors through $\Lambda_H^1\otimes E$ as defined 
in~\S\ref{contact}. Similarly, the commutative diagram
$$\begin{array}{ccccccccc}
0&\to&\bigodot^2\!\Lambda^1\otimes E&\to&J^2E&\to&J^1E&\to&0\\
&&\downarrow&&\downarrow&&\|\\
0&\to&\bigodot^2\!\Lambda_H^1\otimes E&\to&J_H^2E&\to&J^1E&\to&0\\
&&\downarrow&&\downarrow\\
&&0&&0\end{array}$$
with exact rows and columns shows that a second order $D:E\to F$ is compatible
with the contact structure as defined above if and only if its symbol factors
through $\bigodot^2\!\Lambda_H^1\otimes E$ as defined in~\S\ref{contact}. For
higher order operators there is no such equivalence because $J^kE\to J^{k-1}E$
does not factor through $J_H^kE$ for $k\geq 3$. 

For a $k^{\mathrm{th}}$~order operator $D:E\to F$ compatible with the contact 
structure, the {\em enhanced symbol\/} of $D$ is defined to be the composition
$$S_\perp^k\otimes E\to J_H^kE\xrightarrow{\,D\,}F.$$
Its invariance extends Proposition~\ref{invarianceofsecondenhancedsymbol} for
second order operators. In line with~(\ref{piH}), we shall write $\pi_H$ for
the enhanced symbol of $D$ and suppose that it is surjective.

Our next task is to generalise Theorems~\ref{firstcontacttheorem}
and~\ref{firstthreeDcontacttheorem} for contact compatible higher order
operators in the same way that Theorem~\ref{highertheorem} generalises
Theorem~\ref{firsttheorem} for higher order operators in the absence of extra
structure. The following commutative diagram with exact rows and columns
extends (\ref{diagJH}) and defines the bundle~$E_H^\prime$ parallel to the
definition of $E^\prime$ via (\ref{commdiag_k}) in the general case.
$$\begin{array}{ccccccccc} &&0&&0\\
&&\downarrow&&\downarrow\\
0&\to&K_H&\to&E_H^\prime&\to&J_H^{k-1}E&\to&0\\
&&\downarrow&&\downarrow&&\|\\
0&\to&S_\perp^k\otimes E&\to&J_H^kE&\to&J_H^{k-1}E&\to&0\\
&&\makebox[0pt]{\scriptsize$\pi_H\;\;$}\downarrow\makebox[0pt]{}&&
\makebox[0pt]{\scriptsize$D\,$}\downarrow\makebox[0pt]{}\\
&&F&=&F\\
&&\downarrow&&\downarrow\\
&&0&&0
\end{array}$$
Let us first approach the contact version of Theorem~\ref{highertheorem} via 
weighted jet constructions and then make these constructions more explicit by 
means of partial connections.

Proposition~1 of~\cite{neusser} may be interpreted as the existence of a 
canonical isomorphism
$$\frac{J_H^1J_H^kE}{J_H^{k+1}E}\cong
\frac{\Lambda_H^1\otimes J_H^kE}{S_\perp^{k+1}\otimes E}$$
parallel to (\ref{canonicalisomorphism}) in the general case and we shall
denote by $W_H^kE$ the vector bundle defined by either side of this
isomorphism. It follows that there is a canonically defined compatible first
order linear differential operator ${\mathcal{G}}_H:J_H^kE\to W_H^kE$ induced
by the universal first order contact compatible operator 
$j_H^1:J_H^kE\to J_H^1J_H^kE$. If we define an operator 
$\widetilde{D}_H:E_H^\prime\to W_H^kE$ as the restriction of~${\mathcal{G}}_H$,
then we might expect the following result analogous to 
Theorem~\ref{highertheorem}.
\begin{theorem}\label{highercontacttheorem}
There is a contact compatible first order linear differential operator
$$\widetilde{D}_H:E_H^\prime\longrightarrow W_H^kE$$
so that the canonical projection $E_H^\prime\to J_H^{k-1}E\to E$ induces an
isomorphism
$$\{\Sigma\in\Gamma(E_H^\prime)\mbox{\rm\ s.t.\ }\widetilde{D}_H\Sigma=0\}\cong
\{\sigma\in\Gamma(E)\mbox{\rm\ s.t.\ }D\sigma=0\}.$$
\end{theorem}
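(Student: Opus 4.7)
The plan is to define $\widetilde{D}_H$ as the restriction of the weighted Goldschmidt operator ${\mathcal{G}}_H:J_H^kE\to W_H^kE$ to the subbundle $E_H^\prime\subset J_H^kE$, with no further choices needed; contact compatibility and first order nature are then inherited from~${\mathcal{G}}_H$. The bijection with solutions of $D\sigma=0$ will reduce to the single key fact that the kernel of~${\mathcal{G}}_H$ consists precisely of the holonomic weighted jets $j_H^k\sigma$ for $\sigma\in\Gamma(E)$.

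First I would unpack ${\mathcal{G}}_H$ as the composition
$$J_H^kE\xrightarrow{\,j_H^1\,}J_H^1J_H^kE\twoheadrightarrow
J_H^1J_H^kE/J_H^{k+1}E=W_H^kE,$$
so that ${\mathcal{G}}_H\Sigma$ vanishes at a point exactly when $j_H^1\Sigma$ lies pointwise in the subbundle $J_H^{k+1}E\hookrightarrow J_H^1J_H^kE$. This inclusion is the one induced by the composition of universal operators $E\xrightarrow{j_H^k}J_H^kE\xrightarrow{j_H^1}J_H^1J_H^kE$, which factors through $J_H^{k+1}E$ because $j_H^1\circ j_H^k=j_H^{k+1}$. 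The key claim is that this constraint on $j_H^1\Sigma$ forces $\Sigma=j_H^k\sigma$ globally, with the corresponding $\sigma\in\Gamma(E)$ recovered as the image of $\Sigma$ under the projection $J_H^kE\to E$.

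Granting this, the bijection is routine. Since $D$ is contact compatible, it descends to a bundle map $D:J_H^kE\to F$ with $D\circ j_H^k:E\to F$ the original operator and $E_H^\prime=\ker D$. If $\Sigma\in\Gamma(E_H^\prime)$ satisfies $\widetilde{D}_H\Sigma=0$, then ${\mathcal{G}}_H\Sigma=0$ gives $\Sigma=j_H^k\sigma$ for a unique $\sigma\in\Gamma(E)$, and $D\Sigma=0$ gives $D\sigma=0$. Conversely, $D\sigma=0$ makes $j_H^k\sigma$ a section of $E_H^\prime$ with $\widetilde{D}_H(j_H^k\sigma)={\mathcal{G}}_H(j_H^k\sigma)=0$, because $j_H^1(j_H^k\sigma)=j_H^{k+1}\sigma$ lies in $J_H^{k+1}E$. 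The projection $E_H^\prime\to E$ inverts $\sigma\mapsto j_H^k\sigma$, so the correspondence is bijective.

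The main obstacle is the holonomicity characterisation ${\mathcal{G}}_H\Sigma=0\Leftrightarrow\Sigma=j_H^k\sigma$, the weighted analogue of classical Spencer exactness $E\xrightarrow{j^k}J^kE\xrightarrow{\mathcal{S}}\Lambda^1\otimes J^{k-1}E$. The needed input is Neusser's Proposition~1, which supplies the isomorphism $J_H^1J_H^kE/J_H^{k+1}E\cong\Lambda_H^1\otimes J_H^kE/(S_\perp^{k+1}\otimes E)$, legitimising ${\mathcal{G}}_H$ as a weighted Goldschmidt operator. Granting this, holonomicity for weighted jets is deduced exactly as in the unweighted case, since the argument uses only the universal property of $j_H^{k+1}$ together with the fact that a smooth section of $E$ determines smooth sections of every $J_H^\ell E$.
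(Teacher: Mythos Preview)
Your approach is essentially the paper's own: define $\widetilde{D}_H$ as the restriction of the weighted Goldschmidt operator ${\mathcal{G}}_H$ to $E_H^\prime\subset J_H^kE$, and reduce the bijection to the holonomicity statement $\ker{\mathcal{G}}_H=\im j_H^k$, which the paper likewise attributes to Neusser. So the strategy is the same and the argument is correct.

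One small remark on your final paragraph: the cleanest way to extract holonomicity from ${\mathcal{G}}_H\Sigma=0$ is not via ``the universal property of $j_H^{k+1}$'' directly, but by observing that the weighted Spencer operator ${\mathcal{S}}_H:J_H^kE\to\Lambda_H^1\otimes J_H^{k-1}E$ is a quotient of~${\mathcal{G}}_H$ (coming from the short exact sequence $0\to\YY_H^{k+1}\otimes E\to W_H^kE\to\Lambda_H^1\otimes J_H^{k-1}E\to 0$ induced by the weighted jet sequence). Hence ${\mathcal{G}}_H\Sigma=0$ implies ${\mathcal{S}}_H\Sigma=0$, and weighted Spencer exactness (the genuine analytic input, proved in Neusser's paper just as in the unweighted case) then forces $\Sigma=j_H^k\sigma$. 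Phrasing it this way makes clear exactly which nontrivial ingredient you are invoking.
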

This theorem is essentially proved in~\cite{neusser} by reasoning with jets. In
order to make the definition of $\widetilde{D}_H$ and this reasoning more
explicit, we might expect to use higher partial connections by analogy with the
constructions of~\S\ref{highergeneral}. However, it turns out that this is not
quite sufficient and we shall also need the adapted connections of
Proposition~\ref{Tconn} or, locally, the Darboux
connections~(\ref{Darbouxconnection}).

Before carrying this out, let us observe that 
Theorem~\ref{highercontacttheorem} is sufficient to start an inductive contact 
prolongation in dimension $\geq 5$. The restricted symbol of ${\mathcal{G}}_H$
$$\Lambda_H^1\otimes J_H^kE\longrightarrow W_H^kE=
\frac{\Lambda_H^1\otimes J_H^kE}{S_\perp^{k+1}\otimes E}$$
is the canonical projection with $S_\perp^{k+1}\otimes E$ as kernel. 
Therefore, the restricted symbol of $\widetilde{D}_H$ is the composition
\begin{equation}\label{restrictedsymbol}
\Lambda_H^1\otimes E_H^\prime\hookrightarrow\Lambda_H^1\otimes J_H^kE
\longrightarrow\frac{\Lambda_H^1\otimes J_H^kE}{S_\perp^{k+1}\otimes E}=W_H^kE
\end{equation}
with $(\Lambda_H^1\otimes K_H)\cap(S_\perp^{k+1}\otimes E)$ as kernel. But 
this is what we should define to be $K_H^\prime$, suppose that it has constant 
rank, and, following the method of \S\ref{contact}, 
\begin{itemize}
\item write $F_H^\prime$ for range of the composition~(\ref{restrictedsymbol});
\item choose a complementary subbundle to $F_H^\prime\subseteq W_H^kE$;
\item define $D_H^\prime:E_H^\prime\to F_H^\prime$ as the resulting projection 
of $\widetilde{D}_H$. 
\end{itemize}
Then $D_H^\prime$ is a first order operator compatible with the contact 
structure, having
\begin{equation}\label{KHprime}
K_H^\prime\equiv(\Lambda_H^1\otimes K_H)\cap(S_\perp^{k+1}\otimes E)
\end{equation}
as the kernel of its restricted symbol. As in \S\ref{contact}, further
prolongation of this first order operator gives first order operators 
$D_H^{\ell}:E_H^{\ell}\to F_H^{\ell},\,\forall\ell\geq 1$ such that
\begin{equation}\label{KHl}
K_H^\ell\equiv(S_\perp^\ell\otimes K_H)\cap(S_\perp^{k+\ell}\otimes E)
\end{equation}
is realised as the kernel of the restricted symbol of~$D_H^\ell$. 

Clearly, we are heading for contact version of Theorem~\ref{improvedspencer}
where the vanishing of (\ref{KHl}) for $\ell$ sufficiently large is the contact
version of finite-type. The final statement is
Theorem~\ref{finalcontacttheorem}. It remains to sort out the $3$-dimensional
case but, before we do so, let us make more explicit the constructions given
above with weighted jets.

We shall content ourselves with a formula for ${\mathcal{G}}_H:J_H^2E\to
W_H^2E$ written in terms of a general partial connection on $E$ and a local
Darboux connection (\ref{Darbouxconnection}) on~$\Lambda^1$. The partial
connection on $E$ canonically lifts to a full connection by
Proposition~\ref{modcons} and the remaining commutation relation for the tensor
connection on $\Lambda^1\otimes E$ is that
\begin{equation}\label{contactcommutation}
\nabla_{[a}\nabla_{b]}\sigma=\kappa_{ab}\sigma-L_{ab}\nabla_0\sigma
\qquad\mbox{for all $\sigma\in\Gamma(E)$}.\end{equation}
The first Goldschmidt operator ${\mathcal{G}}_H:J_H^1E\to W_H^1E$ is then given
by
$$\left[\begin{array}c\sigma\\ \mu_b\end{array}\right]
\longmapsto
\left[\begin{array}c\nabla_a\sigma-\mu_a\\
\nabla_{[a}\mu_{b]}-\kappa_{ab}\sigma\end{array}\right]$$
and its restriction to $E_H^\prime$ coincides with~(\ref{DtildeH}) from the
proof of Theorem~\ref{firstcontacttheorem}. The only difference from
(\ref{firstgoldschmidt}) is that the second entry on the right hand side has 
values in $\Lambda^2_{H\perp}$ and therefore does not see $L_{ab}$
from~(\ref{contactcommutation}). Similarly, the second Goldschmidt operator 
${\mathcal{G}}_H:J_H^2E\to W_H^2E$ is a simple variation on 
(\ref{secondgoldschmidt}) given by
$$\left[\begin{array}c\sigma\\ \mu_b\\ \rho_{bc}+L_{bc}\nu\end{array}\right]
\longmapsto
\left[\begin{array}c\nabla_a-\mu_a\\
\nabla_a\mu_b-\kappa_{ab}\sigma+L_{ab}\nu-\rho_{ab}\\
\nabla_{[a}\rho_{b]c}
-\kappa_{ab}\mu_c
+\kappa_{a[b}\mu_{c]}
-\frac{1}{3}(\nabla_c\kappa_{ab})\sigma
+\frac{1}{3}(\nabla_{[b}\kappa_{c]a})\sigma\\
{}+L_{c[a}\nabla_{b]}\nu-L_{[ab}\nabla_{c]}\nu
\phantom{-\frac{1}{3}(\nabla_c\kappa_{ab})\sigma
+\frac{1}{3}(\nabla_{[b}\kappa_{c]a})\sigma}
\end{array}\right].$$

In the three-dimensional case, we have already seen in \S\ref{three} how to
prolong a first order operator compatible with the contact structure. If
$D:E\to F$ is a compatible operator of order $k\geq 2$, let us still consider
the first order compatible operator $D_H^\prime:E_H^\prime\to F_H^\prime$
constructed above. For this construction, there is nothing special about the
three-dimensional case and the kernel of its restricted symbol is $K_H^\prime$
as in~(\ref{KHprime}). Now if we choose partial connections on $E$ and 
$\Lambda_H^1$, then we have
\begin{equation}\label{DHprime}
D_H^\prime:\begin{array}cJ_H^{k-2}E\\ \oplus\\ S_\perp^{k-1}\otimes E\\ 
\oplus\\ K_H\end{array}
\longrightarrow
\begin{array}c\Lambda_H^1\otimes J_H^{k-2}E\\ \oplus\\ 
\Lambda_H^1\otimes S_\perp^{k-1}\otimes E\;,\\ \oplus\\
(\Lambda_H^1\otimes K_H)/K_H^\prime
\end{array}\end{equation}
a first order differential operator of the form
$$\left[\begin{array}c\sigma\\ \mu\\ 
\rho\end{array}\right]\stackrel{D_H^\prime}{\longmapsto}
\left[\begin{array}c\nabla_H\sigma-\mu\\ \nabla_H\mu+L(\sigma)-\rho\\ 
\nabla_H\rho+M(\sigma,\mu)\enskip\bmod K_H^{\prime\prime}\end{array}\right]$$
for some smooth homomorphisms 
$$L:J_H^{k-2}E\to \Lambda_H^1\otimes S_\perp^{k-1}\otimes E\quad\mbox{and}\quad 
M:J_H^{k-2}E\oplus(S_\perp^{k-1}\otimes E)\to\Lambda_H^1\otimes K_H.$$
But, since $K_H\subseteq S_\perp^k\otimes E$ and 
$$(\Lambda_H^1\otimes S_\perp^k)\cap(S_\perp^3\otimes 
S_\perp^{k-2})=S_\perp^{k+1},\enskip\mbox{for }k\geq2$$
we see that we can rewrite $K_H^\prime$ as
$$(\Lambda_H^1\otimes K_H)\cap(S_\perp^3\otimes J_H^{k-2}E).$$
Therefore, viewed as in~(\ref{DHprime}), the operator $D_H^\prime$ is precisely
of the type to which Proposition~\ref{iterum} below may be applied. Thus starts
an iterative process for building contact prolongations of~$D$, crucially
employing that $D$ is of order at least~$2$.

The following lemma is convenient for the proof of Proposition~\ref{iterum}.

\begin{lemma}\label{tautology}
Suppose ${\mathcal{P}}:V\to W$ is a first order operator on a contact manifold
compatible with the contact structure. Suppose its partial symbol
$\pi:\Lambda_H^1\otimes V\to W$ has constant rank. Then we can find a partial
connection $\nabla_H$ on $V$ and a homomorphism $\theta:V\to W$ such that
$${\mathcal{P}}=\pi\circ\nabla_H+\theta.$$
\end{lemma}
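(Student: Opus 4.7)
The plan is to exploit the fact that $\mathcal{P}$ being compatible with the contact structure means that, as a bundle homomorphism, it factors through the weighted first jet bundle: $\mathcal{P}$ is encoded by a bundle map $P:J_H^1V\to W$ via $\mathcal{P}\sigma=P(j_H^1\sigma)$. The strategy is then to pick an arbitrary partial connection, use it to split the weighted first jet sequence, and read off the desired decomposition of $P$.

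Concretely, I would begin by fixing any partial connection $\nabla_H:V\to\Lambda_H^1\otimes V$; such objects exist globally, for example by patching together local Darboux connections via a partition of unity. By the definition of the weighted first jet bundle and the exact sequence
$$0\to\Lambda_H^1\otimes V\to J_H^1V\to V\to 0,$$
a choice of partial connection is exactly a choice of splitting of this sequence, so that $J_H^1V\cong V\oplus(\Lambda_H^1\otimes V)$ and the weighted jet of a section $\sigma$ decomposes as $j_H^1\sigma=(\sigma,\nabla_H\sigma)$ under this identification.

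Next I would decompose the bundle map $P:J_H^1V\to W$ according to this splitting, writing $P=\theta\oplus\pi'$ with $\theta:V\to W$ and $\pi':\Lambda_H^1\otimes V\to W$. By the very definition of the partial symbol of a contact-compatible first order operator, the restriction of $P$ to the subbundle $\Lambda_H^1\otimes V\subset J_H^1V$ is $\pi$, so $\pi'=\pi$. Substituting the splitting of $j_H^1\sigma$ then gives, for every $\sigma\in\Gamma(V)$,
$$\mathcal{P}\sigma=P(j_H^1\sigma)=\theta(\sigma)+\pi(\nabla_H\sigma),$$
which is exactly the asserted identity $\mathcal{P}=\pi\circ\nabla_H+\theta$.

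There is essentially no serious obstacle here: the lemma is a tautology once one interprets a contact-compatible first order operator as a homomorphism out of $J_H^1V$ and a partial connection as a splitting of the jet sequence. The constant-rank hypothesis on $\pi$ is not needed for the decomposition itself; it plays its role only later, when the image of $\pi$ is used as a subbundle of $W$. The only minor point to verify carefully is the global existence of a partial connection, which follows by the standard partition-of-unity argument once local partial connections are produced (e.g.\ from the Darboux connections~(\ref{Darbouxconnection})).
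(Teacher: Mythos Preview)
Your proof is correct, and it takes a genuinely different route from the paper's. The paper proceeds by first using the constant-rank hypothesis: it chooses a complement $C$ to the range $R$ of $\pi$ inside $W$, defines $\theta$ as the composition of $\mathcal{P}$ with projection onto $C$ (which is a homomorphism precisely because projecting to $C$ kills the partial symbol), and then observes that $\mathcal{P}-\theta$ is a compatible first order operator $V\to R$ with \emph{surjective} partial symbol, so that the analogue of Lemma~\ref{choosesplitting} furnishes a partial connection with $\mathcal{P}-\theta=\pi\circ\nabla_H$.

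Your argument is more direct: you pick an arbitrary partial connection first, use it to split the weighted jet sequence, and read off $\theta$ and $\pi$ from the resulting decomposition of the bundle map $P:J_H^1V\to W$. This has the pleasant feature that the constant-rank hypothesis is visibly unused, confirming your remark that it only matters downstream. The paper's route, on the other hand, produces a $\theta$ with values in a chosen complement to $\im\pi$; this extra normalisation is not exploited in the subsequent application (Proposition~\ref{iterum}), so nothing is lost by your approach. Either way the lemma is, as its label suggests, essentially tautological.
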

\begin{proof} Choose a complement $C$ to the range $R$ of $\pi$ in $W$ and take
$\theta:V\to W$ to be the composition of ${\mathcal{P}}$ with projection 
to~$C$. It is a homomorphism of vector bundles and ${\mathcal{P}}-\theta$ is a 
compatible first order operator $V\to R$, which therefore may be written as 
$\pi\circ\nabla_H$ for an appropriate choice of partial connection~$\nabla_H$.
\end{proof}
As presaged in~\S\ref{three}, we now formulate and prove a result that can be 
iterated in $3$-dimensions to give the prolongations we require. The method of 
proof is a variation on the discussion in \S\ref{three} but there are some 
extra points to be borne in mind, namely 
\begin{itemize}
\item that equality $K_H^\prime=\Lambda_H^1\otimes K_H$ is
weakened to inclusion $K_H^\prime\subseteq\Lambda_H^1\otimes K_H$;
\item that an extra linear term $L(\sigma)$ is allowed in the definition 
of~$D_H$.
\end{itemize}
\begin{proposition}\label{iterum}
Suppose $E$ is a smooth vector bundle on a $3$-dimensional contact manifold. 
Suppose we are given subbundles $K_H\subseteq\Lambda_H^1\otimes E$ and 
$K_H^\prime\subseteq\Lambda_H^1\otimes K_H$ such that 
$K_H^{\prime\prime}
\equiv(\Lambda_H^1\otimes K_H^\prime)\cap(S_\perp^3\otimes E)\subset
\bigotimes^3\!\Lambda_H^1\otimes E$ is also a subbundle.
Suppose
$$D_H:\begin{array}cE\\ \oplus\\ K_H\\ \oplus\\ K_H^\prime\end{array}
\longrightarrow
\begin{array}c\Lambda_H^1\otimes E\\ \oplus\\ \Lambda_H^1\otimes K_H\\ \oplus\\
(\Lambda_H^1\otimes K_H^\prime)/K_H^{\prime\prime}
\end{array}$$
is a first order differential operator of the form
$$\left[\begin{array}c\sigma\\ \mu\\ 
\rho\end{array}\right]\stackrel{D_H}{\longmapsto}
\left[\begin{array}c\nabla_H\sigma-\mu\\ \nabla_H\mu+L(\sigma)-\rho\\ 
\nabla_H\rho+M(\sigma,\mu)\enskip\bmod K_H^{\prime\prime}\end{array}\right]$$
for some partial connections on $E$, $K_H$, and $K_H^\prime$ and smooth 
homomorphisms 
$$L:E\to \Lambda_H^1\otimes K_H\quad\mbox{and}\quad 
M:E\oplus K_H\to\Lambda_H^1\otimes K_H^\prime.$$
Finally suppose that $K_H^{\prime\prime\prime}
\equiv(\Lambda_H^1\otimes K_H^{\prime\prime})\cap(S_\perp^4\otimes E)\subset
\bigotimes^4\!\Lambda_H^1\otimes E$ is a subbundle.
Then we can find a differential operator
$$D_H^\prime:\begin{array}cE\\ \oplus\\ K_H\\ \oplus\\ K_H^\prime\\ \oplus\\
K_H^{\prime\prime}\end{array}
\longrightarrow
\begin{array}c\Lambda_H^1\otimes E\\ \oplus\\ \Lambda_H^1\otimes K_H\\ \oplus\\
\Lambda_H^1\otimes K_H^\prime\\ \oplus\\
(\Lambda_H^1\otimes K_H^{\prime\prime})/K_H^{\prime\prime\prime}
\end{array}$$
of the form
$$\left[\begin{array}c\sigma\\ \mu\\ \rho\\ \tau\end{array}\right]
\stackrel{D_H^\prime}{\longmapsto}
\left[\begin{array}c\nabla_H\sigma-\mu\\ \nabla_H\mu+L(\sigma)-\rho\\ 
\nabla_H\rho+M(\sigma,\mu)-\tau\\
\nabla_H\tau+N(\sigma,\mu,\rho)\enskip\bmod K_H^{\prime\prime\prime}
\end{array}\right]$$
with the same kernel as~$D_H$. 
\end{proposition}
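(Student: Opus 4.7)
My plan is to imitate the derivation of the prolonged system~(\ref{finalprolongation}) in \S\ref{three}, adapted to the weaker hypothesis that $K_H^\prime$ may be a proper subbundle of $\Lambda_H^1\otimes K_H$ and that the second equation carries an extra linear term $L(\sigma)$. Given $D_H\Sigma=0$ with $\Sigma=(\sigma,\mu,\rho)$, the third equation is equivalent to the existence of $\tau\in\Gamma(K_H^{\prime\prime})$ with $\nabla_H\rho+M(\sigma,\mu)=\tau$. The entire task is to manufacture a first order equation for $\tau$ whose partial symbol has kernel exactly~$K_H^{\prime\prime\prime}$, so that modulo $K_H^{\prime\prime\prime}$ the equation becomes an optional compatibility consequence of the first three.

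The key compatibility comes from applying the Rumin operator $\nabla_H^{(2)}$ to the second equation $\nabla_H\mu=\rho-L(\sigma)$. Extending the chosen partial connection on $K_H$ to a full connection via Corollary~\ref{extend} and writing $\kappa_H^\prime$ for its curvature, this yields
$$\nabla_H^{(2)}\rho-\nabla_H^{(2)}(L(\sigma))=\kappa_H^\prime\mu.$$
Viewing $\nabla_H^{(2)}$ as a second order compatible operator on $\Lambda_H^1\otimes K_H$, Lemma~\ref{firstordertrick} produces a first order compatible operator $\mathcal{P}:\Lambda_H^1\otimes\Lambda_H^1\otimes K_H\to\Lambda_H^1\otimes L\otimes K_H$ and a homomorphism $\Theta$ with $\nabla_H^{(2)}=\mathcal{P}\circ\nabla_H+\Theta$. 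Substituting $\nabla_H\rho=\tau-M(\sigma,\mu)$, expanding all instances of the Leibniz rule applied to $L$, $M$, $\Theta$, and then repeatedly eliminating derivatives of $\sigma,\mu,\rho$ via the first three equations of~$D_H$ (exactly as in the passage after~(\ref{twoequations})) collapses the identity to
$$\mathcal{P}(\tau)=N_0(\sigma,\mu,\rho)$$
for an explicit vector bundle homomorphism $N_0:E\oplus K_H\oplus K_H^\prime\to\Lambda_H^1\otimes L\otimes K_H$.

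The crucial final step is to rewrite $\mathcal{P}(\tau)$ as a partial-connection equation for $\tau$. Since $\tau$ is a section of $K_H^{\prime\prime}\subseteq\Lambda_H^1\otimes K_H^\prime\subseteq\Lambda_H^1\otimes\Lambda_H^1\otimes K_H$, Lemma~\ref{firstordertrick} together with Proposition~\ref{calculateenhancedsymbol} identifies the restricted symbol of $\mathcal{P}|_{K_H^{\prime\prime}}$ with $\Sigma\otimes\mathrm{Id}:\bigotimes^3\Lambda_H^1\otimes K_H\to\Lambda_H^1\otimes L\otimes K_H$ restricted to $\Lambda_H^1\otimes K_H^{\prime\prime}$, whose kernel is
$$(\Lambda_H^1\otimes K_H^{\prime\prime})\cap(S_\perp^3\otimes K_H)=(\Lambda_H^1\otimes K_H^{\prime\prime})\cap(S_\perp^4\otimes E)=K_H^{\prime\prime\prime},$$
the two descriptions matching by the inductive identity $S_\perp^{\ell+1}=(\Lambda_H^1\otimes S_\perp^\ell)\cap(S_\perp^\ell\otimes\Lambda_H^1)$ together with $K_H\subseteq\Lambda_H^1\otimes E$. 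Choose any splitting $\delta_H$ of the resulting image inside $\Lambda_H^1\otimes L\otimes K_H$; exactly as in the derivation of~(\ref{newpartialconnection}), $\delta_H\circ\mathcal{P}$ then becomes a first order compatible operator on $K_H^{\prime\prime}$ with surjective partial symbol, so it can be written as $\nabla_H\tau$ modulo $K_H^{\prime\prime\prime}$ for a suitable partial connection $\nabla_H$ on $K_H^{\prime\prime}$. Setting $N:=-\delta_H\circ N_0$, the fourth equation $\nabla_H\tau+N(\sigma,\mu,\rho)\equiv0\bmod K_H^{\prime\prime\prime}$ holds automatically on solutions of the first three, yielding an operator $D_H^\prime$ with the required form and the same kernel as~$D_H$.

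The hardest part is organisational rather than conceptual: one must ensure that the substitutions that reduce the right-hand side to a genuine homomorphism $N_0(\sigma,\mu,\rho)$ really exhaust all derivatives (this is automatic because at most two derivatives of $\sigma$ and one of $\mu$ appear, and the available equations can eliminate exactly these), and one must verify the combinatorial identification of $K_H^{\prime\prime\prime}$ with both $(\Lambda_H^1\otimes K_H^{\prime\prime})\cap(S_\perp^3\otimes K_H)$ and $(\Lambda_H^1\otimes K_H^{\prime\prime})\cap(S_\perp^4\otimes E)$, which is the four-index analogue of the symbol computations performed explicitly in~\S\ref{three}.
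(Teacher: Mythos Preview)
Your proposal is correct and follows essentially the same route as the paper's own proof: apply the coupled Rumin operator $\nabla_H^{(2)}$ to the second equation, use Lemma~\ref{firstordertrick} to reduce $\nabla_H^{(2)}$ to a first order operator composed with a partial connection, substitute from the third equation and unwind via Leibniz, then identify the resulting symbol kernel as $K_H^{\prime\prime\prime}$ and choose a splitting to bring the fourth row into partial-connection form. The only organisational difference is that the paper applies Lemma~\ref{firstordertrick} with the \emph{given} partial connection on $K_H^\prime$ (rather than on all of $\Lambda_H^1\otimes K_H$) and then invokes Lemma~\ref{tautology} to further decompose~$\mathcal{P}$; this makes the substitution $\nabla_H\rho=\tau-M(\sigma,\mu)$ literally valid without first extending connections, whereas in your version you should either extend the given partial connection on $K_H^\prime$ to one on $\Lambda_H^1\otimes K_H$ before invoking Lemma~\ref{firstordertrick}, or absorb the discrepancy (a zeroth-order term) into~$N_0$.
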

\begin{proof}
Certainly, we may rewrite the last line of $D_H\Phi=0$ as
\begin{equation}\label{bottomline}
\nabla_H\rho+M(\sigma,\mu)-\tau=0,\enskip\mbox{for some }
\tau\in\Gamma(K_H^{\prime\prime})\end{equation}
and it suffices to derive a differential equation on $\tau$ of the given form. 
A suitable equation arises from the second line of $D_H\Phi=0$, namely
$$\nabla_H\mu=\rho-L(\sigma),$$
by an application of the coupled Rumin operator 
$\nabla_H^{(2)}:\Lambda_H^1\otimes
K_H\to\Lambda_H^1\otimes L\otimes K_H$ discussed in~\S\ref{three}. We conclude
that 
\begin{equation}\label{whatweconclude}
\nabla_H^{(2)}\rho=\kappa_H^\prime\mu+\nabla_H^{(2)}(L(\sigma)).\end{equation}
We would like to rewrite both $\nabla_H^{(2)}\rho$ and 
$\nabla_H^{(2)}(L(\sigma))$, noting that $\rho\in\Gamma(K_H^\prime)$ whereas 
$L(\sigma)\in\Gamma(\Lambda_H^1\otimes K_H)$. Dealing with $\rho$ first, we 
may use Lemma~\ref{firstordertrick} to write
$$\nabla_H^{(2)}\rho={\mathcal{P}}\nabla_H\rho+\Theta\rho$$
where $\nabla_H$ is the given partial connection on~$K_H^\prime$. {From}
(\ref{bottomline}) we may substitute for $\nabla_H\rho$ and conclude that 
$$\nabla_H^{(2)}\rho={\mathcal{P}}(\tau-M(\sigma,\mu))+\Theta\rho.$$
If we now use Lemma~\ref{tautology} to rewrite ${\mathcal{P}}:
\Lambda_H^1\otimes K_H^\prime\to\Lambda_H^1\otimes L\otimes K_H$, then 
\begin{equation}\label{ruminrho}
\nabla_H^{(2)}\rho=\pi\nabla_H\tau-\pi\nabla_H(M(\sigma,\mu))
+\theta(\tau-M(\sigma,\mu))+\Theta\rho\end{equation}
where $\nabla_H$ is a partial connection on $\Lambda_H^1\otimes K_H^\prime$ and
$\pi$ is the composition
\begin{equation}\label{thisispi}
\Lambda_H^1\otimes\Lambda_H^1\otimes K_H^\prime\hookrightarrow
\Lambda_H^1\otimes\Lambda_H^1\otimes\Lambda_H^1\otimes K_H
\xrightarrow{\,\Sigma\otimes{\mathrm{Id}}\,}\Lambda_H^1\otimes L\otimes K_H
\end{equation}
because $\Sigma\otimes{\mathrm{Id}}$ is the enhanced symbol 
of~$\nabla_H^{(2)}$, as noted in Proposition~\ref{calculateenhancedsymbol}. 
Recall that $M:E\oplus K_H\to\Lambda_H^1\otimes K_H^\prime$ is a smooth
homomorphism between bundles on which we already have defined various partial
connections. We may therefore use the partial Leibniz rule to expand
$$\nabla_H(M(\sigma,\mu))=
(\nabla_HM)(\sigma,\mu)+M(\nabla_H\sigma,\nabla_H\mu)$$
and substitute from the first and second lines of $D_H\Phi=0$ for
$\nabla_H\sigma$ and~$\nabla_H\mu$ to rewrite (\ref{ruminrho}) as
\begin{equation}\label{ruminrhorewritten}
\nabla_H^{(2)}\rho=\pi\nabla_H\tau+\theta\tau+S(\sigma,\mu,\rho),\end{equation}
where $\theta:K_H^{\prime\prime}\to\Lambda_H^1\otimes L\otimes K_H$ and
$S:E\oplus K_H\oplus K_H^\prime\to\Lambda_H^1\otimes L\otimes K_H$ are
smooth homomorphisms. Unravelling $\nabla_H^{(2)}(L(\sigma))$ is a similar
exercise except that in using Lemma~\ref{firstordertrick} we are obliged to
choose a partial connection on $\Lambda_H^1\otimes K_H$. Schematically, the
calculation reads
$$\begin{array}{rcl}\nabla_H^{(2)}(L(\sigma))
&=&{\mathcal{P}}(\nabla_H(L(\sigma)))+\Theta(L(\sigma))\\
&=&{\mathcal{P}}((\nabla_HL)(\sigma)+L(\mu))+\Theta(L(\sigma))\\
&=&\pi\nabla_H((\nabla_HL)(\sigma)+L(\mu))
+\theta((\nabla_HL)(\sigma)+L(\mu))+\Theta(L(\sigma))\\
&=&\pi(\nabla_H(\nabla_HL))(\sigma)+2\pi(\nabla_HL)(\mu)
+\pi L(\rho-L(\sigma))\\
&&\phantom{\pi\nabla_H((\nabla_HL)(\sigma)+L(\mu))}
+\theta((\nabla_HL)(\sigma)+L(\mu))+\Theta(L(\sigma))
\end{array}$$
and the result is that
\begin{equation}\label{theresult}
\nabla_H^{(2)}(L(\sigma))=T(\sigma,\mu,\rho)\end{equation}
for some smooth homomorphism 
$T:E\oplus K_H\oplus K_H^\prime\to\Lambda_H^1\otimes L\otimes K_H$.
Substituting (\ref{ruminrhorewritten}) and (\ref{theresult}) into
(\ref{whatweconclude}), we find that
\begin{equation}\label{whatwefind}
\pi\nabla_H\tau+\theta\tau+(S-T)(\sigma,\mu,\rho)
-\kappa_H^\prime\mu=0.\end{equation}
The left hand side is a section of $\Lambda_H^1\otimes L\otimes K_H$. Let us 
consider the differential operator
$$K_H^{\prime\prime}\ni\tau\longmapsto\pi\nabla_H\tau+\theta\tau\in
\Lambda_H^1\otimes L\otimes K_H.$$
According to~(\ref{thisispi}), the partial symbol of this operator is 
$(\Sigma\otimes{\mathrm{Id}})|_{\Lambda_H^1\otimes K_H^{\prime\prime}}$, where
$K_H^{\prime\prime}\subseteq\Lambda_H^1\otimes K_H^\prime
\subseteq\Lambda_H^1\otimes\Lambda_H^1\otimes K_H$ and,
according to~(\ref{globalhom}), its kernel is 
$$(\Lambda_H^1\otimes K_H^{\prime\prime})\cap(S_\perp^3\otimes K_H).$$
However, recalling that 
\begin{itemize}
\item 
$K_H^{\prime\prime}=(\Lambda_H^1\otimes K_H^\prime)\cap(S_\perp^3\otimes E)$;
\item 
$K_H\subseteq\Lambda_H^1\otimes E$;
\item
$(\Lambda_H^1\otimes S_\perp^3)\cap(S_\perp^3\otimes\Lambda_H^1)=S_\perp^4$,
\end{itemize}
we see that 
$$(\Lambda_H^1\otimes K_H^{\prime\prime})\cap(S_\perp^3\otimes K_H)=
(\Lambda_H^1\otimes K_H^{\prime\prime})\cap(S_\perp^4\otimes E),$$
which is precisely how $K_H^{\prime\prime\prime}$ is defined in the 
statement of the proposition. In summary, the kernel of the symbol of 
$\tau\mapsto\pi\nabla_H\tau+\theta\tau$ is~$K_H^{\prime\prime\prime}$. We 
shall use $\Sigma\otimes{\mathrm{Id}}$ to identify 
$$\Lambda_H^1\otimes L\otimes K_H\supseteq{\mathrm{range}}
((\Sigma\otimes{\mathrm{Id}})|_{\Lambda_H^1\otimes K_H^{\prime\prime}})=
(\Lambda_H^1\otimes K_H^{\prime\prime})/K_H^{\prime\prime\prime},$$
choose a smooth splitting
$$\delta:\Lambda_H^1\otimes L\otimes K_H\to
(\Lambda_H^1\otimes K_H^{\prime\prime})/K_H^{\prime\prime\prime},$$
and consider 
\begin{equation}\label{whatweconsider}
\delta(\pi\nabla_H\tau+\theta\tau)+\delta((S-T)(\sigma,\mu,\rho)
-\kappa_H^\prime\mu)\end{equation}
whose vanishing is a consequence of~(\ref{whatwefind}). We have just arranged 
that the symbol of the operator 
$\tau\mapsto\delta(\pi\nabla_H\tau+\theta\tau)$ is the canonical projection
$$\Lambda_H^1\otimes K_H^{\prime\prime}\to
(\Lambda_H^1\otimes K_H^{\prime\prime})/K_H^{\prime\prime\prime}$$
and the remaining part of (\ref{whatweconsider}) is some homomorphism
$K_H^{\prime\prime}\to
(\Lambda_H^1\otimes K_H^{\prime\prime})/K_H^{\prime\prime\prime}$.
It remains to choose a partial connection on $K_H^{\prime\prime}$ and a lift 
of this homomorphism to $\Lambda_H^1\otimes K_H^{\prime\prime}$ to write
(\ref{whatweconsider}) in the form in the form
$$\nabla_H\tau+N(\sigma,\mu,\rho)\enskip\bmod K_H^{\prime\prime\prime},$$
as required.\end{proof}
It remains to see why Proposition~\ref{iterum} can be iterated. The point is 
that we may regroup the output from this Proposition as follows.
$$\begin{array}c
\makebox[0pt][r]{$\widetilde{E}=\left\{\rule{0pt}{20pt}\right.\!\!\!$}
\begin{array}cE\\ \oplus\\ K_H\end{array}\\ 
\oplus\\ 
\makebox[0pt][r]{$\widetilde{K}_H={}$}K_H^\prime\\ \oplus\\
\makebox[0pt][r]{$\widetilde{K}_H^\prime={}$}K_H^{\prime\prime}\end{array}
\ni\left[\begin{array}c\sigma\\ \mu\\ \rho\\ \tau\end{array}\right]
\stackrel{D_H^\prime}{\longmapsto}
\left[\begin{array}c\nabla_H\sigma-\mu\\ \nabla_H\mu+L(\sigma)-\rho\\ 
\vdots\\[-7pt] \vdots
\end{array}\right]\in\!\!\!
\begin{array}c\begin{array}c\Lambda_H^1\otimes E\\ \oplus\\ 
\Lambda_H^1\otimes K_H\end{array}
\makebox[0pt][l]{$\!\!\!\left.\rule{0pt}{20pt}\right\}
=\Lambda_H^1\otimes\widetilde{E}$}\\ \oplus\\
\Lambda_H^1\otimes K_H^\prime\\ \oplus\\
(\Lambda_H^1\otimes K_H^{\prime\prime})/K_H^{\prime\prime\prime}
\end{array}$$
In order to substitute back into Proposition~\ref{iterum}, the crux is to note
that
\begin{itemize}
\item $\left[\begin{array}c\sigma\\ \mu\end{array}\right]\mapsto
\left[\begin{array}c\nabla_H\sigma-\mu\\ \nabla_H\mu+L(\sigma)
\end{array}\right]$ is a partial connection on~$\widetilde{E}$;
\item \rule{0pt}{15pt}$\widetilde{K}_H^{\prime\prime}\equiv
(\Lambda_H^1\otimes\widetilde{K}_H^\prime)\cap(S_\perp^3\otimes\widetilde{E})=
(\Lambda_H^1\otimes K_H^{\prime\prime})\cap(S_\perp^4\otimes E)
\equiv K_H^{\prime\prime\prime}$.
\end{itemize}
In order to interpret the new output, the crux is to note that
$$\widetilde{K}_H^{\prime\prime\prime}\equiv
(\Lambda_H^1\otimes\widetilde{K}_H^{\prime\prime})\cap
(S_\perp^4\otimes\widetilde{E})=
(\Lambda_H^1\otimes K_H^{\prime\prime\prime})\cap(S_\perp^5\otimes E)
\equiv K_H^{\prime\prime\prime\prime}.$$
For completeness, here is the final conclusion in all dimensions and for
operators of arbitrary order. 
\begin{theorem}\label{finalcontacttheorem}
Suppose $D:E\to F$ is a $k^{\mbox{\scriptsize th}}$ order linear differential
operator between smooth vector bundles on a contact manifold. Suppose that it
is compatible with the contact structure and that its enhanced symbol
$S_\perp^k\otimes E\to F$ is surjective with kernel~$K_H$. Suppose that
$$K_H^\ell\equiv(S_\perp^\ell\otimes K_H)\cap(S_\perp^{k+\ell}\otimes E)$$
are vector bundles for all~$\ell$ (we say that $D$ is `regular') and that
$K_H^\ell=0$ for $\ell$ sufficiently large (we say that $D$ is `finite-type').
Then there is a computable partial connection $\nabla_H$ on the
bundle
$$\textstyle{\mathbb{T}}\equiv J_H^{k-1}E\oplus K_H\oplus
\bigoplus_{\ell\geq 1}\!K_H^\ell$$
such that taking the first component ${\mathbb{T}}\to J_H^{k-1}E\to E$ 
induces an isomorphism
$$\{\Sigma\in\Gamma({\mathbb{T}})\mbox{\rm\ s.t.\ }\nabla_H\Sigma=0\}\cong
\{\sigma\in\Gamma(E)\mbox{\rm\ s.t.\ }D\sigma=0\}.$$
In particular, the solution space of $D$ is finite-dimensional with dimension
bounded by the rank of\/~${\mathbb{T}}$.
\end{theorem}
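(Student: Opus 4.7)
The plan is to bootstrap from Theorem \ref{highercontacttheorem} and then iterate the first-order contact prolongation machinery already developed in Sections \ref{contact} and \ref{three}. I would first apply Theorem \ref{highercontacttheorem} to replace $D$ by the first-order contact-compatible operator $\widetilde{D}_H:E_H^\prime \to W_H^kE$, which already preserves the solution space under the canonical projection $E_H^\prime \to J_H^{k-1}E\to E$. The restricted symbol of $\widetilde{D}_H$ from (\ref{restrictedsymbol}) has kernel $K_H^\prime$ as defined in (\ref{KHprime}); since we assume this has constant rank, I can choose a subbundle complement to its image $F_H^\prime\subseteq W_H^kE$ and project to obtain a first-order contact-compatible operator $D_H^\prime:E_H^\prime\to F_H^\prime$ with surjective restricted symbol and with kernel-of-symbol equal to $K_H^\prime$.

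Now the iteration splits by dimension. In dimension $2n+1\geq 5$, I would iterate the construction of Theorem \ref{secondcounterpart}: at the $\ell$-th stage the source bundle has accumulated $E_H^\prime \oplus K_H^\prime \oplus\cdots\oplus K_H^{\ell-1}$ and the new symbol kernel is $K_H^\ell$, which follows by induction from the computation around (\ref{KHprimeprime}) combined with the inductive identity $(\Lambda_H^1\otimes S_\perp^{\ell-1})\cap(S_\perp^{\ell-1}\otimes\Lambda_H^1)=S_\perp^\ell$ from (\ref{Sperpell}). In dimension $3$, I would instead iterate Proposition \ref{iterum}; the starting configuration is $D_H^\prime$ in the form (\ref{DHprime}), and the regrouping trick recorded immediately after Proposition \ref{iterum} (treating $E\oplus K_H$ as a new $\widetilde E$, $K_H^\prime$ as $\widetilde K_H$, $K_H^{\prime\prime}$ as $\widetilde K_H^\prime$) verifies that at every step the next symbol kernel is again $(S_\perp^\ell\otimes K_H)\cap(S_\perp^{k+\ell}\otimes E) = K_H^\ell$.

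When the finite-type hypothesis $K_H^\ell=0$ for $\ell\geq\ell_0$ eventually applies, the last iterated operator in either scheme has trivial symbol kernel on its newest direct summand; since its structure is of the form $\nabla_H(\,\cdot\,) - (\text{new variable}) + (\text{lower order})$ with the new-variable slot now trivial, the operator reduces to an honest partial connection on the accumulated bundle $\mathbb{T} = J_H^{k-1}E \oplus K_H \oplus \bigoplus_{\ell\geq 1}K_H^\ell$. Each step of the iteration preserved the one-to-one correspondence of solution spaces via projection to the previous source bundle, so composing projections yields the stated isomorphism with $\{\sigma\in\Gamma(E)\text{ s.t.\ }D\sigma=0\}$, and the dimension bound is immediate.

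The main obstacle is the bookkeeping needed to verify that the dimension-$3$ iteration via Proposition \ref{iterum} produces exactly the same sequence of bundles $K_H^\ell$ as the dimension-$\geq 5$ iteration, since the Rumin-complex-driven construction in dimension $3$ superficially looks unrelated. The key identifications are the recursive description $S_\perp^\ell=(\Lambda_H^1\otimes S_\perp^{\ell-1})\cap(S_\perp^{\ell-1}\otimes\Lambda_H^1)$ from (\ref{Sperpell}) together with the explicit symbol computation of Proposition \ref{calculateenhancedsymbol}; without these, the $K_H^\ell$ produced by the three-dimensional scheme would look ad hoc rather than collapsing to the single uniform formula in the statement of the theorem. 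A further mild obstacle is tracking the regularity hypothesis through the iteration: one must verify that each successive symbol map has constant rank, which follows inductively from the assumption that all $K_H^\ell$ are vector bundles.
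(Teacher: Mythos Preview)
Your proposal is correct and follows essentially the same route as the paper: bootstrap via Theorem~\ref{highercontacttheorem} to a first-order contact-compatible operator, then in dimension $\geq 5$ iterate the Theorem~\ref{secondcounterpart} construction while in dimension $3$ iterate Proposition~\ref{iterum} starting from the form~(\ref{DHprime}), using the regrouping trick recorded immediately afterwards to keep the symbol kernels aligned with the uniform formula~(\ref{KHl}). The obstacles you flag---matching the $3$-dimensional symbol kernels to $K_H^\ell$ via~(\ref{Sperpell}) and Proposition~\ref{calculateenhancedsymbol}, and propagating the regularity hypothesis---are exactly the points the paper's discussion addresses.
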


\section{Geometric operators}\label{geometric}
Although their definition is simple enough and determined purely in terms
of the given subbundle $K_H\subset S_\perp^k\otimes E$, the spaces
\begin{equation}\label{KHell}
K_H^\ell\equiv(S_\perp^\ell\otimes K_H)\cap(S_\perp^{k+\ell}\otimes E)
\end{equation}
can be hard to understand. For a wide class of geometrically natural examples,
however, these bundles can be sensibly computed. The corresponding operators
are seen to be finitely determined and we obtain sharp bounds on the dimension
of their solution spaces. The key ingredient is Kostant's
computation of certain Lie algebra cohomologies~\cite{kostant} and our approach
follows \cite{bceg} where similar reasoning was used in the case of classical
prolongation.

To proceed, let us recall from \S\ref{contact} that we are writing the
dimension of our contact manifold as $2n+1$ and let us realise the Lie algebras
${\mathfrak{sp}}(2n,{\mathbb{R}})$ and ${\mathfrak{sp}}(2(n+1),{\mathbb{R}})$
as matrices of the form
$$\left[\begin{array}{cc}\mbox{\Large$A$}&\mbox{\Large$B$}\\
\mbox{\Large$C$}&\raisebox{2pt}{$-$}\mbox{\Large$A^t$}\end{array}\right]
\quad\mbox{and}\quad
\left[\begin{array}{cccc}\lambda&-q^t&p^t&\alpha\\[4pt]
\raisebox{3pt}{$r$}&\mbox{\Large$A$}&\mbox{\Large$B$}&\raisebox{3pt}{$p$}\\
\raisebox{3pt}{$s$}&\mbox{\Large$C$}&
\raisebox{2pt}{$-$}\mbox{\Large$A^t$}&\raisebox{3pt}{$q$}\\[2pt]
\beta&s^t&-r^t&-\lambda
\end{array}\right]$$
respectively, where $B$ and $C$ are symmetric $n\times n$ real matrices, $A$ is
an arbitrary $n\times n$ real matrix, $\lambda,\alpha,\beta$ are real numbers,
and $p,q,r,s$ are real $n$-vectors. As the notation suggests, we have
${\mathfrak{sp}}(2n,{\mathbb{R}})\hookrightarrow
{\mathfrak{sp}}(2(n+1),{\mathbb{R}})$ an embedding of Lie algebras. The adjoint
action of the `grading element'
\begin{equation}\label{ge}\left[
\begin{array}{cccc}1&0&0&0\\ 0&0&0&0\\ 0&0&0&0\\ 0&0&0&-1\end{array}
\right]\end{equation}
splits ${\mathfrak{g}}\equiv{\mathfrak{sp}}(2(n+2),{\mathbb{R}})$ into
eigenspaces
\begin{equation}\label{Cdeomposition}
{\mathfrak{g}}={\mathfrak{g}}_{-2}\oplus{\mathfrak{g}}_{-1}\oplus
{\mathfrak{g}}_{0}\oplus{\mathfrak{g}}_{1}\oplus{\mathfrak{g}}_{2}
\end{equation}
containing elements with the form
$$\mbox{\footnotesize$\left[
\begin{array}{cccc}0&0&0&0\\ 0&0&0&0\\ 0&0&0&0\\ *&0&0&0\end{array}
\right]\quad
\left[
\begin{array}{cccc}0&0&0&0\\ *&0&0&0\\ *&0&0&0\\ 0&*&*&0\end{array}
\right]\quad
\left[
\begin{array}{cccc}*&0&0&0\\ 0&*&*&0\\ 0&*&*&0\\ 0&0&0&*\end{array}
\right]\quad
\left[
\begin{array}{cccc}0&*&*&0\\ 0&0&0&*\\ 0&0&0&*\\ 0&0&0&0\end{array}
\right]\quad
\left[
\begin{array}{cccc}0&0&0&*\\ 0&0&0&0\\ 0&0&0&0\\ 0&0&0&0\end{array}
\right],$}$$
respectively. Let us denote by ${\mathfrak{g}}_-$ the Lie subalgebra
${\mathfrak{g}}_{-2}\oplus{\mathfrak{g}}_{-1}$ of~${\mathfrak{g}}$. As a Lie
algebra is its own right, it is usually referred to as the {\em Heisenberg
algebra}. Suppose ${\mathbb{V}}$ is a finite-dimensional representation
of~${\mathfrak{g}}_-$. Then we may define linear transformations
\begin{equation}\label{Liecomplex}0\to
{\mathbb{V}}\xrightarrow{\,\partial\,}\Hom({\mathfrak{g}}_-,{\mathbb{V}})
\xrightarrow{\,\partial\,}\Hom(\Lambda^2{\mathfrak{g}}_-,{\mathbb{V}})
\end{equation}
by $(\partial v)(X)\equiv Xv$ and
$(\partial\phi)(X\wedge Y)\equiv\phi([X,Y])-X\phi(Y)+Y\phi(X)$,
respectively. It is easily verified that $\partial^2=0$ and we define
the {\em Lie algebra cohomologies}
$$H^0({\mathfrak{g}}_-,{\mathbb{V}})\equiv
\ker\partial:{\mathbb{V}}\to\Hom({\mathfrak{g}}_-,{\mathbb{V}})$$
and
$$H^1({\mathfrak{g}}_-,{\mathbb{V}})\equiv
\frac{\ker\partial:\Hom({\mathfrak{g}}_-,{\mathbb{V}})\to
\Hom({\Lambda^2\mathfrak{g}}_-,{\mathbb{V}})}
{\im\partial:{\mathbb{V}}\to\Hom({\mathfrak{g}}_-,{\mathbb{V}})}.$$
\begin{lemma}\label{en_is_one}
Suppose $n=1$. Then $H^0({\mathfrak{g}}_-,{\mathbb{V}})$ and
$H^1({\mathfrak{g}}_-,{\mathbb{V}})$ may be computed using the complex
\begin{equation}\label{replaceLiecomplexthree}0\to
{\mathbb{V}}\xrightarrow{\,\partial_H\,}\Hom({\mathfrak{g}}_{-1},{\mathbb{V}})
\xrightarrow{\,\partial_H\,}
\Hom({\mathfrak{g}}_{-1}\otimes{\mathfrak{g}}_{-2},{\mathbb{V}})
\end{equation}
instead of {\rm(\ref{Liecomplex})}, where $(\partial_Hv)(X)\equiv Xv$ and
$$(\partial_H\phi)(X\otimes[Y,Z])\equiv
[Y,Z]\phi(X)-X\big(Y\phi(Z)-Z\phi(Y)\big)$$
for $X,Y,Z\in{\mathfrak{g}}_{-1}$.
\end{lemma}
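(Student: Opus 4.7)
The plan is to exploit the grading $\mathfrak{g}_- = \mathfrak{g}_{-1}\oplus\mathfrak{g}_{-2}$ together with the dimensional accident that when $n=1$ the subspace $\mathfrak{g}_{-2}$ is one-dimensional, so $\Lambda^2\mathfrak{g}_{-2}=0$ and the bracket $\Lambda^2\mathfrak{g}_{-1}\to\mathfrak{g}_{-2}$ is an isomorphism. I would build an explicit cochain-level comparison between the Chevalley--Eilenberg complex~(\ref{Liecomplex}) and the proposed complex~(\ref{replaceLiecomplexthree}) via the obvious restriction-to-$\mathfrak{g}_{-1}$ map, and verify that it induces isomorphisms on $H^0$ and $H^1$.

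For $H^0$ the work is essentially trivial: $\ker\partial\subseteq\ker\partial_H$ is clear, and conversely if $Xv=0$ for every $X\in\mathfrak{g}_{-1}$ then $[X,Y]v=XYv-YXv=0$ for all $X,Y\in\mathfrak{g}_{-1}$, which kills all of $\mathfrak{g}_{-2}$ because $\mathfrak{g}_{-2}=[\mathfrak{g}_{-1},\mathfrak{g}_{-1}]$.

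For $H^1$ I would proceed in three stages. First, write a $1$-cochain for the original complex as $\phi=(\phi_1,\phi_2)$ with respect to the splitting $\Hom(\mathfrak{g}_-,\mathbb{V})=\Hom(\mathfrak{g}_{-1},\mathbb{V})\oplus\Hom(\mathfrak{g}_{-2},\mathbb{V})$. The cocycle condition on pairs in $\mathfrak{g}_{-1}\times\mathfrak{g}_{-1}$ reads $\phi_2([Y,Z])=Y\phi_1(Z)-Z\phi_1(Y)$, and thanks to the isomorphism $\Lambda^2\mathfrak{g}_{-1}\cong\mathfrak{g}_{-2}$ this uniquely determines $\phi_2$ from~$\phi_1$ (and shows that any $\phi_1$ has a unique such extension). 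Second, the cocycle condition on $\mathfrak{g}_{-2}\times\mathfrak{g}_{-2}$ is vacuous, while the remaining condition on pairs $X\in\mathfrak{g}_{-1}$, $W=[Y,Z]\in\mathfrak{g}_{-2}$ (for which $[X,W]=0$) becomes $X\phi_2(W)=W\phi_1(X)$, and substituting the formula for $\phi_2$ turns this into exactly $(\partial_H\phi_1)(X\otimes[Y,Z])=0$ as stated in the lemma. Third, the restriction of an inner cocycle $\partial v$ is precisely $\partial_Hv$, so coboundaries match; combined with the previous two points this gives a bijection of cohomology.

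The only nonformal step---and the one where I would check signs carefully---is the second stage: one must confirm that the cross cocycle condition aligns with the prescription of $\partial_H$ given in the lemma, including that there are no leftover terms from $[\mathfrak{g}_{-1},\mathfrak{g}_{-2}]=0$ and $\Lambda^2\mathfrak{g}_{-2}=0$. Everything else is formal manipulation built around these two special features of the $n=1$ Heisenberg algebra, and delivers the replacement complex essentially for free; the result is the algebraic shadow of the Rumin-type replacement of $\Lambda_H^2$ by a target in $\Lambda_H^1\otimes L$ that pervades~\S\ref{three}.
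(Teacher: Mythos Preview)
Your argument is correct and is essentially the same as the paper's, just unpacked in more detail: the paper observes that the composition $\Hom(\mathfrak{g}_{-2},\mathbb{V})\hookrightarrow\Hom(\mathfrak{g}_-,\mathbb{V})\xrightarrow{\partial}\Hom(\Lambda^2\mathfrak{g}_-,\mathbb{V})\to\Hom(\Lambda^2\mathfrak{g}_{-1},\mathbb{V})$ is the isomorphism induced by the bracket $\Lambda^2\mathfrak{g}_{-1}\cong\mathfrak{g}_{-2}$, and simply cancels these two summands from the complex to leave~(\ref{replaceLiecomplexthree}). Your three-stage analysis of $H^1$ is exactly this cancellation carried out explicitly at the level of cocycles and coboundaries.
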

\begin{proof}
The composition
\begin{equation}\label{zigzag}
\Hom({\mathfrak{g}}_{-2},{\mathbb{V}})\to\Hom({\mathfrak{g}}_-,{\mathbb{V}})
\xrightarrow{\,\partial\,}\Hom(\Lambda^2{\mathfrak{g}}_-,{\mathbb{V}})
\to\Hom(\Lambda^2{\mathfrak{g}}_{-1},{\mathbb{V}})\end{equation}
is given by $\phi\longmapsto\big(X\wedge Y\mapsto\phi([X,Y])\big)$. In other
words, we may use the isomorphism
\begin{equation}\label{bracket}
\Lambda^2{\mathfrak{g}}_{-1}\ni X\wedge Y\longmapsto
[X,Y]\in{\mathfrak{g}}_{-2}\end{equation}
to eliminate the composition (\ref{zigzag}) from
$$\begin{array}c\Hom({\mathfrak{g}}_{-2},{\mathbb{V}})\\
\oplus\\
\Hom({\mathfrak{g}}_{-1},{\mathbb{V}})\end{array}
\!=\Hom({\mathfrak{g}}_-,{\mathbb{V}})\xrightarrow{\enskip\partial\enskip}
\Hom(\Lambda^2{\mathfrak{g}}_-,{\mathbb{V}})=\!
\begin{array}c
\Hom({\mathfrak{g}}_{-1}\otimes{\mathfrak{g}}_{-2},{\mathbb{V}})\\ \oplus\\
\Hom(\Lambda^2{\mathfrak{g}}_{-1},{\mathbb{V}})\end{array}$$
to leave the complex (\ref{replaceLiecomplexthree}), as required.
\end{proof}
\begin{lemma} Suppose $n\geq 2$.  Then $H^0({\mathfrak{g}}_-,{\mathbb{V}})$ and
$H^1({\mathfrak{g}}_-,{\mathbb{V}})$ may be computed using the complex
\begin{equation}\label{replaceLiecomplex}0\to
{\mathbb{V}}\xrightarrow{\,\partial_H\,}\Hom({\mathfrak{g}}_{-1},{\mathbb{V}})
\xrightarrow{\,\partial_H\,}
\Hom(\Lambda_\perp^2{\mathfrak{g}}_{-1},{\mathbb{V}})
\end{equation}
induced from {\rm(\ref{Liecomplex})}, where
$\Lambda_\perp^2{\mathfrak{g}}_{-1}$ is the kernel of the Lie bracket
homomorphism~{\rm(\ref{bracket})}.
\end{lemma}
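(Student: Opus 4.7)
The plan is to construct a chain map $\pi^\bullet\colon C^\bullet(\mathfrak{g}_-,\mathbb{V})\to C_{\mathrm{red}}^\bullet$ from the standard Chevalley-Eilenberg complex~(\ref{Liecomplex}) to the reduced complex~(\ref{replaceLiecomplex}) and then verify that $\pi_*$ is an isomorphism on $H^0$ and~$H^1$. Set $\pi^0=\mathrm{id}_{\mathbb{V}}$; let $\pi^1\colon\Hom(\mathfrak{g}_{-1},\mathbb{V})\oplus\Hom(\mathfrak{g}_{-2},\mathbb{V})\to\Hom(\mathfrak{g}_{-1},\mathbb{V})$ be the projection onto the first summand; and let $\pi^2\colon\Hom(\Lambda^2\mathfrak{g}_-,\mathbb{V})\to\Hom(\Lambda_\perp^2\mathfrak{g}_{-1},\mathbb{V})$ be the projection to the $\Lambda^2\mathfrak{g}_{-1}$-summand followed by restriction to $\Lambda_\perp^2\mathfrak{g}_{-1}$. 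That $\pi^\bullet$ is a chain map is immediate: on $\Lambda_\perp^2\mathfrak{g}_{-1}$ the bracket term $\phi_{-2}([X,Y])$ in the Chevalley-Eilenberg differential vanishes, and what is left is precisely $\partial_H\phi_{-1}$.

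For $H^0$, both groups compute the $\mathfrak{g}_-$-invariants $\mathbb{V}^{\mathfrak{g}_-}$, since $\mathfrak{g}_{-1}$ generates $\mathfrak{g}_-$ as a Lie algebra. For injectivity of $\pi_*$ on $H^1$, suppose $(\phi_{-1},\phi_{-2})$ is a Chevalley-Eilenberg $1$-cocycle with $\phi_{-1}=\partial_H v$; reading the $\Lambda^2\mathfrak{g}_{-1}$-component of the cocycle equation yields $\phi_{-2}([X_1,X_2])=X_1\phi_{-1}(X_2)-X_2\phi_{-1}(X_1)=[X_1,X_2]v$, whence $\phi_{-2}(Y)=Yv$ on all of $\mathfrak{g}_{-2}$ and so $(\phi_{-1},\phi_{-2})=\partial v$.

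For surjectivity of $\pi_*$ on $H^1$, given a $\Lambda_\perp^2$-cocycle $\phi_{-1}$ we attempt to lift it by the forced prescription $\phi_{-2}([X_1,X_2]):=X_1\phi_{-1}(X_2)-X_2\phi_{-1}(X_1)$, which is well-defined precisely by the $\Lambda_\perp^2$-cocycle condition. The $\Lambda^2\mathfrak{g}_{-1}$-component of the full cocycle identity for $(\phi_{-1},\phi_{-2})$ then holds by construction. The remaining condition $X\phi_{-2}(Y)=Y\phi_{-1}(X)$ for $X\in\mathfrak{g}_{-1}$ and $Y\in\mathfrak{g}_{-2}$ is the crux of the argument and where the hypothesis $n\geq 2$ is essential.

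The key observation is that for $n\geq 2$ one may compute $\phi_{-2}(Z)$ (for $Z$ a generator of the one-dimensional $\mathfrak{g}_{-2}$) using a pair $X_1,X_2$ lying in the symplectic orthogonal $X^\perp\subseteq\mathfrak{g}_{-1}$ of $X$: the Levi form descends to a non-degenerate symplectic form on $X^\perp/\langle X\rangle$ of dimension $2n-2\geq 2$, which contains a symplectic $2$-plane lifting to $X_1,X_2\in X^\perp$ with $[X_1,X_2]=Z$. Since $X_i\in X^\perp$ forces $[X,X_i]=0$, the operators $X$ and $X_i$ commute on $\mathbb{V}$, while the cocycle condition on $X\wedge X_i\in\Lambda_\perp^2\mathfrak{g}_{-1}$ gives $X\phi_{-1}(X_i)=X_i\phi_{-1}(X)$. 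A short direct calculation then yields $X\phi_{-2}(Z)=[X_1,X_2]\phi_{-1}(X)=Z\phi_{-1}(X)$, as required. This manoeuvre is unavailable when $n=1$, for lack of room in $X^\perp$, which is exactly why Lemma~\ref{en_is_one} required a separate treatment.
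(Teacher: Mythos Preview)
Your proof is correct and follows essentially the same path as the paper's: both reduce to showing that the $\mathfrak{g}_{-1}\wedge\mathfrak{g}_{-2}$ component of the cocycle condition is forced by the $\Lambda^2\mathfrak{g}_{-1}$ component when $n\geq 2$, after which one cancels $\Hom(\mathfrak{g}_{-2},\mathbb{V})$ against its image in $\Hom(\Lambda^2\mathfrak{g}_{-1},\mathbb{V})$. The paper packages the key step via surjectivity of the cyclic-sum map $\Lambda^3\mathfrak{g}_{-1}\to\mathfrak{g}_{-2}\otimes\mathfrak{g}_{-1}$ and a Jacobi-type identity, whereas your choice of $X_1,X_2\in X^\perp$ with $[X_1,X_2]=Z$ is simply an explicit preimage of $Z\otimes X$ under that same map (namely $X\wedge X_1\wedge X_2$, since $[X,X_i]=0$), so the two arguments are the same computation in slightly different dress.
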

\begin{proof} A crucial difference for $n\geq 2$ is that the complex
(\ref{Liecomplex}) may be replaced by
\begin{equation}\label{replacement}0\to
{\mathbb{V}}\xrightarrow{\,\partial\,}\Hom({\mathfrak{g}}_-,{\mathbb{V}})
\xrightarrow{\,\partial\,}\Hom(\Lambda^2{\mathfrak{g}}_{-1},{\mathbb{V}})
\end{equation}
without changing the cohomology. In other words, the kernel of the homomorphism
$\partial:\Hom({\mathfrak{g}}_-,{\mathbb{V}})\to
\Hom(\Lambda^2{\mathfrak{g}}_-,{\mathbb{V}})$ is the same as the kernel of
the composition
$$\Hom({\mathfrak{g}}_-,{\mathbb{V}})
\xrightarrow{\,\partial\,}\Hom(\Lambda^2{\mathfrak{g}}_-,{\mathbb{V}})\to
\Hom(\Lambda^2{\mathfrak{g}}_{-1},{\mathbb{V}}).$$
To see this, suppose that $\phi\in\Hom({\mathfrak{g}}_-,{\mathbb{V}})$ and
$(\partial\phi)(\omega)=0$ for all $\omega\in\Lambda^2{\mathfrak{g}}_{-1}$.
Then, according to the decomposition
$$\Lambda^2{\mathfrak{g}}_-=({\mathfrak{g}}_{-2}\wedge{\mathfrak{g}}_{-1})
\oplus\Lambda^2{\mathfrak{g}}_{-1},$$
we must show that $(\partial\phi)(\omega)=0$ for all
$\omega\in{\mathfrak{g}}_{-2}\wedge{\mathfrak{g}}_{-1}$.
As $n\geq 2$, the homomorphism
$$\Lambda^3{\mathfrak{g}}_{-1}\ni X\wedge Y\wedge Z\mapsto
[X,Y]\otimes Z+[Y,Z]\otimes X+[Z,X]\otimes Y\in
{\mathfrak{g}}_{-2}\otimes{\mathfrak{g}}_{-1}$$
is surjective and we find
$$\begin{array}l
(\partial\phi)([X,Y]\wedge Z+[Y,Z]\wedge X+[Z,X]\wedge Y)=\\[4pt]
Z\phi([X,Y])-[X,Y]\phi(Z)+X\phi([Y,Z])-[Y,Z]\phi(X)+Y\phi([Z,X])
-[Z,X]\phi(Y)=\\[4pt]
X(\partial\phi)(Y\wedge Z)+Y(\partial\phi)(Z\wedge X)
+Z(\partial\phi)(X\wedge Y)=0,\end{array}$$
as required. Having replaced (\ref{Liecomplex}) by (\ref{replacement}), we may
now argue as in the proof of Lemma~\ref{en_is_one}.
Specifically, we may cancel
$\Hom({\mathfrak{g}}_{-2},{\mathbb{V}})$ inside
$\Hom({\mathfrak{g}}_-,{\mathbb{V}})$ with its image in
$\Hom(\Lambda^2{\mathfrak{g}}_{-1},{\mathbb{V}})$, leaving the
complex~(\ref{replaceLiecomplex}). This completes our proof.
\end{proof}
\begin{remark}The Killing form on
${\mathfrak{g}}={\mathfrak{sp}}(2(n+1),{\mathbb{R}})$ canonically identifies
the duals of ${\mathfrak{g}}_{-1}$ and ${\mathfrak{g}}_{-2}$ with
${\mathfrak{g}}_1$ and ${\mathfrak{g}}_2$, respectively.
It is sometimes convenient to rewrite the complexes
(\ref{replaceLiecomplexthree}) and (\ref{replaceLiecomplex}) as
\begin{equation}\label{bestLiecomplexthree}0\to
{\mathbb{V}}\xrightarrow{\,\partial_H\,}{\mathfrak{g}}_1\otimes{\mathbb{V}}
\xrightarrow{\,\partial_H\,}
{\mathfrak{g}}_1\otimes{\mathfrak{g}}_2\otimes{\mathbb{V}}\end{equation}
and
\begin{equation}\label{bestLiecomplex}0\to
{\mathbb{V}}\xrightarrow{\,\partial_H\,}{\mathfrak{g}}_1\otimes{\mathbb{V}}
\xrightarrow{\,\partial_H\,}
\Lambda_\perp^2{\mathfrak{g}}_1\otimes{\mathbb{V}}\end{equation}
respectively, where $\Lambda_\perp^2{\mathfrak{g}}_1$ is the kernel of the Lie
bracket $\Lambda^2{\mathfrak{g}}_1\to{\mathfrak{g}}_2$.
\end{remark}
\begin{remark} The reader will have noticed that the distinction between the
cases $n=1$ and $n\geq 2$ resembles the distinction found earlier in contact
geometry, namely the algebraic complex (\ref{bestLiecomplex}) closely resembles
the complex (\ref{rumin}) of differential operators whilst
(\ref{bestLiecomplexthree}) follows~(\ref{threeDrumin}). This observation
continues into higher cohomology: the Lie algebra cohomology
$H^q({\mathfrak{g}}_-,{\mathbb{V}})$ for any representation ${\mathbb{V}}$ of
the Heisenberg Lie algebra ${\mathfrak{g}}_-$ is defined by an algebraic
complex resembling the de~Rham complex but may be computed by an alternative
algebraic complex following the Rumin complex \cite{rumin} in contact geometry.
A precise explanation for this observation may be obtained from the usual
interpretation \cite{knapp} of $H^q({\mathfrak{g}}_-,{\mathbb{V}})$ as
equivariant de~Rham cohomology on the Heisenberg group.\end{remark}

When the representation ${\mathbb{V}}$, of ${\mathfrak{g}}_-$, is obtained from
an irreducible finite-dimensional representation of ${\mathfrak{g}}$ by
restriction to~${\mathfrak{g}}_-$, the Lie algebra cohomology
$H^q({\mathfrak{g}}_-,{\mathbb{V}})$ is\linebreak 
computed by a theorem of Kostant~\cite{kostant}. If we characterise such
representations of ${\mathfrak{g}}$ by\linebreak 
means of their highest weight written as an integral combination of the
fundamental weights in the usual way, then for $k\geq 1$ and non-negative
integers $a,b,c,\cdots d,e$,
\begin{equation}\label{Hzero}
H^0({\mathfrak{g}}_-,\begin{picture}(85,20)(-20,5)
\put(-13,8){\makebox(0,0){$\bullet$}}
\put(-13,13){\makebox(0,0)[b]{$\scriptstyle k-1$}}
\put(2,13){\makebox(0,0)[b]{$\scriptstyle a$}}
\put(12,13){\makebox(0,0)[b]{$\scriptstyle b$}}
\put(22,13){\makebox(0,0)[b]{$\scriptstyle c$}}
\put(50,13){\makebox(0,0)[b]{$\scriptstyle d$}}
\put(60,13){\makebox(0,0)[b]{$\scriptstyle e$}}
\put(2,8){\makebox(0,0){$\bullet$}}
\put(12,8){\makebox(0,0){$\bullet$}}
\put(22,8){\makebox(0,0){$\bullet$}}
\put(-13,8){\line(1,0){40}}
\put(50,8){\line(-1,0){5}}
\put(37,8){\makebox(0,0){$\cdots$}}
\put(55,8){\makebox(0,0){$\scriptstyle\langle$}}
\put(50,8){\makebox(0,0){$\bullet$}}
\put(60,8){\makebox(0,0){$\bullet$}}
\put(50,9){\line(1,0){10}}
\put(50,7){\line(1,0){10}}
\end{picture})=\begin{picture}(65,20)(0,5)
\put(2,13){\makebox(0,0)[b]{$\scriptstyle a$}}
\put(12,13){\makebox(0,0)[b]{$\scriptstyle b$}}
\put(22,13){\makebox(0,0)[b]{$\scriptstyle c$}}
\put(50,13){\makebox(0,0)[b]{$\scriptstyle d$}}
\put(60,13){\makebox(0,0)[b]{$\scriptstyle e$}}
\put(2,8){\makebox(0,0){$\bullet$}}
\put(12,8){\makebox(0,0){$\bullet$}}
\put(22,8){\makebox(0,0){$\bullet$}}
\put(2,8){\line(1,0){25}}
\put(50,8){\line(-1,0){5}}
\put(37,8){\makebox(0,0){$\cdots$}}
\put(55,8){\makebox(0,0){$\scriptstyle\langle$}}
\put(50,8){\makebox(0,0){$\bullet$}}
\put(60,8){\makebox(0,0){$\bullet$}}
\put(50,9){\line(1,0){10}}
\put(50,7){\line(1,0){10}}
\end{picture}\end{equation}
and
\begin{equation}\label{Hone}
H^1({\mathfrak{g}}_-,\begin{picture}(85,20)(-20,5)
\put(-13,8){\makebox(0,0){$\bullet$}}
\put(-13,13){\makebox(0,0)[b]{$\scriptstyle k-1$}}
\put(2,13){\makebox(0,0)[b]{$\scriptstyle a$}}
\put(12,13){\makebox(0,0)[b]{$\scriptstyle b$}}
\put(22,13){\makebox(0,0)[b]{$\scriptstyle c$}}
\put(50,13){\makebox(0,0)[b]{$\scriptstyle d$}}
\put(60,13){\makebox(0,0)[b]{$\scriptstyle e$}}
\put(2,8){\makebox(0,0){$\bullet$}}
\put(12,8){\makebox(0,0){$\bullet$}}
\put(22,8){\makebox(0,0){$\bullet$}}
\put(-13,8){\line(1,0){40}}
\put(50,8){\line(-1,0){5}}
\put(37,8){\makebox(0,0){$\cdots$}}
\put(55,8){\makebox(0,0){$\scriptstyle\langle$}}
\put(50,8){\makebox(0,0){$\bullet$}}
\put(60,8){\makebox(0,0){$\bullet$}}
\put(50,9){\line(1,0){10}}
\put(50,7){\line(1,0){10}}
\end{picture})=\begin{picture}(75,20)(-10,5)
\put(-3,13){\makebox(0,0)[b]{$\scriptstyle a+k$}}
\put(12,13){\makebox(0,0)[b]{$\scriptstyle b$}}
\put(22,13){\makebox(0,0)[b]{$\scriptstyle c$}}
\put(50,13){\makebox(0,0)[b]{$\scriptstyle d$}}
\put(60,13){\makebox(0,0)[b]{$\scriptstyle e$}}
\put(-3,8){\makebox(0,0){$\bullet$}}
\put(12,8){\makebox(0,0){$\bullet$}}
\put(22,8){\makebox(0,0){$\bullet$}}
\put(-3,8){\line(1,0){30}}
\put(50,8){\line(-1,0){5}}
\put(37,8){\makebox(0,0){$\cdots$}}
\put(55,8){\makebox(0,0){$\scriptstyle\langle$}}
\put(50,8){\makebox(0,0){$\bullet$}}
\put(60,8){\makebox(0,0){$\bullet$}}
\put(50,9){\line(1,0){10}}
\put(50,7){\line(1,0){10}}
\end{picture},\end{equation}
where these equalities are interpreted as isomorphisms of
${\mathfrak{sp}}(2n,{\mathbb{R}})$-modules. More precisely, it is easy to check
that the various complexes used to define and compute the Lie algebra
cohomology are complexes of ${\mathfrak{g}}_0$-modules. Hence, the
cohomologies are ${\mathfrak{g}}_0$-modules and, in particular,
${\mathfrak{sp}}(2n,{\mathbb{R}})$-modules under restriction
${\mathfrak{sp}}(2n,{\mathbb{R}})\subset{\mathfrak{g}}_0$.
\begin{remark}In fact, with more care, the complexes used in defining and
computing the Lie algebra cohomology of a representation of ${\mathfrak{g}}$
restricted to ${\mathfrak{g}}_-$ are all complexes of ${\mathfrak{p}}$-modules,
where
${\mathfrak{p}}={\mathfrak{g}}_0\oplus{\mathfrak{g}}_1\oplus{\mathfrak{g}}_2$.
\end{remark}

The point of these considerations is that (\ref{Hzero}) and (\ref{Hone}) allow
us to compute the spaces $K_H^\ell$ for a large class of geometrically natural
operators on contact manifolds. Recall that $K_H^\ell$ are defined as
intersections (\ref{KHell}) but we shall see that they also occur in the Lie
algebra cohomology that we have been discussing. To proceed, let us consider
the action of the grading element (\ref{ge}) from ${\mathfrak{g}}_0$ on
${\mathbb{V}}$. The representation ${\mathbb{V}}$ thereby splits as a direct
sum of eigenspaces, each of which is a ${\mathfrak{g}}_0$-module and,
following~\cite{bceg}, it is convenient for our purposes to write this
decomposition as
\begin{equation}\label{gradedV}
{\mathbb{V}}={\mathbb{V}}_0\oplus{\mathbb{V}}_1\oplus
{\mathbb{V}}_2\oplus\cdots\oplus{\mathbb{V}}_N,\quad
\mbox{in which }{\mathfrak{g}}_i{\mathbb{V}}_j\subseteq{\mathbb{V}}_{i+j}.
\end{equation}
Standard representation theory~\cite{humphreys} allows us to conclude that
$${\mathbb{V}}=\begin{picture}(85,20)(-20,5)
\put(-13,8){\makebox(0,0){$\bullet$}}
\put(-13,13){\makebox(0,0)[b]{$\scriptstyle k-1$}}
\put(2,13){\makebox(0,0)[b]{$\scriptstyle a$}}
\put(12,13){\makebox(0,0)[b]{$\scriptstyle b$}}
\put(22,13){\makebox(0,0)[b]{$\scriptstyle c$}}
\put(50,13){\makebox(0,0)[b]{$\scriptstyle d$}}
\put(60,13){\makebox(0,0)[b]{$\scriptstyle e$}}
\put(2,8){\makebox(0,0){$\bullet$}}
\put(12,8){\makebox(0,0){$\bullet$}}
\put(22,8){\makebox(0,0){$\bullet$}}
\put(-13,8){\line(1,0){40}}
\put(50,8){\line(-1,0){5}}
\put(37,8){\makebox(0,0){$\cdots$}}
\put(55,8){\makebox(0,0){$\scriptstyle\langle$}}
\put(50,8){\makebox(0,0){$\bullet$}}
\put(60,8){\makebox(0,0){$\bullet$}}
\put(50,9){\line(1,0){10}}
\put(50,7){\line(1,0){10}}
\end{picture}
\Rightarrow{\mathbb{V}}_0=\begin{picture}(65,20)(0,5)
\put(2,13){\makebox(0,0)[b]{$\scriptstyle a$}}
\put(12,13){\makebox(0,0)[b]{$\scriptstyle b$}}
\put(22,13){\makebox(0,0)[b]{$\scriptstyle c$}}
\put(50,13){\makebox(0,0)[b]{$\scriptstyle d$}}
\put(60,13){\makebox(0,0)[b]{$\scriptstyle e$}}
\put(2,8){\makebox(0,0){$\bullet$}}
\put(12,8){\makebox(0,0){$\bullet$}}
\put(22,8){\makebox(0,0){$\bullet$}}
\put(2,8){\line(1,0){25}}
\put(50,8){\line(-1,0){5}}
\put(37,8){\makebox(0,0){$\cdots$}}
\put(55,8){\makebox(0,0){$\scriptstyle\langle$}}
\put(50,8){\makebox(0,0){$\bullet$}}
\put(60,8){\makebox(0,0){$\bullet$}}
\put(50,9){\line(1,0){10}}
\put(50,7){\line(1,0){10}}
\end{picture}\mbox{ and }N=2(k-1+a+b+c+\cdots+d+e).$$
Now let us suppose that $n\geq 2$ and consider the complex
(\ref{bestLiecomplex}) with a view to computing the Lie algebra cohomologies
$H^0({\mathfrak{g}}_{-1},{\mathbb{V}})$ and
$H^1({\mathfrak{g}}_{-1},{\mathbb{V}})$. Because the differentials
$\partial_H$ are compatible with the action of the grading element, it follows
that $\partial_H$ must be compatible with the grading~(\ref{gradedV}), whence
(\ref{bestLiecomplex}) splits into a series of complexes
\begin{equation}\label{series}0\to{\mathbb{V}}_j
\xrightarrow{\,\partial_H\,}{\mathfrak{g}}_1\otimes{\mathbb{V}}_{j-1}
\xrightarrow{\,\partial_H\,}
\Lambda_\perp^2{\mathfrak{g}}_1\otimes{\mathbb{V}}_{j-2}.\end{equation} Let
$${\mathbb{E}}\equiv{\mathbb{V}}_0=\begin{picture}(65,20)(0,5)
\put(2,13){\makebox(0,0)[b]{$\scriptstyle a$}}
\put(12,13){\makebox(0,0)[b]{$\scriptstyle b$}}
\put(22,13){\makebox(0,0)[b]{$\scriptstyle c$}}
\put(50,13){\makebox(0,0)[b]{$\scriptstyle d$}}
\put(60,13){\makebox(0,0)[b]{$\scriptstyle e$}}
\put(2,8){\makebox(0,0){$\bullet$}} \put(12,8){\makebox(0,0){$\bullet$}}
\put(22,8){\makebox(0,0){$\bullet$}} \put(2,8){\line(1,0){25}}
\put(50,8){\line(-1,0){5}} \put(37,8){\makebox(0,0){$\cdots$}}
\put(55,8){\makebox(0,0){$\scriptstyle\langle$}}
\put(50,8){\makebox(0,0){$\bullet$}} \put(60,8){\makebox(0,0){$\bullet$}}
\put(50,9){\line(1,0){10}} \put(50,7){\line(1,0){10}}
\end{picture}\quad\mbox{and}\quad
{\mathbb{F}}\equiv\begin{picture}(75,20)(-10,5)
\put(-3,13){\makebox(0,0)[b]{$\scriptstyle a+k$}}
\put(12,13){\makebox(0,0)[b]{$\scriptstyle b$}}
\put(22,13){\makebox(0,0)[b]{$\scriptstyle c$}}
\put(50,13){\makebox(0,0)[b]{$\scriptstyle d$}}
\put(60,13){\makebox(0,0)[b]{$\scriptstyle e$}}
\put(-3,8){\makebox(0,0){$\bullet$}} \put(12,8){\makebox(0,0){$\bullet$}}
\put(22,8){\makebox(0,0){$\bullet$}} \put(-3,8){\line(1,0){30}}
\put(50,8){\line(-1,0){5}} \put(37,8){\makebox(0,0){$\cdots$}}
\put(55,8){\makebox(0,0){$\scriptstyle\langle$}}
\put(50,8){\makebox(0,0){$\bullet$}} \put(60,8){\makebox(0,0){$\bullet$}}
\put(50,9){\line(1,0){10}} \put(50,7){\line(1,0){10}}
\end{picture}.$$
Kostant's Theorem~\cite{kostant} implies that
$H^0({\mathfrak{g}}_{-1},{\mathbb{V}})={\mathbb{E}}$ and
$H^1({\mathfrak{g}}_{-1},{\mathbb{V}})={\mathbb{F}}$. The action of
the grading element forces
the first cohomology ${\mathbb{F}}$ to arise from the complex (\ref{series})
when $j=k$. The identification ${\mathbb{V}}_0={\mathbb{E}}$ is also built into
(\ref{series}) as the trivial case $j=0$. Otherwise, we conclude that for
$j=1,2,\cdots,\hat{k},\cdots,N$ the complexes (\ref{series}) are exact.
To state and interpret the algebraic consequences of these statements we make
some preliminary observations and introduce some suggestive notation.
Notice that the action of
${\mathfrak{sp}}(2n,{\mathbb{R}})\subset{\mathfrak{g}}_0$ on ${\mathfrak{g}}_2$
is trivial. Therefore, we may identify ${\mathfrak{g}}_2={\mathbb{R}}$, view
the Lie bracket $\Lambda^2{\mathfrak{g}}_1\to{\mathfrak{g}}_2$ as a
non-degenerate skew form, and thereby ${\mathfrak{g}}_1$ as the defining
representation of ${\mathfrak{sp}}(2n,{\mathbb{R}})$, in which case we shall
denote it by~${\mathbb{S}}_\perp$. As representations of
${\mathfrak{sp}}(2n,{\mathbb{R}})$, we have the irreducible decomposition
\begin{equation}\label{symplecticdecomposition}
\textstyle\bigotimes^2{\mathbb{S}}_\perp=
\underbrace{\textstyle\bigodot^2{\mathbb{S}}_\perp\oplus{\mathbb{R}}}_{\mbox{
\small$\equiv{\mathbb{S}}_\perp^2$}}
\oplus\Lambda_\perp^2{\mathbb{S}}_\perp\end{equation}
defining ${\mathbb{S}}_\perp^2$, where
$\Lambda_\perp^2{\mathbb{S}}_\perp\subset\Lambda^2{\mathbb{S}}_\perp$ is the
kernel of the symplectic form $\Lambda^2{\mathbb{S}}_\perp\to{\mathbb{R}}$. Let
\begin{equation}\label{defofSperpell}{\mathbb{S}}_\perp^\ell\equiv
({\mathbb{S}}_\perp\otimes{\mathbb{S}}_\perp^{\ell-1})\cap
({\mathbb{S}}_\perp^{\ell-1}\otimes{\mathbb{S}}_\perp)\quad\forall\,
\ell\geq 3.\end{equation}
\begin{proposition}\label{algebraicconsequences} For $n\geq 2$,
the algebraic consequences alluded to above are
$$\begin{array}{rcll}{\mathbb{V}}_j&=&{\mathbb{S}}_\perp^j\otimes{\mathbb{E}},&
\forall\,j<k\\
{\mathbb{V}}_k&=&{\mathbb{K}}_H\\
{\mathbb{V}}_j&=&({\mathbb{S}}_\perp^{j-k}\otimes{\mathbb{K}}_H)
\cap({\mathbb{S}}_\perp^j\otimes{\mathbb{E}}),&\forall\,j>k,
\end{array}$$
where ${\mathbb{K}}_H$ is the kernel of the natural projection
${\mathbb{S}}_\perp^k\otimes{\mathbb{E}}\to{\mathbb{F}}$.
\end{proposition}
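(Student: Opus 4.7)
The approach is a grading-by-grading induction driven by Kostant's theorem. Because $\partial_H$ is $\mathfrak{g}_0$-equivariant, the reduced complex (\ref{bestLiecomplex}) splits, along the grading (\ref{gradedV}), as the direct sum of the three-term complexes (\ref{series}). The $\mathfrak{sp}(2n,\mathbb{R})$-types in (\ref{Hzero})--(\ref{Hone}) sit at gradings $0$ (namely $\mathbb{E} = \mathbb{V}_0$) and $k$ (namely $\mathbb{F}$), so (\ref{series}) is exact for every $j \notin \{0, k\}$; at $j = 0$ it simply reads $\mathbb{V}_0 = \mathbb{E}$; and at $j = k$ it has vanishing $H^0$ but middle cohomology $\mathbb{F}$.

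I would then induct on $j$. The base case $\mathbb{V}_0 = \mathbb{E}$ is in hand. For $0 < j < k$, two-sided exactness of (\ref{series}) forces
$$\mathbb{V}_j \;=\; \ker\bigl(\partial_H: \mathbb{S}_\perp \otimes \mathbb{V}_{j-1} \longrightarrow \Lambda_\perp^2 \mathbb{S}_\perp \otimes \mathbb{V}_{j-2}\bigr);$$
substituting $\mathbb{V}_{j-1} = \mathbb{S}_\perp^{j-1} \otimes \mathbb{E}$ from the induction hypothesis, a multilinear-algebra calculation (the iterated avatar of Lemma~\ref{Sperpthree}) identifies this kernel with $\mathbb{S}_\perp^j \otimes \mathbb{E}$ via the recursion (\ref{defofSperpell}). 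At $j = k$ the same exact-sequence analysis gives instead
$$0 \longrightarrow \mathbb{V}_k \longrightarrow \mathbb{S}_\perp^k \otimes \mathbb{E} \longrightarrow \mathbb{F} \longrightarrow 0,$$
in which the surjection onto the Cartan-component $\mathbb{F}$ is, by Schur's lemma and $\mathfrak{g}_0$-equivariance, the canonical one, so $\mathbb{V}_k$ is forced to be $\mathbb{K}_H$. For $j > k$ full exactness returns, and the same kernel computation applied to the inductive description of $\mathbb{V}_{j-1}$ delivers $\mathbb{V}_j = (\mathbb{S}_\perp^{j-k} \otimes \mathbb{K}_H) \cap (\mathbb{S}_\perp^j \otimes \mathbb{E})$, once one notes that $\mathbb{S}_\perp^j \otimes \mathbb{E} \subseteq \mathbb{S}_\perp \otimes \mathbb{S}_\perp^{j-1} \otimes \mathbb{E}$ automatically forces the first $j-k$ slots into $\mathbb{S}_\perp^{j-k}$ by iteration of (\ref{defofSperpell}).

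The main obstacle is this kernel-identification step. Writing $\partial_H$ explicitly as antisymmetrization into the first two $\mathbb{S}_\perp$-slots followed by the projection $\Lambda^2 \mathbb{S}_\perp \to \Lambda_\perp^2 \mathbb{S}_\perp$ (using the Killing-form identification $\mathfrak{g}_{-1}^* \cong \mathfrak{g}_1 = \mathbb{S}_\perp$ and the fact that the $\mathfrak{g}_{-1}$-action on $\mathbb{S}_\perp^{j-1} \otimes \mathbb{E}$ contracts the leading slot), one sees that membership in the kernel is exactly the condition that the skew part in the first two slots factor through the Levi form. This is precisely the defining property of $\mathbb{S}_\perp^j$ in (\ref{intrinsicdefinitionofSperp}) and its recursive form (\ref{defofSperpell}). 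The bookkeeping is notationally heavy but conceptually no more than a multi-slot repetition of the representation-theoretic identity already established in Lemma~\ref{Sperpthree}, so no essentially new idea beyond that lemma and Kostant's theorem is required.
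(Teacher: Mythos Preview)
Your proposal is correct and follows essentially the same approach as the paper: both argue by induction on the grading $j$, invoking Kostant's theorem to locate the cohomology of (\ref{series}) at $j=0$ and $j=k$, and identifying $\mathbb{V}_j$ as a kernel at every other level. The paper makes one point more explicit than you do, namely the factorisation $\partial_H=\eth\otimes\mathrm{Id}$ via the composition~(\ref{writepartialH}), which is what guarantees that $\mathbb{E}$ rides along as a passenger and reduces the kernel computation to the purely symplectic identity $\ker\eth=\mathbb{S}_\perp^j$; your phrase ``the $\mathfrak{g}_{-1}$-action contracts the leading slot'' is the same observation in different words, and your invocation of Schur's lemma at $j=k$ is a nice touch the paper leaves implicit.
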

\begin{proof}If $j=1<k$, then the exact sequence (\ref{series}) reduces to
$$0\to{\mathbb{V}}_1\xrightarrow{\,\partial_H\,}
{\mathfrak{g}}_1\otimes{\mathbb{V}}_{j-1}\to 0,\quad\mbox{equivalently }
{\mathbb{V}}_1={\mathbb{S}}_\perp\otimes{\mathbb{E}}.$$
For $j=2,\cdots,k-1$, suppose by induction that
${\mathbb{V}}_{j-1}={\mathbb{S}}_\perp^{j-1}\otimes{\mathbb{E}}$. Then
(\ref{series}) reads
\begin{equation}\label{nowreads}
0\to{\mathbb{V}}_j\to
{\mathbb{S}}_\perp\otimes{\mathbb{S}}_\perp^{j-1}\otimes{\mathbb{E}}
\xrightarrow{\,\partial_H\,}
\Lambda_\perp^2{\mathbb{S}}_\perp\otimes
{\mathbb{S}}_\perp^{j-2}\otimes{\mathbb{E}}.\end{equation}
Tracing back through the definitions, it may be verified that the
homomorphism $\partial_H$ in (\ref{nowreads}) does not see~${\mathbb{E}}$,
i.e.~there is a homomorphism
$$\eth:{\mathbb{S}}_\perp\otimes{\mathbb{S}}_\perp^{j-1}\to
\Lambda_\perp^2{\mathbb{S}}_\perp\otimes{\mathbb{S}}_\perp^{j-2}$$
so that $\partial_H=\eth\otimes{\mathrm{Id}}$. Explicitly, $\eth$ is the
composition
$${\mathbb{S}}_\perp\otimes{\mathbb{S}}_\perp^{j-1}\hookrightarrow
{\mathbb{S}}_\perp\otimes{\mathbb{S}}_\perp\otimes{\mathbb{S}}_\perp^{j-2}
\xrightarrow{\,\wedge_\perp\otimes{\mathrm{Id}}\,}
\Lambda_\perp^2{\mathbb{S}}_\perp\otimes{\mathbb{S}}_\perp^{j-2}$$
where $\wedge_\perp$ is the projection onto $\Lambda_\perp^2{\mathbb{S}}_\perp$
visible in the symplectic decomposition~(\ref{symplecticdecomposition}). More
explicitly, these statements easily follow from the observation that the
operator $\partial_H:{\mathfrak{g}}_1\otimes{\mathbb{V}}\to
\Lambda_\perp^2{\mathfrak{g}}_1\otimes{\mathbb{V}}$ in (\ref{bestLiecomplex})
may itself be written as the composition
\begin{equation}\label{writepartialH}
{\mathfrak{g}}_1\otimes{\mathbb{V}}
\xrightarrow{\,{\mathrm{Id}}\otimes\partial_H\,}
{\mathfrak{g}}_1\otimes{\mathfrak{g}}_1\otimes{\mathbb{V}}
\xrightarrow{\,\wedge_\perp^2\otimes{\mathrm{Id}}\,}
\Lambda_\perp^2{\mathfrak{g}}_1\otimes{\mathbb{V}}.\end{equation}
When $j=2$ it follows immediately from (\ref{symplecticdecomposition}) that the
kernel of $\eth$ is~${\mathbb{S}}_\perp^2$. More generally, it follows from the
definition (\ref{defofSperpell}) that
$$0\to{\mathbb{S}}_\perp^j\to
{\mathbb{S}}_\perp\otimes{\mathbb{S}}_\perp^{j-1}\xrightarrow{\,\eth\,}
\Lambda_\perp^2{\mathbb{S}}_\perp\otimes{\mathbb{S}}_\perp^{j-2}$$
is exact. Therefore ${\mathbb{V}}_j={\mathbb{S}}_\perp^j\otimes{\mathbb{E}}$,
for $j<k$. Now we encounter the complex
$$0\to{\mathbb{V}}_k
\to{\mathbb{S}}_\perp\otimes{\mathbb{S}}_\perp^{k-1}\otimes{\mathbb{E}}
\xrightarrow{\,\partial_H=\eth\otimes{\mathrm{Id}}\,}
\Lambda_\perp^2{\mathbb{S}}_\perp\otimes
{\mathbb{S}}_\perp^{k-2}\otimes{\mathbb{E}}$$
but it is no longer exact. Instead, the kernel of $\partial_H$ is
${\mathbb{S}}_\perp^k\otimes{\mathbb{E}}$ and we have a short exact
sequence
$$0\to{\mathbb{V}}_k\to{\mathbb{S}}_\perp^k\otimes{\mathbb{E}}\to{\mathbb{F}}
\to 0$$
in accordance with~(\ref{Hone}.) It follows that
${\mathbb{V}}_k={\mathbb{K}}_H$ and the remaining statements follow by
induction from the exactness of (\ref{series}) for $j>k$.\end{proof}
Although the proofs are slightly different, with suitable caveats the results
for $n=1$ are essentially the same. The main difference is that
${\mathbb{S}}_\perp^3$ must be defined separately. Since
$\Lambda_\perp^2{\mathbb{S}}_\perp=0$,
$$\textstyle{\mathbb{S}}_\perp^2=\bigotimes^2{\mathbb{S}}_\perp=
\bigodot^2{\mathbb{S}}_\perp\oplus{\mathbb{R}}$$
in accordance with (\ref{symplecticdecomposition}), but instead of
(\ref{defofSperpell}), we
define
$$\textstyle{\mathbb{S}}_\perp^3\equiv
\bigodot^3{\mathbb{S}}_\perp\oplus\{Q_aL_{bc}+Q_bL_{ac}+Q_cL_{ab}\mbox{ s.t.\ }
Q_a\in{\mathbb{S}}_\perp\},$$
where $L_{ab}\in\Lambda^2{\mathbb{S}}_\perp$ is the symplectic form inverse to
the Lie bracket
$$\Lambda^2{\mathbb{S}}_\perp=\Lambda^2{\mathfrak{g}}_1\to{\mathfrak{g}}_2=
{\mathbb{R}}.$$
With this definition in place and
$${\mathbb{S}}_\perp^\ell\equiv
({\mathbb{S}}_\perp\otimes{\mathbb{S}}_\perp^{\ell-1})\cap
({\mathbb{S}}_\perp^{\ell-1}\otimes{\mathbb{S}}_\perp)\quad\forall\,
\ell\geq 4, $$
the conclusions are unchanged:--
\begin{proposition}\label{alg_con_three} For $n=1$,
$$\begin{array}{rcll}{\mathbb{V}}_j&=&{\mathbb{S}}_\perp^j\otimes{\mathbb{E}},&
\forall\,j<k\\
{\mathbb{V}}_k&=&{\mathbb{K}}_H\\
{\mathbb{V}}_j&=&({\mathbb{S}}_\perp^{j-k}\otimes{\mathbb{K}}_H)
\cap({\mathbb{S}}_\perp^j\otimes{\mathbb{E}}),&\forall\,j>k,
\end{array}$$
where ${\mathbb{K}}_H$ is the kernel of the natural projection
${\mathbb{S}}_\perp^k\otimes{\mathbb{E}}\to{\mathbb{F}}$.
\end{proposition}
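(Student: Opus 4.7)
The plan is to adapt the proof of Proposition~\ref{algebraicconsequences} by replacing the complex (\ref{bestLiecomplex}) with its three-dimensional counterpart (\ref{bestLiecomplexthree}) supplied by Lemma~\ref{en_is_one}. First I would split the complex
$$0\to\mathbb{V}\xrightarrow{\,\partial_H\,}\mathfrak{g}_1\otimes\mathbb{V}
\xrightarrow{\,\partial_H\,}\mathfrak{g}_1\otimes\mathfrak{g}_2\otimes\mathbb{V}$$
into its graded pieces under the adjoint action of the grading element~(\ref{ge}). Since $\mathfrak{g}_1$ and $\mathfrak{g}_2$ carry weights $1$ and $2$ respectively, the grade-$j$ piece reads
$$0\to\mathbb{V}_j\xrightarrow{\,\partial_H\,}\mathfrak{g}_1\otimes\mathbb{V}_{j-1}
\xrightarrow{\,\partial_H\,}\mathfrak{g}_1\otimes\mathfrak{g}_2\otimes\mathbb{V}_{j-3},$$
and Kostant's theorem, as spelt out in~(\ref{Hzero}) and~(\ref{Hone}), asserts that this sequence is exact except at $j=0$ (where the cohomology is $\mathbb{E}$) and at $j=k$ (where it is $\mathbb{F}$).

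Second, I would argue that the differential $\partial_H$ in (\ref{bestLiecomplexthree}) factors in analogy with~(\ref{writepartialH}), via the universal algebraic map $\mathfrak{g}_1\otimes\mathfrak{g}_1\to\mathfrak{g}_1\otimes\mathfrak{g}_2$ obtained from the formula in Lemma~\ref{en_is_one}. Under the identifications $\mathfrak{g}_1=\mathbb{S}_\perp$ and $\mathfrak{g}_2=\mathbb{R}$ (via the Lie bracket), this universal map is exactly the homomorphism $\Sigma$ of (\ref{globalhom}). Consequently $\partial_H=\eth_H\otimes\mathrm{Id}_{\mathbb{E}}$ for a map
$$\eth_H:\mathbb{S}_\perp\otimes\mathbb{S}_\perp^{j-1}
\longrightarrow\mathbb{S}_\perp\otimes\mathbb{S}_\perp^{j-3},$$
and its kernel, by the description of $\mathbb{S}_\perp^3$ given in the statement together with the inductive definition of $\mathbb{S}_\perp^\ell$ for $\ell\geq 4$, is precisely $\mathbb{S}_\perp^j$.

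Third, I would proceed by induction on~$j$, exactly paralleling the induction in the proof of Proposition~\ref{algebraicconsequences}. For $j=1,2$ the right-hand term $\mathbb{V}_{j-3}$ vanishes, so $\mathbb{V}_j=\bigotimes^j\mathbb{S}_\perp\otimes\mathbb{E}=\mathbb{S}_\perp^j\otimes\mathbb{E}$ (using that $\mathbb{S}_\perp^2=\bigotimes^2\mathbb{S}_\perp$ when $n=1$). For $j=3,\ldots,k-1$ exactness together with the inductive hypothesis $\mathbb{V}_{j-1}=\mathbb{S}_\perp^{j-1}\otimes\mathbb{E}$ and the identification $\ker\eth_H=\mathbb{S}_\perp^j$ give $\mathbb{V}_j=\mathbb{S}_\perp^j\otimes\mathbb{E}$. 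At $j=k$ the sequence is no longer exact; instead it yields the short exact sequence
$$0\to\mathbb{V}_k\to\mathbb{S}_\perp^k\otimes\mathbb{E}\to\mathbb{F}\to 0,$$
whence $\mathbb{V}_k=\mathbb{K}_H$. For $j>k$ the complex is again exact, and a further induction using $\mathbb{V}_k=\mathbb{K}_H$ delivers the claimed intersection formula.

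The main obstacle, and the step requiring the most care, is the base case $j=3$: one must verify that the kernel of the algebraic map $\Sigma$ on $\bigotimes^3\mathbb{S}_\perp$ really does coincide with the direct-sum description of $\mathbb{S}_\perp^3$ given in the statement. This is exactly the computation surrounding (\ref{anotherway})–(\ref{hom}), which shows the kernel of (\ref{globalhom}) to be the span displayed there. Once this identification is in hand, all higher $\mathbb{S}_\perp^\ell$ are characterised purely by intersection, the inductive step is formally identical to the $n\geq 2$ argument, and the proposition follows.
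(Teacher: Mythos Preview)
Your proposal is correct and follows essentially the same approach as the paper: replace the complex~(\ref{bestLiecomplex}) by~(\ref{bestLiecomplexthree}), split into graded pieces with the shift by~$3$ coming from the weight of~$\mathfrak{g}_2$, factor the second $\partial_H$ through the algebraic map~$\Sigma$ whose kernel is~$\mathbb{S}_\perp^3$, and induct exactly as in Proposition~\ref{algebraicconsequences}. One small slip: the ``universal algebraic map'' you invoke should have domain $\bigotimes^3\mathfrak{g}_1$ rather than $\mathfrak{g}_1\otimes\mathfrak{g}_1$ (it arises from iterating $\partial_H:\mathbb{V}\to\mathfrak{g}_1\otimes\mathbb{V}$ \emph{twice} before applying~$\Sigma$), but since you then correctly identify it with~$\Sigma$ from~(\ref{globalhom}) this does not affect the argument.
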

\begin{proof} The proof of Proposition~\ref{algebraicconsequences} need only be
modified as follows. We are obliged to use (\ref{bestLiecomplexthree})
rather than~(\ref{bestLiecomplex}). The homomorphisms $\partial_H$ must
again respect gradings but ${\mathfrak{g}}_2$ has weight $2$ with respect to
the action of the grading element so we obtain complexes
\begin{equation}\label{seriesthree}0\to{\mathbb{V}}_j
\xrightarrow{\,\partial_H\,}{\mathfrak{g}}_1\otimes{\mathbb{V}}_{j-1}
\xrightarrow{\,\partial_H\,}
{\mathfrak{g}}_1\otimes{\mathfrak{g}}_2\otimes{\mathbb{V}}_{j-3}\end{equation}
replacing~(\ref{series}). These complexes makes themselves felt through a
new version of the homomorphism~$\eth$. Specifically, the
short exact sequence
$$\textstyle 0\to{\mathbb{S}}_\perp^3\to\bigotimes^3{\mathbb{S}}_\perp
\xrightarrow{\,\Sigma\,}{\mathbb{S}}_\perp\to 0,$$
where $\Sigma$ is the homomorphism
$$\textstyle\bigotimes^3{\mathbb{S}}_\perp=\bigotimes^3{\mathfrak{g}}_1
\ni\phi_{abc}\longmapsto
L^{ab}\big(\phi_{abc}-\phi_{cab}\big)\in{\mathfrak{g}}_1\otimes{\mathfrak{g}}_2
={\mathbb{S}}_\perp\otimes{\mathbb{R}}={\mathbb{S}}_\perp$$
and $L:{\mathfrak{g}}_1\otimes{\mathfrak{g}}_1\to{\mathfrak{g}}_2$ is Lie
bracket, replaces
$$\textstyle 0\to{\mathbb{S}}_\perp^2\to\bigotimes^2{\mathbb{S}}_\perp
\xrightarrow{\,\pi_\perp\,}\Lambda_\perp^2{\mathbb{S}}_\perp\to 0$$
and $\eth$ is defined as the composition
$$\textstyle{\mathbb{S}}_\perp\otimes{\mathbb{S}}_\perp^{j-1}\hookrightarrow
\bigotimes^3{\mathbb{S}}_\perp\otimes{\mathbb{S}}_\perp^{j-3}
\xrightarrow{\,\Sigma\otimes{\mathrm{Id}}\,}
{\mathbb{S}}_\perp\otimes{\mathbb{S}}_\perp^{j-3}.$$
Parallel to (\ref{writepartialH}) is composition
$${\mathfrak{g}}_1\otimes{\mathbb{V}}
\xrightarrow{\,{\mathrm{Id}}\otimes\partial_H\,}
{\mathfrak{g}}_1\otimes{\mathfrak{g}}_1\otimes{\mathbb{V}}
\xrightarrow{\,{\mathrm{Id}}\otimes{\mathrm{Id}}\otimes\partial_H\,}
{\mathfrak{g}}_1\otimes{\mathfrak{g}}_1\otimes{\mathfrak{g}}_1
\otimes{\mathbb{V}}\xrightarrow{\,\Sigma\otimes{\mathrm{Id}}\,}
{\mathfrak{g}}_1\otimes{\mathfrak{g}}_2\otimes{\mathbb{V}}$$
as a way of writing $\partial_H:{\mathfrak{g}}_1\otimes{\mathbb{V}}\to
{\mathfrak{g}}_1\otimes{\mathfrak{g}}_2\otimes{\mathbb{V}}$
in~(\ref{bestLiecomplexthree}). Apart from this key alteration, the proof
follows exactly the same lines and details are left to the reader.
\end{proof}

Propositions~\ref{algebraicconsequences} and~\ref{alg_con_three} have immediate
geometric consequences as follows. Fix a contact manifold $M$ of dimension
$2n+1$ with contact distribution $H$ as usual. Any finite-dimensional
representation ${\mathbb{E}}$ of~${\mathrm{Sp}}(2n,{\mathbb{R}})$, gives rises
to a vector bundle $E$ on $M$ by induction from the bundle of symplectic
co-frames for~$H$. We shall refer to such $E$ as {\em symplectic\/} vector
bundles. In particular, the defining representation, which we have been writing
above as~${\mathbb{S}}_\perp$, gives rise to the bundle $\Lambda_H^1$ dual
to~$H$. More generally, the representations ${\mathbb{S}}_\perp^\ell$ give rise
to bundles that we have already been writing as $S_\perp^\ell$
in~\S\ref{contact} and~\S\ref{highercontact}. We shall refer to the bundle $E$
as {\em irreducible\/} if the representation ${\mathbb{E}}$ is irreducible.
Thus, the irreducible symplectic bundles on $M$ can be parameterised
exactly as for irreducible representations
of~${\mathfrak{sp}}(2n,{\mathbb{R}})$, namely as
$$\begin{picture}(65,20)(0,5)
\put(2,13){\makebox(0,0)[b]{$\scriptstyle a$}}
\put(12,13){\makebox(0,0)[b]{$\scriptstyle b$}}
\put(22,13){\makebox(0,0)[b]{$\scriptstyle c$}}
\put(50,13){\makebox(0,0)[b]{$\scriptstyle d$}}
\put(60,13){\makebox(0,0)[b]{$\scriptstyle e$}}
\put(2,8){\makebox(0,0){$\bullet$}}
\put(12,8){\makebox(0,0){$\bullet$}}
\put(22,8){\makebox(0,0){$\bullet$}}
\put(2,8){\line(1,0){25}}
\put(50,8){\line(-1,0){5}}
\put(37,8){\makebox(0,0){$\cdots$}}
\put(55,8){\makebox(0,0){$\scriptstyle\langle$}}
\put(50,8){\makebox(0,0){$\bullet$}}
\put(60,8){\makebox(0,0){$\bullet$}}
\put(50,9){\line(1,0){10}}
\put(50,7){\line(1,0){10}}
\end{picture}\quad\mbox{for }a,b,c,\ldots,d,e\in{\mathbb{Z}}_{\geq 0}.$$
In particular,
$${\mathbb{S}}_\perp=\begin{picture}(65,20)(0,5)
\put(2,13){\makebox(0,0)[b]{$\scriptstyle 1$}}
\put(12,13){\makebox(0,0)[b]{$\scriptstyle 0$}}
\put(22,13){\makebox(0,0)[b]{$\scriptstyle 0$}}
\put(50,13){\makebox(0,0)[b]{$\scriptstyle 0$}}
\put(60,13){\makebox(0,0)[b]{$\scriptstyle 0$}}
\put(2,8){\makebox(0,0){$\bullet$}}
\put(12,8){\makebox(0,0){$\bullet$}}
\put(22,8){\makebox(0,0){$\bullet$}}
\put(2,8){\line(1,0){25}}
\put(50,8){\line(-1,0){5}}
\put(37,8){\makebox(0,0){$\cdots$}}
\put(55,8){\makebox(0,0){$\scriptstyle\langle$}}
\put(50,8){\makebox(0,0){$\bullet$}}
\put(60,8){\makebox(0,0){$\bullet$}}
\put(50,9){\line(1,0){10}}
\put(50,7){\line(1,0){10}}
\end{picture}\qquad\textstyle\bigodot^k{\mathbb{S}}_\perp=
\begin{picture}(65,20)(0,5)
\put(2,13){\makebox(0,0)[b]{$\scriptstyle k$}}
\put(12,13){\makebox(0,0)[b]{$\scriptstyle 0$}}
\put(22,13){\makebox(0,0)[b]{$\scriptstyle 0$}}
\put(50,13){\makebox(0,0)[b]{$\scriptstyle 0$}}
\put(60,13){\makebox(0,0)[b]{$\scriptstyle 0$}}
\put(2,8){\makebox(0,0){$\bullet$}}
\put(12,8){\makebox(0,0){$\bullet$}}
\put(22,8){\makebox(0,0){$\bullet$}}
\put(2,8){\line(1,0){25}}
\put(50,8){\line(-1,0){5}}
\put(37,8){\makebox(0,0){$\cdots$}}
\put(55,8){\makebox(0,0){$\scriptstyle\langle$}}
\put(50,8){\makebox(0,0){$\bullet$}}
\put(60,8){\makebox(0,0){$\bullet$}}
\put(50,9){\line(1,0){10}}
\put(50,7){\line(1,0){10}}
\end{picture}
$$
and
$${\mathbb{E}}=\begin{picture}(65,20)(0,5)
\put(2,13){\makebox(0,0)[b]{$\scriptstyle a$}}
\put(12,13){\makebox(0,0)[b]{$\scriptstyle b$}}
\put(22,13){\makebox(0,0)[b]{$\scriptstyle c$}}
\put(50,13){\makebox(0,0)[b]{$\scriptstyle d$}}
\put(60,13){\makebox(0,0)[b]{$\scriptstyle e$}}
\put(2,8){\makebox(0,0){$\bullet$}}
\put(12,8){\makebox(0,0){$\bullet$}}
\put(22,8){\makebox(0,0){$\bullet$}}
\put(2,8){\line(1,0){25}}
\put(50,8){\line(-1,0){5}}
\put(37,8){\makebox(0,0){$\cdots$}}
\put(55,8){\makebox(0,0){$\scriptstyle\langle$}}
\put(50,8){\makebox(0,0){$\bullet$}}
\put(60,8){\makebox(0,0){$\bullet$}}
\put(50,9){\line(1,0){10}}
\put(50,7){\line(1,0){10}}
\end{picture}\implies\begin{picture}(75,20)(-10,5)
\put(-3,13){\makebox(0,0)[b]{$\scriptstyle a+k$}}
\put(12,13){\makebox(0,0)[b]{$\scriptstyle b$}}
\put(22,13){\makebox(0,0)[b]{$\scriptstyle c$}}
\put(50,13){\makebox(0,0)[b]{$\scriptstyle d$}}
\put(60,13){\makebox(0,0)[b]{$\scriptstyle e$}}
\put(-3,8){\makebox(0,0){$\bullet$}}
\put(12,8){\makebox(0,0){$\bullet$}}
\put(22,8){\makebox(0,0){$\bullet$}}
\put(-3,8){\line(1,0){30}}
\put(50,8){\line(-1,0){5}}
\put(37,8){\makebox(0,0){$\cdots$}}
\put(55,8){\makebox(0,0){$\scriptstyle\langle$}}
\put(50,8){\makebox(0,0){$\bullet$}}
\put(60,8){\makebox(0,0){$\bullet$}}
\put(50,9){\line(1,0){10}}
\put(50,7){\line(1,0){10}}
\end{picture}=\textstyle\bigodot^k{\mathbb{S}}_\perp\circledcirc{\mathbb{E}},$$
where $\circledcirc$ denotes the {\em Cartan product\/}, namely the unique
irreducible summand of the tensor product whose highest weight is the sum of
the highest weights of the two factors. We shall use the same notation for the
corresponding Cartan product of irreducible symplectic bundles. Finally let us
notice that
\begin{equation}\label{alg_counterpart}\textstyle{\mathbb{S}}_\perp^k\cong
\bigodot^k{\mathbb{S}}_\perp\oplus\bigodot^{k-2}{\mathbb{S}}_\perp
\oplus\bigodot^{k-4}{\mathbb{S}}_\perp\oplus\cdots\end{equation}
as the algebraic counterpart of~(\ref{bundlesum}).
A key geometric consequence alluded to above is as follows.
\begin{theorem}\label{plainBGGbound}
Suppose $E$ is an irreducible symplectic bundle on a contact manifold of
dimension $2n+1$ corresponding to the irreducible representation
$${\mathbb{E}}=\begin{picture}(65,20)(0,5)
\put(2,13){\makebox(0,0)[b]{$\scriptstyle a$}}
\put(12,13){\makebox(0,0)[b]{$\scriptstyle b$}}
\put(22,13){\makebox(0,0)[b]{$\scriptstyle c$}}
\put(50,13){\makebox(0,0)[b]{$\scriptstyle d$}}
\put(60,13){\makebox(0,0)[b]{$\scriptstyle e$}}
\put(2,8){\makebox(0,0){$\bullet$}}
\put(12,8){\makebox(0,0){$\bullet$}}
\put(22,8){\makebox(0,0){$\bullet$}}
\put(2,8){\line(1,0){25}}
\put(50,8){\line(-1,0){5}}
\put(37,8){\makebox(0,0){$\cdots$}}
\put(55,8){\makebox(0,0){$\scriptstyle\langle$}}
\put(50,8){\makebox(0,0){$\bullet$}}
\put(60,8){\makebox(0,0){$\bullet$}}
\put(50,9){\line(1,0){10}}
\put(50,7){\line(1,0){10}}
\end{picture}\quad\mbox{of}\enskip{\mathrm{Sp}}(2n,{\mathbb{R}}).$$
Let $F=\bigodot^k\!\Lambda_H^1\circledcirc E$. Suppose $D:E\to F$ is a
$k^{\mathrm{th}}$ order linear differential operator compatible with the
contact structure whose enhanced symbol is the composition
$$\textstyle S_\perp^k\otimes E\to\bigodot^k\!\Lambda_H^1\otimes E
\xrightarrow{\,\circledcirc\,}\bigodot^k\!\Lambda_H^1\circledcirc E=F.$$
Then $D$ is of finite-type and the dimension of its solution space is bounded
by the dimension of the representation
$$\begin{picture}(85,20)(-20,5)
\put(-13,8){\makebox(0,0){$\bullet$}}
\put(-13,13){\makebox(0,0)[b]{$\scriptstyle k-1$}}
\put(2,13){\makebox(0,0)[b]{$\scriptstyle a$}}
\put(12,13){\makebox(0,0)[b]{$\scriptstyle b$}}
\put(22,13){\makebox(0,0)[b]{$\scriptstyle c$}}
\put(50,13){\makebox(0,0)[b]{$\scriptstyle d$}}
\put(60,13){\makebox(0,0)[b]{$\scriptstyle e$}}
\put(2,8){\makebox(0,0){$\bullet$}}
\put(12,8){\makebox(0,0){$\bullet$}}
\put(22,8){\makebox(0,0){$\bullet$}}
\put(-13,8){\line(1,0){40}}
\put(50,8){\line(-1,0){5}}
\put(37,8){\makebox(0,0){$\cdots$}}
\put(55,8){\makebox(0,0){$\scriptstyle\langle$}}
\put(50,8){\makebox(0,0){$\bullet$}}
\put(60,8){\makebox(0,0){$\bullet$}}
\put(50,9){\line(1,0){10}}
\put(50,7){\line(1,0){10}}
\end{picture}$$
of\/ ${\mathfrak{sp}}(2(n+1),{\mathbb{R}})$.
\end{theorem}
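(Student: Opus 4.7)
The plan is to verify the hypotheses of Theorem~\ref{finalcontacttheorem} and then compute the rank of the bundle~$\mathbb{T}$ appearing there, using the algebraic identifications provided by Propositions~\ref{algebraicconsequences} and~\ref{alg_con_three}. Throughout, let $\mathbb{V}$ denote the irreducible representation of $\mathfrak{g}=\mathfrak{sp}(2(n+1),\mathbb{R})$ depicted in the conclusion of the theorem (with the extra node of label $k-1$ adjoined), and write $\mathbb{V}=\mathbb{V}_0\oplus\cdots\oplus\mathbb{V}_N$ for its decomposition under the grading element~(\ref{ge}).

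The first step is to identify the fibre $\mathbb{K}_H$ of the symbol kernel~$K_H$. Because the highest weight of $\bigodot^k\!\Lambda_H^1\circledcirc E$ is precisely the sum of highest weights, $F$ corresponds to the irreducible representation $\mathbb{F}$ of~(\ref{Hone}), and the enhanced symbol~$\pi_H$ is fibrewise the natural surjection $\mathbb{S}_\perp^k\otimes\mathbb{E}\to\mathbb{F}$ --- the very projection whose kernel is denoted $\mathbb{K}_H$ in Propositions~\ref{algebraicconsequences} and~\ref{alg_con_three}. Invoking those propositions yields, for each $\ell\geq 1$, the fibrewise identification
$$(S_\perp^\ell\otimes K_H)\cap(S_\perp^{k+\ell}\otimes E)\;\cong\;(\mathbb{S}_\perp^\ell\otimes\mathbb{K}_H)\cap(\mathbb{S}_\perp^{k+\ell}\otimes\mathbb{E})=\mathbb{V}_{k+\ell}.$$
Since each $\mathbb{V}_j$ is a well-defined $\mathrm{Sp}(2n,\mathbb{R})$-module, these intersections have constant rank and so furnish vector bundles $K_H^\ell$, which is the regularity hypothesis of Theorem~\ref{finalcontacttheorem}.

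Finite-dimensionality of $\mathbb{V}$ forces the grading to terminate (at $N=2(k-1+a+b+c+\cdots+d+e)$), so $\mathbb{V}_j=0$ for $j>N$ and consequently $K_H^\ell=0$ for $\ell>N-k$: the operator $D$ is therefore of contact finite-type. Theorem~\ref{finalcontacttheorem} now applies and produces a partial connection on
$$\textstyle\mathbb{T}=J_H^{k-1}E\oplus K_H\oplus\bigoplus_{\ell\geq 1}K_H^\ell$$
whose parallel sections are in bijection with solutions of $D\sigma=0$, bounding the dimension of the solution space by $\rank\mathbb{T}$.

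It remains to compute this rank directly from the algebraic grading. The weighted jet exact sequences~(\ref{weightedjets}) give $\rank J_H^{k-1}E=\sum_{j=0}^{k-1}\dim(\mathbb{S}_\perp^j\otimes\mathbb{E})=\sum_{j=0}^{k-1}\dim\mathbb{V}_j$, using $\mathbb{V}_0=\mathbb{E}$ and $\mathbb{V}_j=\mathbb{S}_\perp^j\otimes\mathbb{E}$ for $j<k$ from the propositions. Adding $\dim K_H=\dim\mathbb{V}_k$ and $\dim K_H^\ell=\dim\mathbb{V}_{k+\ell}$ for the higher prolongations, we obtain $\rank\mathbb{T}=\sum_{j=0}^N\dim\mathbb{V}_j=\dim\mathbb{V}$, as claimed. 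The main obstacle will be cleanly matching the algebraic identification of $\mathbb{F}$ with $\bigodot^k\!\mathbb{S}_\perp\circledcirc\mathbb{E}$, and of the associated kernel $\mathbb{K}_H$, with the geometric projection used to define~$\pi_H$; once this correspondence is pinned down, Propositions~\ref{algebraicconsequences} and~\ref{alg_con_three} do all the real work and the remainder of the argument is bookkeeping of the graded pieces supplied by Kostant's theorem.
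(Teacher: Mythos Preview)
Your proposal is correct and follows exactly the approach the paper has set up: apply Theorem~\ref{finalcontacttheorem} after using Propositions~\ref{algebraicconsequences} and~\ref{alg_con_three} to identify $K_H^\ell$ with $\mathbb{V}_{k+\ell}$, then sum the graded pieces to obtain $\rank\mathbb{T}=\dim\mathbb{V}$. The paper does not write out a separate proof for this theorem, treating it as an immediate geometric consequence of those propositions together with the general prolongation machinery, and your argument is precisely the unpacking of that implication.
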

\begin{remark}
On a symplectic manifold of dimension $2n$ the structure group of the full
co-frame bundle is reduced to ${\mathrm{Sp}}(2n,{\mathbb{R}})$. Hence, one can
similarly define sympletic vector bundles on symplectic manifolds as those
induced by the finite-dimensional representations
of~${\mathrm{Sp}}(2n,{\mathbb{R}})$. Suppose $E$ is such a bundle, induced
by the irreducible representation
$${\mathbb{E}}=\begin{picture}(65,20)(0,5)
\put(2,13){\makebox(0,0)[b]{$\scriptstyle a$}}
\put(12,13){\makebox(0,0)[b]{$\scriptstyle b$}}
\put(22,13){\makebox(0,0)[b]{$\scriptstyle c$}}
\put(50,13){\makebox(0,0)[b]{$\scriptstyle d$}}
\put(60,13){\makebox(0,0)[b]{$\scriptstyle e$}}
\put(2,8){\makebox(0,0){$\bullet$}}
\put(12,8){\makebox(0,0){$\bullet$}}
\put(22,8){\makebox(0,0){$\bullet$}}
\put(2,8){\line(1,0){25}}
\put(50,8){\line(-1,0){5}}
\put(37,8){\makebox(0,0){$\cdots$}}
\put(55,8){\makebox(0,0){$\scriptstyle\langle$}}
\put(50,8){\makebox(0,0){$\bullet$}}
\put(60,8){\makebox(0,0){$\bullet$}}
\put(50,9){\line(1,0){10}}
\put(50,7){\line(1,0){10}}
\end{picture}\quad\mbox{of}\enskip{\mathrm{Sp}}(2n,{\mathbb{R}}).$$
Let $F=\bigodot^k\!\Lambda^1\circledcirc E$, in other words $F$ is induced by
the representation
\begin{picture}(75,20)(-10,5)
\put(-3,13){\makebox(0,0)[b]{$\scriptstyle a+k$}}
\put(12,13){\makebox(0,0)[b]{$\scriptstyle b$}}
\put(22,13){\makebox(0,0)[b]{$\scriptstyle c$}}
\put(50,13){\makebox(0,0)[b]{$\scriptstyle d$}}
\put(60,13){\makebox(0,0)[b]{$\scriptstyle e$}}
\put(-3,8){\makebox(0,0){$\bullet$}}
\put(12,8){\makebox(0,0){$\bullet$}}
\put(22,8){\makebox(0,0){$\bullet$}}
\put(-3,8){\line(1,0){30}}
\put(50,8){\line(-1,0){5}}
\put(37,8){\makebox(0,0){$\cdots$}}
\put(55,8){\makebox(0,0){$\scriptstyle\langle$}}
\put(50,8){\makebox(0,0){$\bullet$}}
\put(60,8){\makebox(0,0){$\bullet$}}
\put(50,9){\line(1,0){10}}
\put(50,7){\line(1,0){10}}
\end{picture}, which we shall denote by~${\mathbb{F}}$.
Suppose $D:E\to F$ is a $k^{\mathrm{th}}$ order linear differential operator
whose symbol is the composition
$$\textstyle\bigodot^k\!\Lambda^1\otimes E
\xrightarrow{\,\circledcirc\,}\bigodot^k\!\Lambda^1\circledcirc E=F.$$
Then the hypotheses of Theorem~\ref{improvedspencer} apply owing to the
following observations.
\begin{itemize}
\item $\Lambda^1$ is induced by ${\mathbb{S}}_\perp$, the defining
representation of ${\mathrm{Sp}}(2n,{\mathbb{R}})$
\item $K$ is induced by ${\mathbb{K}}\equiv\ker\circledcirc:
\bigodot^k{\mathbb{S}}_\perp
\otimes{\mathbb{E}}\to{\mathbb{F}}$
\item $K^\ell$ is induced by ${\mathbb{K}}^\ell\equiv
(\bigodot^\ell{\mathbb{S}}_\perp\otimes{\mathbb{K}})\cap
(\bigodot^{k+\ell}{\mathbb{S}}_\perp\otimes{\mathbb{E}})$
\item ${\mathbb{K}}^\ell\subseteq
({\mathbb{S}}_\perp^\ell\otimes{\mathbb{K}}_H)\cap
({\mathbb{S}}_\perp^{k+\ell}\otimes{\mathbb{E}})\;\forall\ell\geq 0$, where
${\mathbb{K}}_H\equiv\ker\circledcirc:{\mathbb{S}}_\perp^k\otimes{\mathbb{E}}
\to{\mathbb{F}}.$
\end{itemize}
In other words, the consequences of Kostant's Theorem detailed in
Propositions~\ref{algebraicconsequences} and ~\ref{alg_con_three} are clearly
stronger than needed (for $D$ to be finite-type in the sense of
Theorem~\ref{improvedspencer}) by dint of~(\ref{alg_counterpart}). In
particular, we obtain a bound on the dimension of the solution space for~$D$.
Presumably, this bound is not at all sharp.
\end{remark}

It is also possible to adapt the theory to deal with contact manifolds endowed
with certain additional structures following a similar set of variations
concerning general prolongations. In the general case, the main examples are
affine manifolds and Riemannian manifolds. However, as detailed in~\cite{bceg},
the theory also applies to geometries derived from any $|1|$-graded simple Lie
algebra. For contact manifolds, the corresponding results are as follows.

Recall the decomposition (\ref{Cdeomposition}) of ${\mathfrak{sp}}(2(n+1))$:--
$${\mathfrak{sp}}(2(n+1))=
\underbrace{{\mathfrak{g}}_{-2}\oplus{\mathfrak{g}}_{-1}}_{
\makebox[50pt][r]{\footnotesize Heisenberg algebra
of dimension $2n+1$}{}}
\oplus\:
\underbrace{{\mathbb{R}}\oplus{\mathfrak{sp}}(2n)}_{
\mbox{${\mathfrak{g}}_0$}}\oplus\;{\mathfrak{g}}_{1}
\oplus{\mathfrak{g}}_{2}$$
These salient features pertain for any simple Lie algebra other than
${\mathfrak{sl}}(2)$.
For the remaining classical Lie algebras
$$\begin{array}l{\mathfrak{sl}}(n+2)=
\underbrace{{\mathfrak{g}}_{-2}\oplus{\mathfrak{g}}_{-1}}_{
\makebox[50pt][r]{\footnotesize Heisenberg algebra
of dimension $2n+1\rule[-10pt]{0pt}{10pt}$}{}}
\oplus\:
\underbrace{{\mathbb{R}}\oplus{\mathbb{R}}\oplus{\mathfrak{sl}}(n)}_{
\mbox{${\mathfrak{g}}_0$}}\oplus\;{\mathfrak{g}}_{1}
\oplus{\mathfrak{g}}_{2}\\
{\mathfrak{so}}(n+4)=
\underbrace{{\mathfrak{g}}_{-2}\oplus{\mathfrak{g}}_{-1}}_{
\makebox[50pt][r]{\footnotesize Heisenberg algebra
of dimension $2n+1$}{}}
\oplus\:
\underbrace{{\mathbb{R}}\oplus{\mathfrak{sl}}(2)\oplus{\mathfrak{so}}(n)}_{
\mbox{${\mathfrak{g}}_0$}}\oplus\;{\mathfrak{g}}_{1}
\oplus{\mathfrak{g}}_{2}\end{array}$$
and for the exceptional Lie algebras:--
$$\begin{array}lE_6=
\underbrace{{\mathfrak{g}}_{-2}\oplus{\mathfrak{g}}_{-1}}_{
\makebox[50pt][r]{\footnotesize Heisenberg algebra
of dimension $21\rule[-10pt]{0pt}{10pt}$}{}}
\oplus\:
\underbrace{{\mathbb{R}}\oplus{\mathfrak{sl}}(6)}_{
\mbox{${\mathfrak{g}}_0$}}\oplus\;{\mathfrak{g}}_{1}
\oplus{\mathfrak{g}}_{2}\\
E_7=
\underbrace{{\mathfrak{g}}_{-2}\oplus{\mathfrak{g}}_{-1}}_{
\makebox[50pt][r]{\footnotesize Heisenberg algebra
of dimension $33\rule[-10pt]{0pt}{10pt}$}{}}
\oplus\:
\underbrace{{\mathbb{R}}\oplus{\mathfrak{so}}(12)}_{
\mbox{${\mathfrak{g}}_0$}}\oplus\;{\mathfrak{g}}_{1}
\oplus{\mathfrak{g}}_{2}\\
E_8=
\underbrace{{\mathfrak{g}}_{-2}\oplus{\mathfrak{g}}_{-1}}_{
\makebox[50pt][r]{\footnotesize Heisenberg algebra
of dimension $57\rule[-10pt]{0pt}{10pt}$}{}}
\oplus\:
\underbrace{{\mathbb{R}}\oplus E_7}_{
\mbox{${\mathfrak{g}}_0$}}\oplus\;{\mathfrak{g}}_{1}
\oplus{\mathfrak{g}}_{2}\\
F_4=
\underbrace{{\mathfrak{g}}_{-2}\oplus{\mathfrak{g}}_{-1}}_{
\makebox[50pt][r]{\footnotesize Heisenberg algebra
of dimension $15\rule[-10pt]{0pt}{10pt}$}{}}
\oplus\:
\underbrace{{\mathbb{R}}\oplus{\mathfrak{sp}}(6)}_{
\mbox{${\mathfrak{g}}_0$}}\oplus\;{\mathfrak{g}}_{1}
\oplus{\mathfrak{g}}_{2}\\
G_2=
\underbrace{{\mathfrak{g}}_{-2}\oplus{\mathfrak{g}}_{-1}}_{
\makebox[50pt][r]{\footnotesize Heisenberg algebra
of dimension $5$}{}}
\oplus\:
\underbrace{{\mathbb{R}}\oplus{\mathfrak{sl}}(2)}_{
\mbox{${\mathfrak{g}}_0$}}\oplus\;{\mathfrak{g}}_{1}
\oplus{\mathfrak{g}}_{2}.\end{array}$$
In each case, the adjoint action of ${\mathfrak{g}}_0$ on ${\mathfrak{g}}_{-1}$
is compatible with the Lie bracket having values in the $1$-dimensional
${\mathfrak{g}}_{-2}$. We obtain a subalgebra of the symplectic Lie
algebra and corresponding subgroups by exponentiation:--
$$\begin{array}{lll}{\mathrm{Sp}}(2n,{\mathbb{R}})
\stackrel{\underbar{\enskip}\;\;}{\to}
{\mathrm{Sp}}(2n,{\mathbb{R}})&
{\mathrm{SL}}(n,{\mathbb{R}})\hookrightarrow{\mathrm{Sp}}(2n,{\mathbb{R}})&
{\mathrm{SL}}(2,{\mathbb{R}})\times{\mathrm{SO}}(n)\hookrightarrow
{\mathrm{Sp}}(2n,{\mathbb{R}})\\[4pt]
{\mathrm{SL}}(6,{\mathbb{R}})\hookrightarrow{\mathrm{Sp}}(20,{\mathbb{R}})&
{\mathrm{Spin}}(12)\hookrightarrow{\mathrm{Sp}}(32,{\mathbb{R}})&
{\mathrm{E}}_7\hookrightarrow{\mathrm{Sp}}(56,{\mathbb{R}})\\[4pt]
{\mathrm{Sp}}(6,{\mathbb{R}})\hookrightarrow{\mathrm{Sp}}(14,{\mathbb{R}})&
{\mathrm{SL}}(2,{\mathbb{R}})\hookrightarrow{\mathrm{Sp}}(4,{\mathbb{R}}).
\end{array}$$
In most cases, it is straightforward to describe these embeddings by explicit
formul{\ae}. For these purposes, let us realise
$${\mathrm{Sp}}(2n,{\mathbb{R}})=
\left\{S\in{\mathrm{SL}}(2n,{\mathbb{R}})\mbox{ s.t.\ }SJS^t=J\right\},
\quad\mbox{where }J\equiv
\mbox{\small$\left[\!\!\begin{array}{cc}0&{\mathrm{Id}}\\
-{\mathrm{Id}}&0\end{array}\!\right]$}.$$
Then, for example,
$$\begin{array}l
{\mathrm{SL}}(n,{\mathbb{R}})\ni M\mapsto
\mbox{\small$\left[\!\begin{array}{cc}M&0\\0&(M^t)^{-1}\end{array}\!\right]$}
\in{\mathrm{Sp}}(2n,{\mathbb{R}})
\refstepcounter{equation}\qquad\qquad(\theequation)\label{realCR}\\
{\mathrm{SL}}(2,{\mathbb{R}})\times{\mathrm{SO}}(n)\ni\left(
\mbox{\small$\left[\!\begin{array}{cc}a&b\\c&d\end{array}\!\right]$},
M\right)\mapsto
\mbox{\small$\left[\!\begin{array}{cc}aM&bM\\cM&dM\end{array}\!\right]$}
\in{\mathrm{Sp}}(2n,{\mathbb{R}})\\
{\mathrm{SL}}(2,{\mathbb{R}})\ni
\mbox{\small$\left[\!\begin{array}{cc}a&b\\c&d\end{array}\!\right]$}\mapsto
\mbox{\small$\left[\!\begin{array}{cccc}
a^3&\sqrt{3}a^2b&-b^3&\sqrt{3}ab^2\\
\sqrt{3}a^2c&2abc+a^2d&-\sqrt{3}b^2d&2abd+b^2c\\
-c^3&-\sqrt{3}c^2d&d^3&-\sqrt{3}cd^2\\
\sqrt{3}ac^2&2acd+bc^2&-\sqrt{3}bd^2&2bcd+ad^2
\end{array}\!\right]$}\in{\mathrm{Sp}}(4,{\mathbb{R}}).\end{array}$$
There are alternative real forms of some of these embeddings. For example,
instead of
${\mathrm{SL}}(n,{\mathbb{R}})\hookrightarrow{\mathrm{Sp}}(2n,{\mathbb{R}})$ we
may consider the standard embedding
${\mathrm{SU}}(n)\hookrightarrow{\mathrm{Sp}}(2n,{\mathbb{R}})$ obtained by
regarding $J$ as a complex structure.

For each of these subgroups $G\hookrightarrow{\mathrm{Sp}}(2n,{\mathbb{R}})$ we
may define an additional structure on a contact manifold of dimension $2n+1$ by
reducing the structure group on the contact distribution to~$G$. Often there is
a simple geometric interpretation of such an additional reduction. For example,
it is clear from the embedding (\ref{realCR}) that such a reduction corresponds
to choosing a pair of transverse Lagrangian subdistributions of the contact
distribution. A contact manifold equipped with a reduction of structure group
to ${\mathrm{SU}}(n)\subset{\mathrm{Sp}}(2n,{\mathbb{R}})$ is the same as an
almost CR-structure together with a choice of contact form, i.e.\ an almost
pseudo-Hermitian structure. There are many natural differential operators
compatible with these various structures. Here are some simple examples.

\begin{example} For any partial connection $\nabla_H$ on $\Lambda_H^1$ on a
contact manifold, consider the differential operator $D$ defined as the
composition
$$\textstyle\bigodot^k\!\Lambda_H^1\xrightarrow{\,\nabla_H\,}
\Lambda_H^1\otimes\bigodot^k\!\Lambda_H^1\xrightarrow{\,\odot\,}
\bigodot^{k+1}\!\Lambda_H^1.$$
Using abstract indices as in \S\ref{three},
$$\sigma_{bc\cdots d}\stackrel{D}{\longmapsto}
\nabla_{(a}\sigma_{bc\cdots d)}.$$
This is an operator between symplectic bundles whose symbol in dimension $2n+1$
is induced by the homomorphism of ${\mathrm{Sp}}(2n,{\mathbb{R}})$-modules
$$\begin{picture}(65,20)(0,5)
\put(2,13){\makebox(0,0)[b]{$\scriptstyle 1$}}
\put(12,13){\makebox(0,0)[b]{$\scriptstyle 0$}}
\put(22,13){\makebox(0,0)[b]{$\scriptstyle 0$}}
\put(50,13){\makebox(0,0)[b]{$\scriptstyle 0$}}
\put(60,13){\makebox(0,0)[b]{$\scriptstyle 0$}}
\put(2,8){\makebox(0,0){$\bullet$}}
\put(12,8){\makebox(0,0){$\bullet$}}
\put(22,8){\makebox(0,0){$\bullet$}}
\put(2,8){\line(1,0){25}}
\put(50,8){\line(-1,0){5}}
\put(37,8){\makebox(0,0){$\cdots$}}
\put(55,8){\makebox(0,0){$\scriptstyle\langle$}}
\put(50,8){\makebox(0,0){$\bullet$}}
\put(60,8){\makebox(0,0){$\bullet$}}
\put(50,9){\line(1,0){10}}
\put(50,7){\line(1,0){10}}
\end{picture}\otimes
\begin{picture}(65,20)(0,5)
\put(2,13){\makebox(0,0)[b]{$\scriptstyle k$}}
\put(12,13){\makebox(0,0)[b]{$\scriptstyle 0$}}
\put(22,13){\makebox(0,0)[b]{$\scriptstyle 0$}}
\put(50,13){\makebox(0,0)[b]{$\scriptstyle 0$}}
\put(60,13){\makebox(0,0)[b]{$\scriptstyle 0$}}
\put(2,8){\makebox(0,0){$\bullet$}}
\put(12,8){\makebox(0,0){$\bullet$}}
\put(22,8){\makebox(0,0){$\bullet$}}
\put(2,8){\line(1,0){25}}
\put(50,8){\line(-1,0){5}}
\put(37,8){\makebox(0,0){$\cdots$}}
\put(55,8){\makebox(0,0){$\scriptstyle\langle$}}
\put(50,8){\makebox(0,0){$\bullet$}}
\put(60,8){\makebox(0,0){$\bullet$}}
\put(50,9){\line(1,0){10}}
\put(50,7){\line(1,0){10}}
\end{picture}\longrightarrow
\begin{picture}(75,20)(-10,5)
\put(-3,13){\makebox(0,0)[b]{$\scriptstyle k+1$}}
\put(12,13){\makebox(0,0)[b]{$\scriptstyle 0$}}
\put(22,13){\makebox(0,0)[b]{$\scriptstyle 0$}}
\put(50,13){\makebox(0,0)[b]{$\scriptstyle 0$}}
\put(60,13){\makebox(0,0)[b]{$\scriptstyle 0$}}
\put(-3,8){\makebox(0,0){$\bullet$}}
\put(12,8){\makebox(0,0){$\bullet$}}
\put(22,8){\makebox(0,0){$\bullet$}}
\put(-3,8){\line(1,0){30}}
\put(50,8){\line(-1,0){5}}
\put(37,8){\makebox(0,0){$\cdots$}}
\put(55,8){\makebox(0,0){$\scriptstyle\langle$}}
\put(50,8){\makebox(0,0){$\bullet$}}
\put(60,8){\makebox(0,0){$\bullet$}}
\put(50,9){\line(1,0){10}}
\put(50,7){\line(1,0){10}}
\end{picture}.
$$
Theorem~\ref{plainBGGbound} applies and we conclude that
$$\dim\{D\sigma=0\}\leq\dim\:
\begin{picture}(75,20)(0,5)
\put(2,13){\makebox(0,0)[b]{$\scriptstyle 0$}}
\put(12,13){\makebox(0,0)[b]{$\scriptstyle k$}}
\put(22,13){\makebox(0,0)[b]{$\scriptstyle 0$}}
\put(32,13){\makebox(0,0)[b]{$\scriptstyle 0$}}
\put(60,13){\makebox(0,0)[b]{$\scriptstyle 0$}}
\put(70,13){\makebox(0,0)[b]{$\scriptstyle 0$}}
\put(2,8){\makebox(0,0){$\bullet$}}
\put(12,8){\makebox(0,0){$\bullet$}}
\put(22,8){\makebox(0,0){$\bullet$}}
\put(32,8){\makebox(0,0){$\bullet$}}
\put(2,8){\line(1,0){35}}
\put(60,8){\line(-1,0){5}}
\put(47,8){\makebox(0,0){$\cdots$}}
\put(65,8){\makebox(0,0){$\scriptstyle\langle$}}
\put(60,8){\makebox(0,0){$\bullet$}}
\put(70,8){\makebox(0,0){$\bullet$}}
\put(60,9){\line(1,0){10}}
\put(60,7){\line(1,0){10}}
\end{picture}=\frac{(2n+k)!(2n+k-1)!(2n+2k+1)}{(2n-1)!(2n+1)!k!(k+1)!}.
$$
The case $n=1$ gives the series of bounds presented at the end
of~\S\ref{contact}.
\end{example}
\begin{example}
Suppose that $\nabla_H$ is a partial connection
on $\Lambda_H^1$ and consider the differential operator
$$\sigma_{cde}\stackrel{D}{\longmapsto}
\nabla_{(a}\nabla_b\sigma_{cde)}$$
for $\sigma_{cde}$ symmetric. It is a differential operator
$$D:\begin{picture}(65,20)(0,5)
\put(2,13){\makebox(0,0)[b]{$\scriptstyle 3$}}
\put(12,13){\makebox(0,0)[b]{$\scriptstyle 0$}}
\put(22,13){\makebox(0,0)[b]{$\scriptstyle 0$}}
\put(50,13){\makebox(0,0)[b]{$\scriptstyle 0$}}
\put(60,13){\makebox(0,0)[b]{$\scriptstyle 0$}}
\put(2,8){\makebox(0,0){$\bullet$}}
\put(12,8){\makebox(0,0){$\bullet$}}
\put(22,8){\makebox(0,0){$\bullet$}}
\put(2,8){\line(1,0){25}}
\put(50,8){\line(-1,0){5}}
\put(37,8){\makebox(0,0){$\cdots$}}
\put(55,8){\makebox(0,0){$\scriptstyle\langle$}}
\put(50,8){\makebox(0,0){$\bullet$}}
\put(60,8){\makebox(0,0){$\bullet$}}
\put(50,9){\line(1,0){10}}
\put(50,7){\line(1,0){10}}
\end{picture}\to\begin{picture}(65,20)(0,5)
\put(2,13){\makebox(0,0)[b]{$\scriptstyle 5$}}
\put(12,13){\makebox(0,0)[b]{$\scriptstyle 0$}}
\put(22,13){\makebox(0,0)[b]{$\scriptstyle 0$}}
\put(50,13){\makebox(0,0)[b]{$\scriptstyle 0$}}
\put(60,13){\makebox(0,0)[b]{$\scriptstyle 0$}}
\put(2,8){\makebox(0,0){$\bullet$}}
\put(12,8){\makebox(0,0){$\bullet$}}
\put(22,8){\makebox(0,0){$\bullet$}}
\put(2,8){\line(1,0){25}}
\put(50,8){\line(-1,0){5}}
\put(37,8){\makebox(0,0){$\cdots$}}
\put(55,8){\makebox(0,0){$\scriptstyle\langle$}}
\put(50,8){\makebox(0,0){$\bullet$}}
\put(60,8){\makebox(0,0){$\bullet$}}
\put(50,9){\line(1,0){10}}
\put(50,7){\line(1,0){10}}
\end{picture}$$
to which Theorem~\ref{plainBGGbound} applies, whence
$$\dim\{D\sigma=0\}\leq\dim:
\begin{picture}(75,20)(0,5)
\put(2,13){\makebox(0,0)[b]{$\scriptstyle 1$}}
\put(12,13){\makebox(0,0)[b]{$\scriptstyle 3$}}
\put(22,13){\makebox(0,0)[b]{$\scriptstyle 0$}}
\put(32,13){\makebox(0,0)[b]{$\scriptstyle 0$}}
\put(60,13){\makebox(0,0)[b]{$\scriptstyle 0$}}
\put(70,13){\makebox(0,0)[b]{$\scriptstyle 0$}}
\put(2,8){\makebox(0,0){$\bullet$}}
\put(12,8){\makebox(0,0){$\bullet$}}
\put(22,8){\makebox(0,0){$\bullet$}}
\put(32,8){\makebox(0,0){$\bullet$}}
\put(2,8){\line(1,0){35}}
\put(60,8){\line(-1,0){5}}
\put(47,8){\makebox(0,0){$\cdots$}}
\put(65,8){\makebox(0,0){$\scriptstyle\langle$}}
\put(60,8){\makebox(0,0){$\bullet$}}
\put(70,8){\makebox(0,0){$\bullet$}}
\put(60,9){\line(1,0){10}}
\put(60,7){\line(1,0){10}}
\end{picture}=\frac{4n(n+1)^2(n+2)(n+4)(2n+1)(2n+3)}{45}.$$
\end{example}
\begin{example}
Suppose that $\nabla_H$ is a partial connection
on $\Lambda_H^1$ and consider the differential operator
$$\sigma_{bc}\stackrel{D}{\longmapsto}
\nabla_{(a}\sigma_{b)c}+
\mbox{\large$\frac{1}{2n+1}$}\left(L^{de}\nabla_d\sigma_{e(a}\right)L_{b)c}+
\mbox{\large$\frac{1}{2n+1}$}L_{c(a}L^{de}\nabla_{b)}\sigma_{de}$$
for $\sigma_{bc}$ skew and trace-free with respect to the Levi form, i.e.\ a
section of $\Lambda_{H\perp}^2$ (we must suppose that $n\geq 2$). It is
designed as an operator
$$D:\begin{picture}(65,20)(0,5)
\put(2,13){\makebox(0,0)[b]{$\scriptstyle 0$}}
\put(12,13){\makebox(0,0)[b]{$\scriptstyle 1$}}
\put(22,13){\makebox(0,0)[b]{$\scriptstyle 0$}}
\put(50,13){\makebox(0,0)[b]{$\scriptstyle 0$}}
\put(60,13){\makebox(0,0)[b]{$\scriptstyle 0$}}
\put(2,8){\makebox(0,0){$\bullet$}}
\put(12,8){\makebox(0,0){$\bullet$}}
\put(22,8){\makebox(0,0){$\bullet$}}
\put(2,8){\line(1,0){25}}
\put(50,8){\line(-1,0){5}}
\put(37,8){\makebox(0,0){$\cdots$}}
\put(55,8){\makebox(0,0){$\scriptstyle\langle$}}
\put(50,8){\makebox(0,0){$\bullet$}}
\put(60,8){\makebox(0,0){$\bullet$}}
\put(50,9){\line(1,0){10}}
\put(50,7){\line(1,0){10}}
\end{picture}\to\begin{picture}(65,20)(0,5)
\put(2,13){\makebox(0,0)[b]{$\scriptstyle 1$}}
\put(12,13){\makebox(0,0)[b]{$\scriptstyle 1$}}
\put(22,13){\makebox(0,0)[b]{$\scriptstyle 0$}}
\put(50,13){\makebox(0,0)[b]{$\scriptstyle 0$}}
\put(60,13){\makebox(0,0)[b]{$\scriptstyle 0$}}
\put(2,8){\makebox(0,0){$\bullet$}}
\put(12,8){\makebox(0,0){$\bullet$}}
\put(22,8){\makebox(0,0){$\bullet$}}
\put(2,8){\line(1,0){25}}
\put(50,8){\line(-1,0){5}}
\put(37,8){\makebox(0,0){$\cdots$}}
\put(55,8){\makebox(0,0){$\scriptstyle\langle$}}
\put(50,8){\makebox(0,0){$\bullet$}}
\put(60,8){\makebox(0,0){$\bullet$}}
\put(50,9){\line(1,0){10}}
\put(50,7){\line(1,0){10}}
\end{picture}$$
to which Theorem~\ref{plainBGGbound} applies, whence
$$\dim\{D\sigma=0\}\leq\dim:
\begin{picture}(75,20)(0,5)
\put(2,13){\makebox(0,0)[b]{$\scriptstyle 0$}}
\put(12,13){\makebox(0,0)[b]{$\scriptstyle 0$}}
\put(22,13){\makebox(0,0)[b]{$\scriptstyle 1$}}
\put(32,13){\makebox(0,0)[b]{$\scriptstyle 0$}}
\put(60,13){\makebox(0,0)[b]{$\scriptstyle 0$}}
\put(70,13){\makebox(0,0)[b]{$\scriptstyle 0$}}
\put(2,8){\makebox(0,0){$\bullet$}}
\put(12,8){\makebox(0,0){$\bullet$}}
\put(22,8){\makebox(0,0){$\bullet$}}
\put(32,8){\makebox(0,0){$\bullet$}}
\put(2,8){\line(1,0){35}}
\put(60,8){\line(-1,0){5}}
\put(47,8){\makebox(0,0){$\cdots$}}
\put(65,8){\makebox(0,0){$\scriptstyle\langle$}}
\put(60,8){\makebox(0,0){$\bullet$}}
\put(70,8){\makebox(0,0){$\bullet$}}
\put(60,9){\line(1,0){10}}
\put(60,7){\line(1,0){10}}
\end{picture}=\frac{2(n-1)(n+1)(2n+3)}{3}.$$
\end{example}

\begin{example} In~\cite{hos}, the authors consider a second order differential
operator compatible with a contact Lagrangian structure~\cite[\S4.2.3]{thebook}
in $3$-dimensions whose leading terms in Darboux local co\"ordinates are
$$f\mapsto(X^2f,Y^2f).$$
By taking ${\mathbb{E}}=\begin{picture}(15,20)(0,5)
\put(2,13){\makebox(0,0)[b]{$\scriptstyle 1$}}
\put(12,13){\makebox(0,0)[b]{$\scriptstyle 1$}}
\put(2,8){\makebox(0,0){$\bullet$}}
\put(12,8){\makebox(0,0){$\bullet$}}
\put(2,8){\line(1,0){10}}
\end{picture}$ in the contact Lagrangian counterpart of 
Theorem~\ref{plainBGGbound}, it follows that the solutions space has dimension
bounded by~$8$. This dimension bound agrees with \cite[Theorem~3.1]{hos}, 
which the authors establish by an ad hoc prolongation.
\end{example}

\begin{remark}
There is a close parallel between the methods used in this article and the
methods of {\em parabolic geometry\/} as described in~\cite{thebook}. These
methods are informally and collectively known as the {\em
Bernstein-Gelfand-Gelfand\/} machinery and Kostant's computation \cite{kostant}
of Lie algebra cohomologies is a key ingredient in this machinery. The {\em
homogeneous models\/} and their first {\em Bernstein-Gelfand-Gelfand
operators\/} show that the dimension bounds in Theorem~\ref{plainBGGbound} and
its parabolic variants are sharp. The prolongations of \cite{bceg} compare to
$|1|$-graded parabolic geometries \cite[\S4.1]{thebook} in an entirely
analogous fashion.
\end{remark}

\end{document}